\documentclass[10pt,twoside]{article}
\usepackage[utf8]{inputenc}
\usepackage[english]{babel}
\usepackage{fancyhdr}
\usepackage{amssymb,amsmath,amsthm}
\usepackage{blindtext}
\usepackage{hyperref}
\usepackage{caption}
\usepackage[square,numbers]{natbib}

\usepackage{lipsum}
\usepackage{amsfonts}
\usepackage{graphicx}
\usepackage{epstopdf}
\usepackage{algorithmic}
\usepackage{algorithm}
\usepackage{mathtools}
\usepackage[inline]{enumitem}
\usepackage{xcolor}
\usepackage{subfig}
\usepackage{placeins}
\usepackage{bigstrut}
\usepackage{dcolumn}

\def\R{\mathbb R}
\def\conv{\text{conv}}

\DeclarePairedDelimiter{\norm}{\lVert}{\rVert} % norm

\newcommand{\ORD}{\texttt{ORD}}
\newcommand{\DFSIMPL}{\texttt{DF-SIMPLEX}}
\newcommand{\LINCOA}{\texttt{LINCOA}}
\newcommand{\NOMAD}{\texttt{NOMAD}}
\newcommand{\PSWARM}{\texttt{PSWARM}}
\newcommand{\SDPEN}{\texttt{SDPEN}}

\newtheorem{definition}{Definition}
\newtheorem{lemma}{Lemma}
\newtheorem{proposition}{Proposition}
\newtheorem{theorem}{Theorem}
\newtheorem{corollary}{Corollary}
\newtheorem{remark}{Remark}

\textheight 601.40024pt % default =  541.40024pt (19.02533cm)
\textwidth 440pt % default = 360pt (12.65076cm)
\voffset = -30pt % default = 0.0pt (0 cm)
\setlength\oddsidemargin{\dimexpr(\paperwidth-\textwidth)/2 - 1in\relax}
\setlength\evensidemargin{\oddsidemargin}

\pagestyle{fancy}
\fancyhead{}

%-------------------------------%
% INSERT HERE THE RUNNING TITLE %
%-------------------------------%
\fancyhead[LE]{\small A derivative-free method for structured optimization problems}

%---------------------------------%
% INSERT HERE THE RUNNING AUTHORS %
%---------------------------------%
\fancyhead[RO]{\small A. Cristofari, F. Rinaldi}

\hypersetup{
    colorlinks=false,
    urlcolor=black,
    pdfborder={0 0 0}
}

\captionsetup[table]{skip=10pt} % space between caption and table

\bibliographystyle{abbrvnat}
\setlength{\bibsep}{0pt} % space between bibliography items

\begin{document}
\thispagestyle{plain}

\setcounter{page}{1}

{\centering
%-----------------------%
% INSERT HERE THE TITLE %
%-----------------------%
{\LARGE \bfseries A derivative-free method for structured optimization problems\par}

\bigskip\bigskip
%-------------------------%
% INSERT HERE THE AUTHORS %
%-------------------------%
Andrea Cristofari$^*$, Francesco Rinaldi$^*$
\bigskip

}

%----------------------------------%
% INSERT HERE AUTHORS' INFORMATION %
%----------------------------------%
\begin{center}
\small{\noindent$^*$Department of Mathematics ``Tullio Levi-Civita'' \\
University of Padua \\
Via Trieste, 63, 35121 Padua, Italy \\
E-mail: \texttt{andrea.cristofari@unipd.it}, \texttt{rinaldi@math.unipd.it} \\
}
\end{center}

\bigskip\par\bigskip\par
\noindent \textbf{Abstract.}
Structured optimization problems are  ubiquitous in fields like data science and engineering. The goal in  structured optimization is using a prescribed set of points, called atoms, to build up a solution that minimizes or maximizes a given function.  In the present paper, we want to minimize a black-box function over the convex hull of a given set of atoms, a problem that can be used to model a number of real-world applications. We focus on problems whose solutions are sparse, i.e., solutions that can be obtained as a proper convex combination of just a few atoms in the set, and propose a suitable derivative-free inner approximation approach that nicely exploits the structure of the given problem.
This enables us to properly handle the dimensionality issues usually connected with derivative-free algorithms,  thus getting a method that scales well in terms of both the dimension of the problem and the number of atoms.
We analyze global convergence to  stationary points. Moreover, we
show that, under suitable assumptions, the proposed algorithm identifies a specific subset of atoms with zero weight in the final solution after finitely many iterations.
Finally, we report numerical results showing the effectiveness of the proposed method.

\bigskip\par
\noindent \textbf{Keywords.} Derivative-free optimization. Decomposition methods. Large-scale optimization.

\bigskip\par
\noindent \textbf{MSC2000 subject classifications.} 90C06. 90C30. 90C56.

\section{Introduction}
In this paper, we consider an optimization problem of the type
\begin{equation}\label{prob}
\tag{$P0$}
\min_{x\in \mathcal{M}} f(x),
\end{equation}
where $\mathcal{M}$ is the convex hull of a finite set of points $\mathcal{A}=\{a_1,\ldots,a_m\} \subset \mathbb{R}^n$ called \emph{atoms}
(some of them might not be extreme points of $\mathcal{M}$)
and $f:\R^n \to \R$ is a continuously differentiable function.
We further assume that first-order information related to the objective function is unavailable or impractical to
obtain (e.g., functions are expensive to evaluate or somewhat noisy).
Since any point $x \in \mathcal{M}$ can be written as a convex combination of the atoms in $\mathcal{A}$,
Problem \eqref{prob} can be equivalently reformulated considering the
simplicial representation of the feasible set:
\begin{equation}\label{prob_sim0}
\tag{$P1$}
\min_{w\in \Delta_{m-1}} f(Aw),
\end{equation}
where $A=\begin{bmatrix}a_1 & \dots & a_m \end{bmatrix}\in \R^{n\times m}$ and $\Delta_{m-1}=\{w\in \R^m \colon e^T w=1,\ w\ge 0\}$,
with $e$ being the vector made of all ones.
Thus, each variable $w_i$ gives the weight of the $i$-th
atom in the convex combination.

We are particularly interested in instances of Problem \eqref{prob_sim0} that admit a sparse solution, i.e., instances whose solutions can be obtained as a proper convex combination of a small subset of atoms.

This  occurs, e.g., when $m \gg n$ (as a consequence of Carath\'eodory's theorem~\cite{caratheodory1907variabilitatsbereich}). We would like to notice that  this is not the only case that gives sparse solutions. We can have polytopes with ${\cal O}(n)$ vertices that, thanks to their structure, can induce sparsity anyway. A classic example is the $\ell_1$ ball~\cite{bach2011convex}.

This black-box structured optimization problem is somehow related to sparse \emph{atomic decomposition}  (see, e.g., \cite{chandrasekaran2012convex, fan2019polar} and references therein). In such a context the atomic structure
can be exploited when developing tailored  solvers for the problem.

There exists a significant number of real-world applications that fits our mathematical model. Interesting examples include, among others, black-box adversarial attacks on deep neural networks with $\ell_1$ or $\ell_\infty$ bounded perturbations (see, e.g., \cite{brendel2017decision,chen2017zoo,ilyas2018black} and references therein), and  reinforcement learning
(see, e.g., \cite{kaelbling1996reinforcement,ng2000pegasus} and references therein) with constrained policies.

In principle, Problem~\eqref{prob_sim0} can be tackled by any linearly constrained derivative-free optimization algorithm.
A large number of those methods %, which in principle might be considered  to tackle the reformulation given in~\eqref{prob_sim0},
are available in the literature. Nice overviews can be found in, e.g., \cite{audet2017derivative,conn:2009,kolda2003optimization,larson2019derivative}. An important class of methods is represented by direct-search schemes (see, e.g., \cite{kolda2003optimization} for further details). Those approaches explore the objective function along suitably chosen sets of directions that
somehow take into account the shape of the feasible region around the current iterate, and usually are
given by the positive generators of an approximate tangent cone related to nearby active constraints
\cite{kolda2007stationarity,lewis2000pattern}.
The chosen directions both guarantee feasibility and allow a decrease in the objective function value, when a sufficiently small stepsize is taken. Line search techniques can also be used to better explore the search directions \cite{lucidi2002objective}.
Moreover, conditions for the  active-set identification are described in~\cite{lewis2010active}.
% When dealing with linearly dependent constraints,
%  the computation of the positive generators is unfortunately hard and may require the use of enumerative techniques (see, e.g., \cite{abramson2008pattern,lewis2007implementing}), a drawback that might limit the practical use of those methods.

Another approach for the linearly constrained setting is proposed in~\cite{gratton2019direct}, where the authors introduce
the notions of deterministic and probabilistic feasible descent (they basically consider the projection of the negative gradient
on an approximate tangent cone identified by nearby active constraints). For the
deterministic case, a complexity bound for direct search (with sufficient
decrease) is given. They further prove global convergence with probability $1$ when using direct search based on probabilistic
feasible descent, and derive a complexity
bound with high probability.

% In order to handle linear equality constraints, we might use some specific approaches that include constraint removal by change of variables  \cite{audet2015linear} and exploration of the null space described by the constraints \cite{lewis2007implementing}. Those techniques might give a problem that is usually harder to solve (due to the reduced separability of the problem itself).

The use of global optimization strategies combined with direct-search approaches for linearly constrained problems has been investigated in \cite{diouane2015globally,vaz2007particle,vaz2009pswarm}.

Model-based approaches (see, e.g., \cite{audet2017derivative,conn:2009}) can also be used for solving linearly constrained derivative-free optimization problems. In~\cite{powell2015fast}, Powell described
trust-region methods for quadratic models
with linear constraints, which are used in the LINCOA software~\cite{lincoa}, developed by the same author for derivative-free linearly constrained optimization. Moreover, an extension of Powell's NEWUOA algorithm~\cite{powell2006newuoa, powell2008developments} to the linearly constrained case has been developed in~\cite{gumma2014derivative}.

Since the derivative-free strategies listed above do not exploit the peculiar structure of Problem \eqref{prob_sim0}, they might get stuck when the problem dimensions
increase.

Another way to deal with the original Problem \eqref{prob} is  by generating the facet-inducing halfspaces that describe the feasible set ${\cal M}$. In our case, the facet description could be obtained from the atom list by means of suitable facet enumeration strategies (see, e.g., \cite{avis1997good}). This might obviously help in case $m\gg n$ and the polytope has a specific structure. We anyway need to keep in mind that there exists a number of problems where using the facet description is not a viable option. A first example is when $\cal A$, is linear with respect to the problem dimension, but $\cal M$ does not have a polynomial description in terms of facet-inducing halfspaces. Another interesting example is given by problems where the inner description is not available and we only have an oracle that generates our atoms.
 Furthermore, since we consider instances whose solutions can be obtained using a very small number of atoms (usually much smaller than the dimension $n$), it would be better to exploit the vertex description when devising a new method.

We hence propose a new algorithmic scheme that tries
to take into account the features of the considered  problem, thus allowing us to solve large-scale instances. At each iteration, our approach performs three different steps:
\begin{enumerate}[label=(\roman*), leftmargin=*]
    \item it approximately solves a reduced problem whose feasible set is an inner description of $\cal M$ (given by the convex hull of a suitably chosen subset of atoms);
    \item it tries to refine the inner description of the feasible set by including new atoms;
    \item it tries to remove atoms by proper rules in order to keep the dimensions of the reduced problem small.
\end{enumerate}
More in detail, the approximate minimization of the reduced problem is
carried out by means of a tailored algorithm that combines the use of a specific set of sparse directions containing positive generators of the tangent cone at the
current iterate with a line search similar to those described in, e.g., \cite{lucidi2002derivative,lucidi2002global,lucidi2002objective}.
Furthermore, the addition/removal of new atoms guarantees an improvement of the objective function whenever we approximately solve the reduced problem.
Those key features enable us to prove the convergence of the method and, under suitable assumptions, the asymptotic finite identification of a specific subset of atoms with zero weight in the final solution.
This identification result has relevant implications on the computational side. The algorithm indeed keeps the reduced problem small enough along the iterations
when the final solution is sparse,
thus guaranteeing a significant objective function reduction even with a small budget of function evaluations.
%In the numerical result we report here, it is easy to see that the algorithm scales well with the problem dimension
%and is able to quickly find a good approximation of the face where a possible stationary point lies.
%This is pretty useful especially when dealing with instances whose solutions lie in a very small dimensional face of the polytope $\cal M$.

The proposed method is somehow related to inner approximation approaches (see, e.g., \cite{bertsekas2015convex} and references therein) for convex optimization problems.
Anyway, those methods cannot be directly applied to  the class of problems considered here due to the following reasons:
\begin{itemize}
    \item  they require assumptions on the objective functions that might be hard to verify in a DFO context;
    \item they normally use first/second order information to carry out the (approximate) minimization of the reduced problem and to select new atoms to be included in the inner description (see, e.g., \cite{hearn1987restricted, patriksson2013nonlinear}).
\end{itemize}
In our framework, we only require smoothness of the objective function and use zeroth order information (i.e., function evaluations)
to approximately minimize the reduced problem and to select a new atom.
To the best of our knowledge this is the first time that a complete theoretical and computational analysis of a derivative-free inner approximation approach is carried out.

% Let us introduce the notation used in the paper.
% Given a vector $x \in \R^n$, the $i$th component of $x$ is indicated by $x_i$.
% We denote by $e_i \in \R^n$ the $i$th vector of the canonical basis, i.e., the vector made of all zeros except for the $i$th component that is equal to $1$.
% The gradient of a function $f \colon \R^n \to \R$ is denoted by $\nabla f$ and $\nabla_i f$ denoted its $i$th component.
% The Euclidean norm of a vector $x \in \R^n$ is indicated by $\norm x$.

%-------------------------------------------------------------------------------
\section{A basic algorithm for minimization over the unit simplex}\label{sec:df_simplex}
In our framework, we need an inner solver for approximately minimize the objective function over a subset of atoms.
This motivates us to design a tailored approach for problems of the following form:
\begin{equation}\label{prob_sim}
\min_{y\in \Delta_{\bar m-1}} \varphi(y),
\end{equation}
where $\varphi \colon \R^{\bar m} \to \R$ is a continuously differentiable function.
The scheme of the method, that we named \DFSIMPL, is reported in Algorithm \ref{alg:basic}. It combines the use of a suitable set of sparse directions containing positive generators of the tangent cone at the
current iterate with a specific line search that guarantees feasibility.

We start by choosing a feasible point $y^0 \in \Delta_{\bar m-1}$ and some stepsizes $\hat \alpha^0_i,\ i=1,\dots, \bar m$
(note that we have a starting stepsize for each component $y_i$ of the solution).
At each iteration $k$, we select a variable index $j_k$ such that $y^k_{j_k}$ is ``sufficiently positive'' (see line 3 in Algorithm \ref{alg:basic}) and define the directions $d^k_i =\pm (e_i - e_{j_k})$, for all indices $i\neq j_k$, where with  $e_i \in \R^{\bar m}$ we denote from now on the $i$th vector of the canonical basis, i.e., the vector made of all zeros except for the $i$th component that is equal to $1$.
Search directions of this form are related to those used in the 2-coordinate descent method proposed in~\cite{cristofari2019almost}, with the difference that here, unlike in~\cite{cristofari2019almost}, first-order information is not available, and then, both $e_i - e_{j_k}$ and $e_{j_k}-e_i$ must be explored for all $i \ne j_k$.
Once these search directions are computed, for each of them we perform a line search to get a sufficient reduction in the objective function and we suitably update the values of the starting stepsizes $\hat \alpha^k_i$, $i=1,\ldots,\bar m$.
The line search procedure is reported in Algorithm~\ref{alg:ls}.
It is similar to those described in, e.g., \cite{lucidi2002derivative,lucidi2002global,lucidi2002objective}.
Notice that, in Algorithm~\ref{alg:ls}, we have $\bar \alpha = (z^k_i)_{j_k}$ at line 1 and $\bar \alpha = (z^k_i)_i$ at line 3.

\begin{algorithm}
\caption{\DFSIMPL}
\label{alg:basic}
\begin{algorithmic}
{\footnotesize
\par\vspace*{0.1cm}
\item[]\hspace*{-0.1truecm}$\,\,\,1$\hspace*{0.1truecm} Choose a point $y^0\in \Delta_{\bar m-1}$, $\tau \in (0,1]$, $\theta \in (0,1)$,
                                     $\gamma > 0$, $\delta \in (0,1)$ and  $\hat \alpha^0_1,\ldots,\hat \alpha^0_{\bar m} > 0$
\item[]\hspace*{-0.1truecm}$\,\,\,2$\hspace*{0.1truecm} For $k=0,1,\ldots$
\item[]\hspace*{-0.1truecm}$\,\,\,3$\hspace*{0.9truecm} Choose $j_k$ such that $\displaystyle{y^k_{j_k} \ge \tau \max_{i=1,\ldots,\bar m} y^k_i}$
                                   and let $\alpha^k_{j_k} = 0$
\item[]\hspace*{-0.1truecm}$\,\,\,4$\hspace*{0.9truecm} Set $z^k_1 = y^k$
\item[]\hspace*{-0.1truecm}$\,\,\,5$\hspace*{0.9truecm} For $i=1,\dots,\ \bar m$
\item[]\hspace*{-0.1truecm}$\,\,\,6$\hspace*{1.7truecm} If $(i\neq j_k)$ then
\item[]\hspace*{-0.1truecm}$\,\,\,7$\hspace*{2.5truecm} Set $\tilde d = e_i - e_{j_k}$ %or $d^k_i = e_{j_k} - e_i$
\item[]\hspace*{-0.1truecm}$\,\,\,8$\hspace*{2.5truecm} Compute $\alpha$ and $d$ by $\texttt{Line Search Procedure}(z^k_i,\tilde d,\hat \alpha^k_i,\gamma,\delta)$
\item[]\hspace*{-0.1truecm}$\,\,\,9$\hspace*{2.5truecm} If $\alpha = 0$, then set $\hat \alpha^{k+1}_i = \theta \hat \alpha^k_i$
\item[]\hspace*{-0.1truecm}$10$\hspace*{2.5truecm} else set $\hat \alpha^{k+1}_i = \alpha$
\item[]\hspace*{-0.1truecm}$11$\hspace*{1.7truecm} else set $\alpha=0$ and $d=0$
\item[]\hspace*{-0.1truecm}$12$\hspace*{1.7truecm} End if
\item[]\hspace*{-0.1truecm}$13$\hspace*{1.7truecm} Set $\alpha^k_i = \alpha$,   $d_i^k=d$ and $z^k_{i+1} = z^k_i + \alpha^k_i d^k_i$
\item[]\hspace*{-0.1truecm}$14$\hspace*{0.9truecm} End for
\item[]\hspace*{-0.1truecm}$15$\hspace*{0.9truecm} Let $\xi_i = \hat \alpha^{k+1}_i$, $i\in \{1,\ldots,\bar m\} \setminus\{j_k\}$, and $\xi_{j_k} = \hat \alpha^k_{j_k}$
\item[]\hspace*{-0.1truecm}$16$\hspace*{0.9truecm} Set $\displaystyle{\hat \alpha^{k+1}_{j_k} = \min_{i = 1,\ldots,\bar m} \xi_i}$
\item[]\hspace*{-0.1truecm}$17$\hspace*{0.9truecm} Set $y^{k+1} = z^k_{\bar m+1}$
\item[]\hspace*{-0.1truecm}$18$\hspace*{0.1truecm} End for
\par\vspace*{0.1cm}
}
\end{algorithmic}
\end{algorithm}

\begin{algorithm}
\caption{$\texttt{Line Search Procedure}(z,d,\hat \alpha,\gamma,\delta)$}
\label{alg:ls}
\begin{algorithmic}
{\footnotesize
\par\vspace*{0.1cm}
\item[]\hspace*{-0.1truecm}$\,\,\,1$\hspace*{0.1truecm} Compute the largest $\bar \alpha$ such that $z + \bar \alpha d \in \Delta_{\bar m-1}$  and  set $\alpha = \min\{\bar \alpha, \hat \alpha\}$
\item[]\hspace*{-0.1truecm}$\,\,\,2$\hspace*{0.1truecm} If $\alpha > 0$ and $\varphi(z+\alpha d) \le \varphi(z) - \gamma \alpha^2$, then go to line 6
\item[]\hspace*{-0.1truecm}$\,\,\,3$\hspace*{0.1truecm} Compute the largest $\bar \alpha$ such that $z - \bar \alpha d \in \Delta_{\bar m-1}$ and set $\alpha = \min\{\bar \alpha, \hat \alpha\}$
\item[]\hspace*{-0.1truecm}$\,\,\,4$\hspace*{0.1truecm} If $\alpha > 0$ and $\varphi(z-\alpha d) \le \varphi(z) - \gamma \alpha^2$, then set $d = -d$ and go to line 6
\item[]\hspace*{-0.1truecm}$\,\,\,5$\hspace*{0.1truecm} Set $\alpha = 0$ and go to line 10
\item[]\hspace*{-0.1truecm}$\,\,\,6$\hspace*{0.1truecm} Let $\beta = \min\{\bar \alpha, \alpha/\delta\}$
\item[]\hspace*{-0.1truecm}$\,\,\,7$\hspace*{0.1truecm} While ($\alpha < \bar \alpha$ and $\varphi(z+\beta d) \le \varphi(z) - \gamma \beta^2$)
\item[]\hspace*{-0.1truecm}$\,\,\,8$\hspace*{0.9truecm} Set $\alpha = \beta$ and $\beta = \min\{\bar \alpha, \alpha/\delta\}$
\item[]\hspace*{-0.1truecm}$\,\,\,9$\hspace*{0.1truecm} End while
\item[]\hspace*{-0.1truecm}$10$\hspace*{0.1truecm} Return $\alpha$, $d$
\par\vspace*{0.1cm}
}
\end{algorithmic}
\end{algorithm}

It should be noticed that, in practice, shuffling the
search directions used at each iteration $k$ can improve performances.
All the theoretical results that will be shown below can be easily adapted to that case.

\subsection{Theoretical analysis}

To analyze the theoretical properties of the algorithm,
let us first recall a stationarity condition for problem~\eqref{prob_sim}.
\begin{proposition}
A feasible point $y^*$ of Problem~\eqref{prob_sim} is stationary if and only if there exists $\lambda^* \in \R$ such that, for all $i = 1,\ldots,\bar m$,
\begin{equation}\label{kkt}
\nabla_i \varphi(y^*)
\begin{cases}
\ge \lambda^*, \quad & \text{if } y^*_i = 0, \\
= \lambda^*,   \quad & \text{if } y^*_i > 0.
\end{cases}
\end{equation}
\end{proposition}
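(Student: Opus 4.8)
The plan is to prove the two implications of the equivalence separately, taking as the working definition of stationarity the standard first-order variational inequality over the convex feasible set, namely that $y^*\in\Delta_{\bar m-1}$ satisfies $\nabla\varphi(y^*)^T(y-y^*)\ge 0$ for every $y\in\Delta_{\bar m-1}$. The multiplier $\lambda^*$ will be identified with the common value of the partial derivatives of $\varphi$ on the support of $y^*$; the only point requiring care is to check that this value is well defined.

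For the sufficiency direction ($\Leftarrow$), I would assume \eqref{kkt} holds and write $\nabla_i\varphi(y^*)=\lambda^*+\mu_i$ with $\mu_i\ge 0$ and $\mu_i=0$ whenever $y_i^*>0$ (so that $\mu_i y_i^*=0$ for all $i$). For an arbitrary $y\in\Delta_{\bar m-1}$ I would expand
\[
\nabla\varphi(y^*)^T(y-y^*)=\lambda^*\,e^T(y-y^*)+\sum_{i=1}^{\bar m}\mu_i(y_i-y_i^*).
\]
The first term vanishes because $e^Ty=e^Ty^*=1$, while the second equals $\sum_i\mu_i y_i\ge 0$ since $\mu_i\ge 0$, $y_i\ge 0$, and $\sum_i\mu_i y_i^*=0$. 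Hence the inner product is nonnegative and $y^*$ is stationary. This direction is routine bookkeeping.

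For the necessity direction ($\Rightarrow$), I would assume $y^*$ stationary. For any pair of indices $i$ and $j$ with $y_j^*>0$, the direction $e_i-e_j$ is feasible at $y^*$: it preserves the equality constraint since $e^T(e_i-e_j)=0$, and for small $t>0$ the point $y^*+t(e_i-e_j)$ stays nonnegative because its $j$-th component $y_j^*-t$ remains positive and its $i$-th component only increases. Plugging $y=y^*+t(e_i-e_j)$ into the variational inequality and dividing by $t$ yields $\nabla_i\varphi(y^*)\ge\nabla_j\varphi(y^*)$ for every $i$ and every $j$ in the support of $y^*$. Applying this with two support indices $j_1,j_2$ in both orders shows $\nabla_{j_1}\varphi(y^*)=\nabla_{j_2}\varphi(y^*)$, so the partial derivatives are constant on the support, which is nonempty since $e^Ty^*=1$; call this common value $\lambda^*$. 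Then $\nabla_i\varphi(y^*)=\lambda^*$ for every $i$ with $y_i^*>0$, and $\nabla_i\varphi(y^*)\ge\lambda^*$ for every $i$ with $y_i^*=0$, which is exactly \eqref{kkt}.

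The main obstacle is concentrated in the necessity direction: verifying that $\lambda^*$ is well defined, i.e., that all partial derivatives over the support coincide, and confirming that the finitely many directions $e_i-e_j$ with $y_j^*>0$ already suffice to recover the full variational inequality. Everything else reduces to the elementary manipulations above.
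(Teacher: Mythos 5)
Your proof is correct and complete: the paper itself states this proposition without proof (it merely recalls it as the standard stationarity/KKT condition for minimization over the unit simplex), and your argument --- sufficiency via the complementarity decomposition $\nabla_i\varphi(y^*)=\lambda^*+\mu_i$ and necessity via the feasible directions $e_i-e_j$ with $y_j^*>0$ --- is exactly the standard derivation one would supply. Both directions are sound, including the identification of $\lambda^*$ as the common value of the partial derivatives on the (nonempty) support.
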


We now show that the line search strategy embedded in \DFSIMPL\ always terminates in a finite number of steps.

\begin{proposition}
\texttt{Line Search Procedure} has finite termination.
\end{proposition}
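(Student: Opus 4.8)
The plan is to observe that the only part of \texttt{Line Search Procedure} that is not a single straight-line pass is the \emph{expansion} loop at lines 7--9; lines 1--6 are executed exactly once, so it suffices to show that this \textbf{while} loop terminates after finitely many iterations. Two facts drive the argument. First, the quantity $\bar\alpha$ computed at line 1 (or line 3) is finite: the feasible set $\Delta_{\bar m-1}$ is bounded and the search direction $d = e_i - e_{j_k}$ (with $i \ne j_k$) used by \DFSIMPL\ is nonzero, so the largest feasible step along $d$ from $z$ is a finite positive number. Second, inside the loop every update replaces $\alpha$ by $\beta = \min\{\bar\alpha,\ \alpha/\delta\}$, and since $\delta \in (0,1)$ we have $\alpha/\delta > \alpha$; hence each pass through the loop either caps $\alpha$ at $\bar\alpha$ or multiplies it by the fixed factor $1/\delta > 1$.

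Next I would make the geometric-growth bound explicit. Let $\alpha_0 > 0$ denote the value of $\alpha$ at the moment control first reaches line 6; it is strictly positive because control arrives there only via line 2 or line 4, both of which are guarded by $\alpha > 0$. As long as the loop keeps iterating, $\alpha$ is multiplied by at least $1/\delta$ each time, so after $t$ iterations it is at least $\alpha_0\,\delta^{-t}$. Since $\bar\alpha$ is finite, there is a smallest index $t^*$ with $\alpha_0\,\delta^{-t^*} \ge \bar\alpha$; at that point the minimum defining $\beta$ is attained by $\bar\alpha$, so $\beta = \bar\alpha$ and the assignment at line 8 sets $\alpha = \bar\alpha$. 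On the subsequent test of the loop guard the condition $\alpha < \bar\alpha$ fails and the loop exits. Concretely, the number of iterations is bounded by $\lceil \log_{1/\delta}(\bar\alpha/\alpha_0)\rceil + 1$.

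Finally, I would note that the loop can also terminate earlier through the second half of its guard: if at some iteration the sufficient-decrease test $\varphi(z+\beta d) \le \varphi(z) - \gamma \beta^2$ fails, the conjunction in line 7 is false and the procedure leaves the loop at once. Either way termination occurs after finitely many steps, and the procedure reaches line 10. There is no genuinely hard step here: the whole content is the elementary observation that a quantity growing geometrically cannot remain below the fixed finite ceiling $\bar\alpha$ indefinitely. The only point needing a line of care is confirming $\alpha_0 > 0$, so that the multiplicative growth is strict; as noted above, this follows because line 6 is reached exclusively from line 2 or line 4.
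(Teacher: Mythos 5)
Your proof is correct and rests on the same core fact as the paper's: the stepsize grows geometrically by the factor $1/\delta>1$ inside the expansion loop, while feasibility in the bounded simplex caps it at the finite value $\bar\alpha$. The paper phrases this as a one-line contradiction (non-termination would produce a divergent increasing sequence of feasible stepsizes), whereas you give the direct version with an explicit iteration bound; this is a presentational difference, not a different argument.
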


\begin{proof}
We need to show that the while loop at lines 7--9 ends in a finite number of steps.
Arguing by contradiction, assume that this is not true. Then, within the while loop we generate a divergent monotonically increasing sequence of feasible stepsizes $\alpha$'s,
which contradicts the fact that $\Delta_{\bar m-1}$ is a bounded set.
\end{proof}

In the next proposition, we prove that the stepsizes $\alpha^k_i$ generated using our line search
go to zero. This is a
 standard technical result that will
be needed to show convergence of the algorithm.

\begin{proposition}\label{prop:lim_alpha}
Let $\{y^k\}$ be a sequence of points produced by \DFSIMPL. Then,
\[
\lim_{k \to \infty} \alpha^k_i = 0, \quad i = 1,\ldots,\bar m.
\]
\end{proposition}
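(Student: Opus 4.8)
The plan is to show that the objective values $\{\varphi(y^k)\}$ decrease monotonically, with a guaranteed per-step reduction proportional to $\sum_i (\alpha^k_i)^2$, and that boundedness of $\varphi$ on the compact simplex then forces the stepsizes to vanish.

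First I would extract the sufficient-decrease inequality produced by the line search. Inspecting Algorithm~\ref{alg:ls}, whenever it returns a nonzero stepsize $\alpha^k_i$, the accepted point obeys the Armijo-type test $\varphi(z^k_i + \alpha^k_i d^k_i) \le \varphi(z^k_i) - \gamma (\alpha^k_i)^2$; when it returns $\alpha^k_i = 0$ (including the forced case $i = j_k$), line~13 of Algorithm~\ref{alg:basic} leaves the iterate unchanged, $z^k_{i+1} = z^k_i$, so the same inequality holds trivially. Using the inner update $z^k_{i+1} = z^k_i + \alpha^k_i d^k_i$, this yields, for every $i = 1,\ldots,\bar m$,
\[
\varphi(z^k_{i+1}) \le \varphi(z^k_i) - \gamma (\alpha^k_i)^2 .
\]

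Next I would chain these inequalities across the inner loop. Since $z^k_1 = y^k$ and $z^k_{\bar m+1} = y^{k+1}$, telescoping over $i$ gives
\[
\varphi(y^{k+1}) \le \varphi(y^k) - \gamma \sum_{i=1}^{\bar m} (\alpha^k_i)^2 ,
\]
so $\{\varphi(y^k)\}$ is non-increasing. As each $y^k$ lies in the compact set $\Delta_{\bar m-1}$ and $\varphi$ is continuous, $\varphi$ is bounded below there, hence the monotone sequence $\{\varphi(y^k)\}$ converges. Summing the displayed inequality from $k=0$ to $K$ produces $\gamma \sum_{k=0}^{K} \sum_{i=1}^{\bar m} (\alpha^k_i)^2 \le \varphi(y^0) - \varphi(y^{K+1})$; letting $K \to \infty$, the right-hand side is bounded by $\varphi(y^0) - \lim_k \varphi(y^k) < \infty$. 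The nonnegative series $\sum_k \sum_i (\alpha^k_i)^2$ therefore converges, so each term tends to zero, giving $\alpha^k_i \to 0$ for all $i$.

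The one point requiring care is the $\alpha^k_i > 0$ case of the first step: I would need to confirm that the stepsize surviving the expansion loop (lines~6--9) still satisfies the decrease test. This is immediate, because the loop only overwrites $\alpha$ by a candidate $\beta$ that has just passed the condition $\varphi(z + \beta d) \le \varphi(z) - \gamma \beta^2$ in the \emph{while} guard, so the returned $\alpha$ inherits that inequality. Everything else is routine telescoping and summability.
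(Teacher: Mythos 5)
Your proof is correct and follows essentially the same route as the paper: both rest on the sufficient-decrease inequality $\varphi(z^k_{i+1}) \le \varphi(z^k_i) - \gamma (\alpha^k_i)^2$ guaranteed by the line search, the monotonicity and convergence of $\{\varphi(y^k)\}$ on the compact simplex, and the conclusion that the per-iteration decrease forces the stepsizes to vanish. The only cosmetic difference is that you telescope over all $i$ at once and invoke summability of the series, whereas the paper fixes each index $i$ and argues on the subsequence where $\alpha^k_i \ne 0$ that $\varphi(y^k)-\varphi(y^{k+1}) \ge \gamma(\alpha^k_i)^2 \to 0$; the two are interchangeable.
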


\begin{proof}
For every fixed $i \in \{1,\ldots,\bar m\}$, we partition the iterations into two subsets $K'$ and $K''$ such that
\[
\alpha^k_i = 0 \Leftrightarrow k \in K' \quad \text{and} \quad \alpha^k_i \ne 0 \Leftrightarrow k \in K''.
\]
If $K''$ is a finite set, necessarily $\alpha^k_i = 0$ for all sufficiently large $k$ and the result trivially holds.
If $K''$ is an  infinite set, to obtain the desired result we need to show that
\begin{equation}\label{lim_alpha_2}
\lim_{\substack{k \to \infty \\ k \in K''}} \alpha^k_i = 0.
\end{equation}
By instructions of the algorithm, for all $k \in K''$ we have that
\[
\varphi(y^{k+1}) \le \varphi(z^k_{i+1}) \le \varphi(z^k_i) - \gamma (\alpha^k_i)^2 \le \varphi(y^k) - \gamma (\alpha^k_i)^2.
\]
Combining these inequalities with the fact that $\Delta_{\bar m-1}$ is a bounded set and $\varphi$ is continuous, it follows that $\{\varphi(y^k)\}$ converges
and, since $\varphi(y^k) - \varphi(y^{k+1}) \ge \gamma (\alpha^k_i)^2$ for all $k \in K''$, we get~\eqref{lim_alpha_2}.
\end{proof}

By taking into account Proposition \ref{prop:lim_alpha}, it is easy to get the following corollary, related to the sequences of intermediate points $\{z_i^k\},\ i=1,\dots, \bar m$.

\begin{corollary}\label{corol:lim_z}
Let $\{y^k\}$ be a sequence of points produced by \DFSIMPL. Then,
\[
\lim_{k \to \infty} \|y^k - z^k_i\| = 0, \quad i = 1,\ldots,\bar m.
\]
\end{corollary}

 We now give the proof of another important result for the global convergence analysis. More specifically, we show that starting stepsizes $\hat \alpha^k_i$ considered in the algorithm go to zero as well.

\begin{proposition}\label{prop:lim_hat_alpha}
Let $\{y^k\}$ be a sequence of points produced by \DFSIMPL. Then,
\[
\lim_{k \to \infty} \hat \alpha^k_i = 0, \quad i = 1,\ldots,\bar m.
\]
\end{proposition}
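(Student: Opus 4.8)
The plan is to fix an index $i \in \{1,\dots,\bar m\}$ and prove $\hat\alpha^k_i \to 0$ by leveraging Proposition~\ref{prop:lim_alpha}, which already gives $\alpha^k_i \to 0$ for every $i$. For the fixed $i$, I would classify each iteration $k$ according to how line~9, line~10 or line~16 updates $\hat\alpha^k_i$: type (A), when $i \ne j_k$ and the line search succeeds ($\alpha^k_i \ne 0$), so $\hat\alpha^{k+1}_i = \alpha^k_i$; type (B), when $i \ne j_k$ and the line search fails ($\alpha^k_i = 0$), so $\hat\alpha^{k+1}_i = \theta\hat\alpha^k_i$; and type (C), when $i = j_k$, so $\hat\alpha^{k+1}_i = \min_{\ell}\xi_\ell \le \hat\alpha^k_i$, the inequality holding because $\xi_{j_k} = \hat\alpha^k_{j_k} = \hat\alpha^k_i$ is among the minimized quantities. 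The immediate remark is that types (B) and (C) never increase the starting stepsize, whereas type (A) resets it to $\alpha^k_i$, which vanishes.

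The core of the argument is a trapping property. Fix $\epsilon > 0$ and let $K_1$ be such that $\alpha^k_i < \epsilon$ for all $k \ge K_1$ (Proposition~\ref{prop:lim_alpha}). I claim that as soon as $\hat\alpha^k_i < \epsilon$ for some $k \ge K_1$, it stays below $\epsilon$ thereafter, since type (A) resets to $\alpha^k_i < \epsilon$ while types (B),(C) are non-increasing. Hence it suffices to show $\hat\alpha^k_i$ drops below $\epsilon$ at least once after $K_1$. I would argue this by contradiction: if $\hat\alpha^k_i \ge \epsilon$ for all $k \ge K_1$, then no type (A) iteration can occur past $K_1$ (it would force $\hat\alpha^{k+1}_i < \epsilon$), and only finitely many type (B) iterations can occur (otherwise the repeated contractions by $\theta \in (0,1)$ would push the sequence below $\epsilon$). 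Therefore $i = j_k$ for all sufficiently large $k$.

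To dispose of this last scenario I would first establish an auxiliary fact: any index $\ell$ that is eventually never selected as the reference index (i.e.\ $\ell \ne j_k$ for all large $k$) satisfies $\hat\alpha^k_\ell \to 0$. This is exactly the trapping argument restricted to types (A) and (B) — no type (C) enters — and uses only $\alpha^k_\ell \to 0$ together with $\theta < 1$; in particular it is free of circularity. Now, when $i = j_k$ for all large $k$, every $\ell \ne i$ is eventually never the reference, so $\hat\alpha^k_\ell \to 0$ for each such $\ell$; the minimum rule at line~16 then yields $\hat\alpha^{k+1}_i \le \min_{\ell \ne i}\hat\alpha^{k+1}_\ell \to 0$ (assuming $\bar m \ge 2$; the case $\bar m = 1$ is trivial), contradicting $\hat\alpha^k_i \ge \epsilon$. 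Since $\epsilon$ is arbitrary, $\hat\alpha^k_i \to 0$.

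I expect the minimum update at line~16 to be the main obstacle and the only genuinely non-routine ingredient. Without it, an index permanently chosen as $j_k$ could retain a large starting stepsize, because type (C) by itself delivers only monotonicity, not decay. The decisive observation is that line~16 chains the reference index's starting stepsize to the vanishing starting stepsizes of the remaining indices, which are controlled by the auxiliary fact; arranging the proof so that this auxiliary fact relies on types (A),(B) alone is precisely what keeps the reasoning non-circular.
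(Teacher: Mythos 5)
Your proof is correct and takes essentially the same route as the paper's: the same partition of iterations into the three update types, the same identification of the hard case as the one where $i=j_k$ permanently, and the same resolution of that case via the minimum rule at line~16 together with the fact that indices never selected as the reference have vanishing starting stepsizes. The only differences are presentational—your $\epsilon$-trapping contradiction collapses the paper's explicit case analysis on which of the three index sets are infinite—plus the minor caveat that the degenerate case $\bar m=1$ is not actually ``trivial'' (there $\hat\alpha^k_1$ stays constant), though the paper's proof likewise implicitly assumes $\bar m\ge 2$.
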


\begin{proof}
For every fixed $i \in \{1,\ldots,\bar m\}$, we partition the iterations into three subsets $K_1$, $K_2$ and $K_3$ such that
\begin{equation}\label{K1K2K3}
\alpha^k_i \ne 0 \Leftrightarrow k \in K_1, \quad \alpha^k_i = 0, i \ne j_k \Leftrightarrow k \in K_2 \quad \text{and} \quad i = j_k \Leftrightarrow k \in K_3.
\end{equation}
From the instructions of the algorithm, we have that
\begin{align}
\hat \alpha^{k+1}_i = \alpha^k_i \ge \hat \alpha^k_i, \quad & \forall\ k \in K_1, \label{property_K1} \\
\hat \alpha^{k+1}_i = \theta \hat \alpha^k_i < \hat \alpha^k_i, \quad & \forall\ k \in K_2, \label{property_K2} \\
\hat \alpha^{k+1}_i = \min\{\hat \alpha^{k+1}_h, \hat \alpha^k_i\} \le \hat \alpha^k_i, \quad h \in \{1,\ldots,\bar m\} \setminus \{j_k\}, \quad & \forall\ k \in K_3. \label{property_K3}
\end{align}

If $K_1$ is an infinite subset, using~\eqref{property_K1} and Proposition~\ref{prop:lim_alpha} we obtain
\begin{equation}\label{lim_hat_alpha_proof}
\lim_{\substack{k \to \infty \\ k \in K_1}} \hat \alpha^{k+1}_i = 0,
\end{equation}
which, combined with~\eqref{property_K2} and~\eqref{property_K3}, yields to the desired result.
Therefore, in the rest of the proof we assume $K_1$ to be a finite set.

First, consider the case where $K_3$ is a finite set, that is, there exists $\bar k$ such that $k \in K_2$ for all $k \ge \bar k$.
For each $k \in K_2$, define $l_k$ as the largest iteration index such that $l_k < k$ and $l_k \in K_1$ (if it does not exist, we let $l_k = 0$).
Also define $q_k$ as the number of iterations belonging to $K_3$ between $l_k$ and $k$.
Therefore, there are $k-l_k-q_k$ iterations belonging to $K_2$ between $l_k$ and $k$.
From~\eqref{property_K2}--\eqref{property_K3}, it follows that
\[
\hat \alpha^{k+1}_i \le \theta^{k-l_k-q_k} \, \hat \alpha^{l_k+1}_i.
\]
Using the fact that both $l_k$ and $q_k$ are bounded from above (since both $K_1$ and $K_3$ are finite sets),
we have that $\lim_{\substack{k \to \infty \\ k \in K_2}} \theta^{k-l_k-q_k} = 0$. Therefore,
$\lim_{\substack{k \to \infty \\ k \in K_2}} \hat \alpha^{k+1}_i = \lim_{k \to \infty} \hat \alpha^{k+1}_i = 0$ and the desired result is obtained.

Now, we consider the case where $K_3$ is an infinite set and we distinguish two subcases.
If $K_2$ is an infinite set, from~\eqref{property_K2} and~\eqref{property_K3} we have that
$\lim_{\substack{k \to \infty \\ k \in K_2 \cup K_3}} \hat \alpha^{k+1}_i = \lim_{k \to \infty} \hat \alpha^{k+1}_i = 0$
and the desired result is obtained.
Else (i.e., if $K_2$ is a finite set), there exists $\tilde k$ such that $k \in K_3$ for all $k \ge \tilde k$ and, picking any index $t \in \{1,\ldots,\bar m\} \setminus \{i\}$,
we can partition the iterations into three subsets $Q_1$, $Q_2$ and $Q_3$ such that
\[
\alpha^k_t \ne 0 \Leftrightarrow k \in Q_1, \quad \alpha^k_t = 0, t \ne j_k \Leftrightarrow k \in Q_2 \quad \text{and} \quad t = j_k \Leftrightarrow k \in Q_3.
\]
Since $i \in K_3$ for all $k \ge \tilde k$, we have that $Q_3$ is a finite set and, with the same arguments given above for the case where $K_3$ is a finite set,
we obtain that $\lim_{k \to \infty} \hat \alpha^k_t = 0$.
Using the fact that, from the instructions of the algorithm,
\[
\hat \alpha^{k+1}_i \le \min_{h \in \{1,\ldots,\bar m\} \setminus \{i\}} \hat \alpha^{k+1}_h, \quad \forall\ k \in K_3,
\]
the desired result is obtained.
\end{proof}

Now, we can state the main convergence result related to \DFSIMPL. In particular, we show that every limit point of the sequence $\{y^k\}$ generated by the proposed method is stationary for Problem~\eqref{prob_sim}.

\begin{theorem}\label{th:conv_basic}
Let $\{y^k\}$ be a sequence of points produced by \DFSIMPL. Then, every limit point $y^*$ is stationary for Problem~\eqref{prob_sim}.
\end{theorem}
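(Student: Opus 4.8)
The plan is to fix a limit point $y^*$ together with a subsequence $\{y^k\}_{k \in K}$ with $y^k \to y^*$, and then to verify the stationarity characterization \eqref{kkt}. Since the index $j_k$ ranges over the finite set $\{1,\dots,\bar m\}$, I would first refine $K$ so that $j_k \equiv j$ is constant on $K$; similarly, whenever convenient I will pass to further subsequences so that, for a fixed $i$, the outcome of the line search (acceptance of $e_i-e_j$, acceptance of $e_j-e_i$, or failure of both) is the same for all $k \in K$. Two facts get used throughout: because the components of $y^k$ sum to one, $y^k_j = y^k_{j_k} \ge \tau \max_i y^k_i \ge \tau/\bar m$, so in the limit $y^*_j \ge \tau/\bar m > 0$; and by Corollary \ref{corol:lim_z} we have $z^k_i \to y^*$ for every $i$ along $K$. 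I will show that \eqref{kkt} holds with $\lambda^* = \nabla_j \varphi(y^*)$, which amounts to proving the two families of directional inequalities \textbf{(i)} $\nabla\varphi(y^*)^T(e_i-e_j)\ge 0$ for all $i\ne j$, and \textbf{(ii)} $\nabla\varphi(y^*)^T(e_j-e_i)\ge 0$ for all $i\ne j$ with $y^*_i>0$: indeed, \textbf{(i)} gives $\nabla_i\varphi(y^*)\ge\lambda^*$ everywhere, while \textbf{(i)} and \textbf{(ii)} together force equality whenever $y^*_i>0$.

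The engine of the argument is the usual derivative-free estimate obtained from the mean value theorem: if $\varphi(z^k_i + t^k d) > \varphi(z^k_i) - \gamma (t^k)^2$ along $K$ with $0 < t^k \to 0$, then $\nabla\varphi(y^*)^T d \ge 0$, whereas if $\varphi(z^k_i + t^k d) \le \varphi(z^k_i) - \gamma (t^k)^2$ with $0 < t^k \to 0$, then $\nabla\varphi(y^*)^T d \le 0$ (both follow by dividing the failing/accepted inequality by $t^k$, applying the mean value theorem, and letting $k\to\infty$ using $z^k_i \to y^*$ and continuity of $\nabla\varphi$). The crucial bookkeeping is that the largest feasible step from $z^k_i$ along $e_i-e_j$ equals $(z^k_i)_j \to y^*_j > 0$, and along $e_j - e_i$ equals $(z^k_i)_i \to y^*_i$. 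Hence, whenever the relevant component of $y^*$ is positive, the feasibility bound $\bar\alpha$ stays bounded away from zero, so that (recalling $\hat\alpha^k_i \to 0$ by Proposition \ref{prop:lim_hat_alpha} and $\alpha^k_i \to 0$ by Proposition \ref{prop:lim_alpha}) the vanishing test stepsizes are always controlled by $\hat\alpha^k_i$ or by the expansion step $\alpha^k_i/\delta$, never pinned at the boundary.

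To prove \textbf{(i)}, fix $i \ne j$ and consider whether the line search accepted the direction $e_i - e_j$. If it did not, then sufficient decrease failed at the tentative stepsize $\min\{\bar\alpha, \hat\alpha^k_i\}$, which for large $k$ equals $\hat\alpha^k_i$ (since $\bar\alpha \to y^*_j > 0$); this is a positive, vanishing step at which decrease fails, so the mean value estimate yields $\nabla\varphi(y^*)^T(e_i-e_j)\ge 0$. If instead $e_i - e_j$ was accepted, then for large $k$ the expansion loop cannot terminate at the boundary (that would force $\alpha^k_i = \bar\alpha \to y^*_j >0$, contradicting $\alpha^k_i \to 0$), so it terminates because sufficient decrease fails at $\beta = \alpha^k_i/\delta \to 0$, again giving $\nabla\varphi(y^*)^T(e_i-e_j)\ge 0$. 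Thus \textbf{(i)} holds in all cases.

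For \textbf{(ii)}, fix $i\ne j$ with $y^*_i>0$. If the negative direction $e_j-e_i$ was actually explored (i.e.\ $e_i-e_j$ was not accepted), the same failure analysis as above — now using $(z^k_i)_i \to y^*_i>0$ to keep the boundary away — gives $\nabla\varphi(y^*)^T(e_j-e_i)\ge 0$. The one remaining situation, and the main obstacle, is when $e_i-e_j$ was accepted so that $e_j-e_i$ is never tested at that iteration: here I would instead exploit the acceptance itself, namely $\varphi(z^k_i+\alpha^k_i(e_i-e_j)) \le \varphi(z^k_i) - \gamma(\alpha^k_i)^2$ with $\alpha^k_i \to 0$, which by the mean value estimate gives $\nabla\varphi(y^*)^T(e_i-e_j)\le 0$; combined with \textbf{(i)} this forces $\nabla\varphi(y^*)^T(e_i-e_j)=0$, hence $\nabla\varphi(y^*)^T(e_j-e_i)=0\ge 0$. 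This closes \textbf{(ii)}, and together with \textbf{(i)} establishes \eqref{kkt} with $\lambda^*=\nabla_j\varphi(y^*)$, proving that $y^*$ is stationary. The delicate points to handle with care are exactly this asymmetry of the two-sided search and the verification that the exhibited test stepsizes are simultaneously positive and vanishing, both of which hinge on the positivity of $y^*_j$ (and of $y^*_i$ in case \textbf{(ii)}).
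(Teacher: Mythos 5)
Your proof is correct, and its engine is the same as the paper's: apply the mean value theorem to the accepted or failed sufficient-decrease tests at stepsizes that are positive yet vanish (by Propositions~\ref{prop:lim_alpha} and~\ref{prop:lim_hat_alpha}), while using the positivity of $(z^k_i)_{j}$ or $(z^k_i)_i$ in the limit to guarantee that the tested stepsize is never pinned at the simplex boundary. The organization differs in two worthwhile ways. First, you obtain $y^*_j>0$ directly from $y^k_{j_k}\ge\tau\max_i y^k_i\ge\tau/\bar m$, which is simpler than the paper's argument via an auxiliary index $\bar h$ with $y^*_{\bar h}>0$ (and is in fact the bound the authors use later in Theorem~\ref{th:active_set}). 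Second, you argue directly rather than by contradiction: the paper assumes $\nabla_t\varphi(y^*)<\nabla_{\hat\jmath}\varphi(y^*)$ and uses that hypothesis to show the reverse direction $e_{\hat\jmath}-e_t$ cannot be accepted for large $k$, whereas you simply pass to a subsequence on which the line-search outcome is constant and, in the one asymmetric case where $e_i-e_j$ is accepted so that $e_j-e_i$ is never tested, you extract $\nabla\varphi(y^*)^T(e_i-e_j)\le 0$ from the acceptance inequality itself and combine it with the opposite inequality to get equality. This is a clean resolution of exactly the delicate point you flag, and it yields the slightly stronger conclusion that $\nabla\varphi(y^*)^T(e_i-e_j)=0$ whenever the direction is accepted infinitely often; the paper's contradiction-based route avoids the explicit case split over line-search outcomes at the price of having to justify which branch the line search takes under the contradiction hypothesis. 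Both arguments are complete; yours is, if anything, a little more transparent about where feasibility of the tested steps is needed.
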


\begin{proof}
Let us consider a subsequence such that
\[
\lim_{k \to \infty, \, k \in K} y^k = y^*,
\]
with $K \subseteq \{1,2,\ldots\}$.
Since the set of indices $\{1,\ldots,\bar m\}$ is finite, it is possible to consider a further subsequence,
still denoted by $\{y^k\}_K$ without loss of generality, such that $j_k = \hat \jmath$ for all $k \in K$.

We first show that a real number $\rho > 0$ and an iteration $\bar k \in K$ exist such that
\begin{equation}\label{y_j_pos}
(z^k_i)_{\hat \jmath} \ge \rho, \quad \forall\ k \ge \bar k, \, k \in K, \quad i = 1,\ldots, \bar m.
\end{equation}
Let $\bar h$ be any index such that $y^*_{\bar h} > 0$ and let $\rho$ be a positive real number such that
$y^*_{\bar h} \ge (4/\tau)\rho$.
For all sufficiently large $k \in K$ we have that $y^k_{\bar h} \ge (2/\tau)\rho$ and, recalling how we choose the index $j_k$ (see line~3 of Algorithm~\ref{alg:basic}),
for all sufficiently large $k \in K$ we obtain
\[
y^k_{\hat \jmath} \ge \tau \max_{i=1,\ldots,\bar m} y^k_i \ge \tau y^k_{\bar h} \ge 2 \rho.
\]
Using Corollary~\ref{corol:lim_z}, it follows that
\begin{equation}\label{lim_z}
\lim_{k \to \infty, \, k \in K} z^k_i = y^*, \quad i = 1,\ldots,\bar m,
\end{equation}
implying that~\eqref{y_j_pos} holds and $y^*_{\hat \jmath} > 0$.

From~\eqref{kkt} we have that $y^*$ is a stationary point if and only if a $\lambda^* \in \R$ exists such that
\[
\nabla_i \varphi(y^*)
\begin{cases}
\ge \lambda^*, \quad & \text{if } y^*_i = 0, \\
= \lambda^*,   \quad & \text{if } y^*_i > 0,
\end{cases}
\]
for all $i = 1,\ldots,\bar m$.
Since we have just proved that $y^*_{\hat \jmath} > 0$,
in our case we have that $y^*$ is a stationary point if and only if
\[
\nabla_i \varphi(y^*)
\begin{cases}
\ge \nabla_{\hat \jmath} \varphi(y^*), \quad & \text{if } y^*_i = 0, \\
= \nabla_{\hat \jmath} \varphi(y^*),   \quad & \text{if } y^*_i > 0,
\end{cases}
\]
for all $i = 1,\ldots,\bar m$.

So, assuming by contradiction that $y^*$ is not a stationary point, an index $t$ must exist such that one of the following two cases holds.
\begin{enumerate}[label=(\roman*), leftmargin=*]
\item $y^*_t = 0$ and $\nabla_t \varphi(y^*) < \nabla_{\hat \jmath} \varphi(y^*)$.
    By the mean value theorem, we can write
    \[
    \varphi(z^k_t - \hat \alpha^k_t (e_t - e_{\hat \jmath})) - \varphi(z^k_t) =
    -\hat \alpha^k_t \nabla \varphi(u^k_t)^T (e_t - e_{\hat \jmath}),
    \]
    where $u^k_t = z^k_t - \omega^k_t \hat \alpha^k_t (e_t - e_{\hat \jmath})$ and $\omega^k_t \in (0,1)$.
    Using Proposition~\ref{prop:lim_hat_alpha} and~\eqref{lim_z}, we have that
    \[
    \lim_{k \to \infty, \, k \in K} \nabla \varphi(u^k_t)^T (e_t - e_{\hat \jmath}) = \nabla \varphi(y^*)^T (e_t - e_{\hat \jmath}) = \nabla_t \varphi(y^*) - \nabla_{\hat \jmath} \varphi(y^*) < 0.
    \]
    It follows that, for all sufficiently large $k \in K$,
    \begin{equation}\label{f_incr}
    \varphi(z^k_t - \hat \alpha^k_t (e_t - e_{\hat \jmath})) > \varphi(z^k_t).
    \end{equation}

    Now, using Proposition~\ref{prop:lim_alpha} we have that, for all sufficiently large $k \in K$,
    \begin{equation}\label{z_feas}
    z^k_t + \hat \alpha^k_t (e_t - e_{\hat \jmath}) \in \Delta_{\bar m-1}.
    \end{equation}
    Taking into account~\eqref{z_feas} and the instructions of the algorithm, for all sufficiently large $k \in K$ either
    $\alpha^k_t = 0$ and
    $
    \varphi(z^k_t + \hat \alpha^k_t (e_t - e_{\hat \jmath})) > \varphi(z^k_t) - \gamma (\hat \alpha^k_t)^2,
    $
    or $\alpha^k_t \ne 0$.
    In the latter case, combining~\eqref{f_incr} and~\eqref{z_feas} we have that, for all sufficiently large $k \in K$, the algorithm does not move along the direction $e_{\hat \jmath}-e_t$, and then,
    $d^k_t = e_t-e_{\hat \jmath}$.
    Using Proposition~\ref{prop:lim_hat_alpha} we also get that, for all sufficiently large $k \in K$,
    $
    z^k_t + \frac{\alpha^k_t}{\delta} (e_t - e_{\hat \jmath}) \in \Delta_{\bar m-1}.
    $
    Therefore, taking into account the \texttt{Line Search Procedure} we have that
    \[
    \varphi\Bigl(z^k_t + \frac{\alpha^k_t}{\delta} (e_t - e_{\hat \jmath})\Bigr) > \varphi(z^k_t) - \gamma \Bigl(\frac{\alpha^k_t}{\delta}\Bigr)^2,
    \]
    for all sufficiently large $k \in K$.
    Using the mean value theorem in the two above inequalities, we have that either
    \[
    \nabla \varphi(\nu^k_t)^T (e_t - e_{\hat \jmath}) > - \gamma \hat \alpha^k_t \quad \text{or} \quad \nabla \varphi(s^k_t)^T (e_t - e_{\hat \jmath}) > - \gamma \frac{\alpha^k_t}{\delta},
    \]
    where $\nu^k_t = z^k_t + \pi^k_t \hat \alpha^k_t (e_t - e_{\hat \jmath})$, with $\pi^k_t \in (0,1)$ and $s^k_t = z^k_t + \eta^k_t [\alpha^k_t/\delta] (e_t - e_{\hat \jmath})$, with $\eta^k_t \in (0,1)$.
    Using Proposition~\ref{prop:lim_alpha}, Proposition~\ref{prop:lim_hat_alpha} and the continuity of $\nabla \varphi$, we can take the limits for $k \to \infty$, $k \in K$,
    and we obtain $\nabla \varphi(y^*)^T (e_t - e_{\hat \jmath}) \ge 0$, contradicting the fact that $\nabla_t \varphi(y^*) < \nabla_{\hat \jmath} \varphi(y^*)$.
\item $y^*_t > 0$ and $\nabla_t \varphi(y^*) \ne \nabla_{\hat \jmath} \varphi(y^*)$. First note that, since $y^*_{\hat \jmath} > 0$, necessarily $y^*_t < 1$
    and, consequently, for all sufficiently large $k \in K$ both the directions $\pm (e_t - e_{\hat \jmath})$ are feasible at $z^k_t$.

    Now, assume that $\nabla_t \varphi(y^*) < \nabla_{\hat \jmath} \varphi(y^*)$. Reasoning as in case~(i), we obtain $\nabla \varphi(y^*)^T (e_t - e_{\hat \jmath}) \ge 0$,
    thus getting a contradiction. Then, necessarily $\nabla_t \varphi(y^*) > \nabla_{\hat \jmath} \varphi(y^*)$ but, repeating again the same reasoning as in case~(i)
    with minor modifications, we obtain $\nabla \varphi(y^*)^T (e_t - e_{\hat \jmath}) \le 0$, getting a new contradiction and thus proving the desired result.
\end{enumerate}
\end{proof}

\subsection{Choice of the stopping condition}\label{subsec:stopping_cond_alg_basic}
Now, we describe the stopping condition employed in \DFSIMPL.
As we will see in the next section, this is a
key tool for the theoretical analysis of the  general inner approximation scheme that embeds
\DFSIMPL\ as solver of the reduced problem.
Moreover, under the assumption that $\nabla f$ is Lipschitz continuous, we will show that
the stationarity error of the solution returned by \DFSIMPL\ is upper bounded by a term that depends on the tolerance chosen in the stopping criterion (see Theorem~\ref{th:error_stat} below).

Given a tolerance $\epsilon > 0$, a standard choice in direct search methods is to terminate the algorithm when a suitable steplength control parameter falls below $\epsilon$.
In our case, this means that $\hat \alpha^k_i \le \epsilon$, $i = 1,\ldots,\bar m$.
Additionally, we prevent each $\hat \alpha^k_i$ to become smaller than $\epsilon$. In particular, at line $9$ of Algorithm~\ref{alg:basic}
instead of setting $\hat \alpha^{k+1}_i = \theta \hat \alpha^k_i$ we use the following rule:
\begin{equation}\label{upd_rule_hat_alpha}
\hat \alpha^{k+1}_i = \max\{\theta \hat \alpha^k_i, \epsilon\}.
\end{equation}
We see that, if $\epsilon = 0$, we have exactly the rule reported in the scheme of Algorithm~\ref{alg:basic}.
In order to stop the algorithm, we also require that no progress is made along any feasible direction, that is $\alpha^k_i = 0$ for all $i \ne j_k$.

Summarizing, given $\epsilon>0$, we use~\eqref{upd_rule_hat_alpha} to update each $\hat \alpha^{k+1}_i$ at line $9$ of Algorithm~\ref{alg:basic}
and we terminate the algorithm at the first iteration $k$ such that
\begin{equation}\label{stopping_cond_alg_basic}
\hat \alpha^k_i = \epsilon, \; \forall\ i \in \{1,\ldots,\bar m\}, \qquad \text{and} \qquad \alpha^k_i = 0, \; \forall\ i \ne j_k.
\end{equation}
In the next proposition it is shown that this stopping condition is well defined.

\begin{proposition}\label{prop:stopping_cond_alg_basic}
Given $\epsilon > 0$, the stopping condition~\eqref{stopping_cond_alg_basic} is satisfied
by \DFSIMPL\ after a finite number of iterations.
\end{proposition}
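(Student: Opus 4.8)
The plan is to argue by contradiction: suppose \DFSIMPL, run with the floored update \eqref{upd_rule_hat_alpha}, performs infinitely many iterations without \eqref{stopping_cond_alg_basic} ever holding. The starting point is the same monotonicity that underlies Proposition~\ref{prop:lim_alpha}: telescoping the inner-loop inequalities $\varphi(z^k_{i+1})\le\varphi(z^k_i)-\gamma(\alpha^k_i)^2$ over $i$ gives $\varphi(y^{k+1})\le\varphi(y^k)-\gamma\sum_{i=1}^{\bar m}(\alpha^k_i)^2$, and since $\varphi$ is continuous on the compact set $\Delta_{\bar m-1}$ the sequence $\{\varphi(y^k)\}$ converges. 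Hence $\sum_{k,i}(\alpha^k_i)^2<\infty$, so for every fixed $c>0$ only finitely many successful steps can satisfy $\alpha^k_i\ge c$. The proof then reduces to showing that \emph{only finitely many successes} ($\alpha^k_i\ne0$, $i\ne j_k$) occur.

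First I would record two structural facts. By \eqref{upd_rule_hat_alpha} together with the stated requirement that no $\hat\alpha^k_i$ ever drop below $\epsilon$, we have the invariant $\hat\alpha^k_i\ge\epsilon$ for all $k,i$. Moreover, the pivoting rule at line~3 and $\sum_i y^k_i=1$ give $y^k_{j_k}\ge\tau\max_i y^k_i\ge\tau/\bar m$, and combining this with Corollary~\ref{corol:lim_z} yields a threshold $\rho:=\tau/(2\bar m)$ and an index $\bar k$ with $(z^k_i)_{j_k}\ge\rho$ for all $k\ge\bar k$ and all $i$. Set $c:=\min\{\rho,\epsilon\}>0$.

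Next I would classify, for $k\ge\bar k$, the successful steps according to the \texttt{Line Search Procedure}. Along $+\tilde d=e_i-e_{j_k}$ the largest feasible step is $\bar\alpha=(z^k_i)_{j_k}\ge\rho$, so a success yields $\alpha^k_i\ge\min\{\rho,\hat\alpha^k_i\}\ge c$; along $-\tilde d=e_{j_k}-e_i$ with $(z^k_i)_i\ge\epsilon$ the trial step is $\min\{(z^k_i)_i,\hat\alpha^k_i\}\ge\epsilon\ge c$, so again $\alpha^k_i\ge c$. By summability there are only finitely many such successes. The one remaining possibility is a \emph{boundary success} along $e_{j_k}-e_i$ with $(z^k_i)_i<\epsilon$, which drives component $i$ exactly to $0$; this is where the real work lies. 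I would bound these by an accounting argument: to be zeroed again, component $i$ must first leave $0$, and the only move that can increase a component currently equal to $0$ is a success along $e_i-e_{j_k}$ — a component cannot be the pivot $j_k$ while it equals $0$, since the pivot satisfies $y_{j_k}\ge\tau/\bar m$, so it cannot be increased as a pivot-component either. Such a lift-off is exactly a $+\tilde d$ success, hence has step $\ge c$ and, by summability, occurs only finitely often. Thus each component is zeroed only finitely many times, the boundary successes are finite, and altogether only finitely many successes occur.

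Finally, once the last success has occurred, at some iteration $k_2$, every later iteration is fully unsuccessful, so the second part of \eqref{stopping_cond_alg_basic} holds for all $k>k_2$. It then remains to drive the starting stepsizes to the floor: for $i\ne j_k$ the failure update \eqref{upd_rule_hat_alpha} makes $\hat\alpha^k_i$ decrease geometrically until it reaches $\epsilon$ and then stay there, while line~16 sets the pivot stepsize to the minimum of the (already updated) other components and $\hat\alpha^k_{j_k}$, which equals $\epsilon$ once all other components have reached it; here the invariant $\hat\alpha^k_i\ge\epsilon$ is exactly what prevents a pivot stepsize from getting stuck strictly below $\epsilon$. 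Hence after finitely many further iterations all $\hat\alpha^k_i=\epsilon$, the first part of \eqref{stopping_cond_alg_basic} holds as well, and the algorithm stops, contradicting the assumed infinite run. The main obstacle is precisely the boundary-success case: everything else is forced by the monotone decrease and the floor $\epsilon$, but bounding the number of times a component can be pushed onto a face genuinely requires the lift-off/mass accounting above combined with the pivot lower bound $(z^k_i)_{j_k}\ge\rho$.
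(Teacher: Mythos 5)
Your proof is correct, but it takes a genuinely different route from the paper's. The paper argues component by component and by contradiction: assuming $\hat \alpha^k_i > \epsilon$ along a subsequence, it reuses the $K_1/K_2/K_3$ partition from Proposition~\ref{prop:lim_hat_alpha} together with $\lim_k \alpha^k_i = 0$ and the relation $\hat \alpha^{k+1}_i = \alpha^k_i \ge \hat \alpha^k_i \ge \epsilon$ on successful iterations to conclude that $K_1$ is finite and that the floored update then forces $\hat \alpha^k_i = \epsilon$; a second, analogous contradiction argument disposes of the requirement $\alpha^k_i = 0$ for $i \ne j_k$. You instead prove directly that only finitely many successful steps occur at all, via the square-summability $\sum_{k,i} (\alpha^k_i)^2 < \infty$ combined with a geometric classification of successes: interior successes have step at least $\min\{\rho,\epsilon\}$ thanks to the pivot lower bound $(z^k_i)_{j_k} \ge \rho$ and the floor $\hat \alpha^k_i \ge \epsilon$, while boundary successes that zero a component are counted by a lift-off argument (a zeroed component can only be re-lifted by a $+(e_i-e_{j_k})$ success, and cannot be the pivot while it is zero). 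Only then do you let the floored update drive every $\hat \alpha^k_i$ down to $\epsilon$. Your route is longer and does not recycle the machinery of Proposition~\ref{prop:lim_hat_alpha}, but it buys something concrete: the relation $\alpha^k_i \ge \hat \alpha^k_i$ used by the paper on successful iterations holds only when the maximal feasible steplength $\bar \alpha$ does not cap the accepted step below $\hat \alpha^k_i$, and this is precisely the situation your ``boundary success'' case isolates and handles explicitly, making your argument more self-contained and robust on that point, at the price of the extra combinatorial accounting.
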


\begin{proof}
First note that, in view of~\eqref{upd_rule_hat_alpha}, we have that
\[
\hat \alpha^k_i \ge \epsilon, \quad \forall\ k \ge 0, \quad \forall\ i \in \{1,\ldots,\bar m\}.
\]

Now we show that an iteration $\bar k$ exists such that
\begin{equation}\label{hat_alpha_eq_eps}
\hat \alpha^k_i = \epsilon, \quad \forall\ k \ge \bar k, \quad \forall\ i \in \{1,\ldots,\bar m\}.
\end{equation}
Proceeding by contradiction, assume that this is not true.
Then, an infinite subsequence $\{y^k\}_{K \subseteq \{0,1,\ldots\}}$ and an index $i \in \{1,\ldots,\bar m\}$ exist such that
\begin{equation}\label{contr_prood_stopping_cond}
\hat \alpha^k_i > \epsilon, \; \forall\ k \in K.
\end{equation}
Using the same arguments given in the proof of Proposition~\ref{prop:lim_alpha}, we have that
\begin{equation}\label{lim_alpha_proof_stopping_cond}
\lim_{k \to \infty} \alpha^k_i = 0.
\end{equation}
Then, to obtain the desired contradiction with~\eqref{contr_prood_stopping_cond} we can reason similarly as in the proof of Proposition~\ref{prop:lim_hat_alpha}, with minor changes that are now described.
Define $K_1$, $K_2$ and $K_3$ as in~\eqref{K1K2K3}. The following relations hold:
\begin{align}
\hat \alpha^{k+1}_i = \alpha^k_i \ge \hat \alpha^k_i \ge \epsilon, \quad & \forall\ k \in K_1, \label{property_K1_stopping_cond} \\
\epsilon \le \hat \alpha^{k+1}_i = \max\{\theta \hat \alpha^k_i, \epsilon\} \le \hat \alpha^k_i, \quad & \forall\ k \in K_2, \label{property_K2_stopping_cond} \\
\epsilon \le \hat \alpha^{k+1}_i \le \hat \alpha^k_i, \quad & \forall\ k \in K_3. \label{property_K3_stopping_cond}
\end{align}
From~\eqref{property_K1_stopping_cond} and~\eqref{lim_alpha_proof_stopping_cond}, we see that $K_1$ cannot be an infinite set.
So, we only have to consider the cases where $K_1$ is finite.
If $K_3$ is also a finite set (and then $K_2$ is an infinite set), we can define $l_k$ ad $q_k$ as in the proof of Proposition~\ref{prop:lim_hat_alpha} and for all $k \in K_2$ we obtain
$
\epsilon \le \hat \alpha^{k+1}_i \le \max\{\theta^{k-l_k-q_k} \, \hat \alpha^{l_k+1}_i,\epsilon\}.$
It follows that $\hat \alpha^k_i = \epsilon$ for all sufficiently large iterations.
If $K_3$ is a infinite set, we distinguish two subcases.
If $K_2$ is also an infinite set, from~\eqref{property_K2_stopping_cond} and~\eqref{property_K3_stopping_cond}
again we have $\hat \alpha^k_i = \epsilon$ for all sufficiently large iterations.
Else (i.e., if $K_2$ is a finite set), we can reason as in the last part of the proof of Proposition~\ref{prop:lim_hat_alpha},
defining in the same way the index $t$ and the three subsets $Q_1$, $Q_2$ and $Q_3$,
obtaining that $Q_3$ is a finite set and, with the same arguments given above for the case where $K_3$ is a finite set,
$\alpha^k_t = \epsilon$ for all sufficiently large iterations.
Using the fact that $\epsilon \le \hat \alpha^{k+1}_i \le \min_{h \in \{1,\ldots,\bar m\} \setminus \{i\}} \hat \alpha^{k+1}_h$ for all $k \in K_3$,
also in this case we obtain that $\hat \alpha^k_i = \epsilon$ for all sufficiently large iterations.
So,~\eqref{hat_alpha_eq_eps} holds.

Finally, to conclude the proof now we show that, for all sufficiently large iterations, $\alpha^k_i = 0$ for all $i \ne j_k$.
Proceeding by contradiction, assume that this is not true.
Then, an infinite subsequence $\{y^k\}_{K \subseteq \{0,1,\ldots\}}$ and an index $i \in \{1,\ldots,\bar m\}$ exist such that $\alpha^k_i > 0, \; \forall\ k \in K.$ From the instructions of the algorithm we have that
$
\hat \alpha^{k+1}_i = \alpha^k_i \ge \hat \alpha^k_i \ge \epsilon, \quad \forall\ k \in K.
$
Since, using again the same arguments given in the proof of Proposition~\ref{prop:lim_alpha}, we have that $\lim_{k \to \infty} \alpha^k_i = 0$,
we thus obtain a contradiction.
\end{proof}

\subsection{Additional stationarity results}\label{subsec:add_stat_res}
Using the stopping condition~\eqref{stopping_cond_alg_basic} with a given tolerance $\epsilon>0$,
we want to show that, when $\nabla f$ is Lipschitz continuous, the solution $\bar y$ returned by \DFSIMPL\ satisfies the following condition:
\begin{equation}\label{stat:ub}
\max_{y \in \Delta_{\bar m-1}} -\nabla \varphi(\bar y)^T (y - \bar y) \le C \epsilon,
\end{equation}
for a suitable constant $C>0$.
Note that $\bar y$ is stationary if and only if $$\max_{y \in \Delta_{\bar m-1}} -\nabla \varphi(\bar y)^T (y - \bar y)=0,$$ thus the quantity given in \eqref{stat:ub} provides a measure for the stationarity error at $\bar y$.

The desired error bound can be obtained by suitably adapting standard results of direct-search methods for linearly constrained problems (see~\cite{kolda2007stationarity,lewis2000pattern}). In order to carry out the analysis,  we first need to recall a few definitions and to point out some geometric properties of the search directions used in \DFSIMPL.

To this extent, it is convenient to consider a reformulation of Problem~\eqref{prob_sim} as an inequality constrained problem of the following form:

\begin{equation}\label{prob_sim_2}
\begin{split}
& \displaystyle\min_y \varphi(y) \\
& s.t. \quad c_i^T y \le b_i, \quad i = 1,\ldots,\bar m+2,
\end{split}
\end{equation}
where $c_1 = e$, $c_2 = -e$, $c_{i+2} = -e_i$, $i=1,\ldots,\bar m$,
and $b_1 = 1$, $b_2 = -1$, $b_{i+2} = 0$, $i=1,\ldots,\bar m$.

Let us recall the definition of \textit{active constraints}, \textit{tangent cone} and \textit{normal cone} for the above problem.

\begin{definition}
Let $y$ be a feasible point of Problem~\eqref{prob_sim_2}.
We say that a constraint $c_i$ is \textit{active} at $y$ if $c_i^T y = b_i$.
We also indicate with $Z(y)$ the index set of active constraints at $y$, that is,
$
Z(y) = \{i \colon c_i^T y = b_i\}.
$
\end{definition}

\begin{definition}
Let $y$ be a feasible point of Problem~\eqref{prob_sim_2}.
We indicate with $N(y)$ the normal cone at $y$, defined as the cone generated by the active constraints at~$y$:
\[
N(y) = \{v \in \R^{\bar m} \colon v = \sum_{i \in Z(y)} \lambda_i c_i, \, \lambda_i \ge 0, \, i \in Z(y)\}.
\]

We also indicate with $T(y)$ the tangent cone at $y$, defined as the polar of $N(y)$:
\[
T(y) = \{v \in \R^{\bar m} \colon v^T d \le 0, \, \forall\ d \in N(y)\}.
\]
\end{definition}

It is easy to see that the tangent cone $T(y)$ at
a feasible point $y$ of Problem~\eqref{prob_sim_2} can be equivalently described as follows:
\begin{equation}\label{lemma:tan_cone}
T(y) = \{v \in \R^{\bar m} \colon e^T v = 0, \, v_i \ge 0, \, i \colon y_i=0\}.
\end{equation}

Now, for every iteration $k$ of \DFSIMPL, let $D^k$ be the set of all the search directions in $\{\pm (e_i-e_{j_k}), \, i = 1,\ldots,\bar m, \, i \ne j_k\}$
that are feasible at $y^k$ (where a search direction $d$ is said to be \textit{feasible} at $y^k$ if there exists $\bar \alpha > 0$ such that
$y + \alpha d \in \Delta_{\bar m-1}$ for all $\alpha \in (0,\bar \alpha]$).
The next remark describes an important property of the set $D^k$.
%lemma shows that $D^k$ is a set of generators of $T(y^k)$.
\begin{remark}\label{lemma:gen_tan_come}
For every iteration $k$ of \DFSIMPL, $D^k$ is a set of generators for the tangent cone $T(y^k)$.
\end{remark}

From now on, given a vector $v$ and a convex cone $\cal C$, we define $v_{\cal C}$ as the projection of $v$ onto $\cal C$.
Thus, $v_{T(y)}$ is the projection of $v$ onto $T(y)$ and $v_{N(y)}$ is the projection of $v$ onto $N(y)$.

Before stating the desired result, we also need the following lemma to show that, for any vector $v \in \R^{\bar m}$ and for any iteration $k$, a direction $d \in D^k$ exists such that
the inner product $v^T d$ is lower bounded by $\norm{v_{T(y^k)}}$ up to some constant.

\begin{lemma}\label{lemma:cone_measure}
For every iteration $k$ of \DFSIMPL, we have that
\[
\displaystyle\max_{d \in D^k} v^T d  \ge \frac{\norm{v_{T(y^k)}}}{2(\bar m-1)}, \quad \forall\ v \in \R^{\bar m}.
\]
\end{lemma}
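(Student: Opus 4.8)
The plan is to reduce the statement to a bound on the projection $v_{T(y^k)}$ and then exploit the explicit structure of the generators in $D^k$. Throughout write $T = T(y^k)$, $N = N(y^k)$, $j = j_k$, and $w = v_{T(y^k)}$. First I would invoke Moreau's decomposition: since $N$ is the polar cone of $T$, every $v \in \R^{\bar m}$ splits as $v = v_{T(y^k)} + v_{N(y^k)}$ with $v_{T(y^k)}^T v_{N(y^k)} = 0$. This yields the identity
\[
v^T w = (v_{T(y^k)} + v_{N(y^k)})^T v_{T(y^k)} = \norm{v_{T(y^k)}}^2,
\]
which is the pivot of the whole argument, because it lets me work with the full vector $v$ (which is what enters $v^T d$) while only the norm of its projection $w$ appears on the right-hand side.

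Next I would produce an explicit nonnegative representation of $w$ over $D^k$. By Remark~\ref{lemma:gen_tan_come}, $D^k$ generates $T$, and since $w \in T$ there exist coefficients $\lambda_d \ge 0$ with $w = \sum_{d \in D^k} \lambda_d d$; the point is to choose them so that $\sum_d \lambda_d$ is controlled by $\norm{w}$. Using the characterization \eqref{lemma:tan_cone}, namely $e^T w = 0$ and $w_i \ge 0$ whenever $y^k_i = 0$, I would set, for each $i \ne j$: if $w_i \ge 0$, assign weight $w_i$ to the direction $e_i - e_j$ (which lies in $D^k$ because $y^k_j > 0$, so $e_i - e_j$ is always feasible); if $w_i < 0$, then necessarily $y^k_i > 0$ (as $w_i \ge 0$ on the zero set of $y^k$), so $e_j - e_i \in D^k$ and I assign it weight $-w_i$. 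A direct check of components, together with $e^T w = 0$ to settle the $j$-th component, shows $\sum_{d \in D^k} \lambda_d d = w$, and by construction $\sum_{d \in D^k} \lambda_d = \sum_{i \ne j} \lvert w_i \rvert$.

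Finally I would combine the two ingredients. From the identity and nonnegativity of the $\lambda_d$,
\[
\norm{v_{T(y^k)}}^2 = v^T w = \sum_{d \in D^k} \lambda_d \, (v^T d) \le \Bigl(\max_{d \in D^k} v^T d\Bigr) \sum_{d \in D^k} \lambda_d,
\]
so, whenever $v_{T(y^k)} \ne 0$,
\[
\max_{d \in D^k} v^T d \ge \frac{\norm{v_{T(y^k)}}^2}{\sum_{i \ne j} \lvert w_i \rvert}.
\]
Bounding $\sum_{i \ne j}\lvert w_i \rvert \le (\bar m - 1)\max_i \lvert w_i\rvert \le (\bar m - 1)\norm{w}$ gives the claimed inequality (in fact with room to spare relative to the stated constant $2(\bar m-1)$); the case $v_{T(y^k)} = 0$ is trivial since the right-hand side vanishes.

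The main obstacle is the middle step: getting a representation of $w$ whose coefficient sum is genuinely $O(\norm{w})$, rather than merely asserting existence of nonnegative coefficients. This is exactly where the matching between the feasible generators in $D^k$ and the sign pattern of $T$ in \eqref{lemma:tan_cone} is essential — the directions $e_j - e_i$ are available precisely on the indices $i$ where $w_i$ is allowed to be negative (i.e.\ $y^k_i > 0$), so the construction never calls for an unavailable direction. Verifying membership in $D^k$ of each chosen direction, and the component-wise identity for the $j$-th coordinate via $e^T w = 0$, are the points requiring care.
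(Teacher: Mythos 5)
Your argument is correct and follows essentially the same route as the paper's proof: both express $v_{T(y^k)}$ as the explicit nonnegative combination $\sum_{i\ne j_k}\operatorname{sign}(w_i)\,\lvert w_i\rvert\,(e_i-e_{j_k})$ of directions in $D^k$ (using $e^T w=0$ and the sign condition in~\eqref{lemma:tan_cone} to check that each chosen direction is indeed feasible, a point you spell out and the paper leaves implicit), combine this with $v^T v_{T(y^k)} \ge \norm{v_{T(y^k)}}^2$, and extract one good direction. The only difference is the final accounting: the paper picks the term above the average over the at most $2(\bar m-1)$ directions and bounds each coefficient by $\norm{v_{T(y^k)}}$, whereas you bound the total coefficient mass $\sum_{i\ne j_k}\lvert w_i\rvert$ by $(\bar m-1)\norm{v_{T(y^k)}}$, which in fact yields the slightly sharper constant $\bar m-1$ in place of $2(\bar m-1)$.
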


\begin{proof}
We first observe that any vector $\sigma \in T(y^k)$ can be expressed as a non-negative linear combination of the vectors in $D^k$ with coefficients
$|\sigma_i| \le \norm {\sigma}$, that is
\begin{equation}\label{w_tan_cone}
     \sigma =  \sum_{\substack{i \ne j_k \\ i \colon \sigma_i \ne 0}}  \text{sign}(\sigma_i)(e_i - e_{j_k})|\sigma_i|.
\end{equation}
Now, pick any vector $v \in \R^{\bar m}$ and, for the sake of simplicity, define $u_1,\ldots,u_{|D^k|}$ the directions in $D^k$.
It follows that there exist non-negative coefficients $\lambda_1,\ldots,\lambda_{|D^k|}$, with $0 \le \lambda_i \le \norm{v_{T(y^k)}}$, $i = 1,\ldots,|D^k|$, such that
$
v_{T(y^k)} = \sum_{i=1}^{|D^k|} \lambda_i u_i,
$
and then, $\displaystyle{v^T v_{T(y^k)} = \sum_{i=1}^{|D^k|} \lambda_i v^T u_i}$.
Therefore, an index $i \in \{1,\ldots,|D^k|\}$ exists such that
\[
\lambda_i v^T u_i \ge \frac1{|D^k|} v^T v_{T(y^k)} \ge \frac1{2(\bar m-1)} v^T v_{T(y^k)} \ge \frac1{2(\bar m-1)} \|v_{T(y^k)}\|^2,
\]
where the last inequality follows from the property of the projection.
Since we have $0 \le \lambda_i~\le~\norm{v_{T(y^k)}}$, the result is obtained.
\end{proof}

We are finally ready to provide a bound on the stationarity error for the solution returned by \DFSIMPL.
\begin{theorem}\label{th:error_stat}
Assume that $\nabla \varphi$ is Lipschitz continuous with constant $L$ and
the stopping condition~\eqref{stopping_cond_alg_basic} is used with a given tolerance $\epsilon > 0$.
Then, the solution $\bar y$ returned by \DFSIMPL\ is such that
\[
\max_{y \in \Delta_{\bar m-1}} -\nabla \varphi(\bar y)^T (y - \bar y) \le C \epsilon,
\]
where $C = 2 \sqrt2 (\bar m-1) (2L + \gamma)$.
\end{theorem}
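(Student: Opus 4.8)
The plan is to bound the stationarity measure on the left-hand side by a constant multiple of $\norm{(-\nabla\varphi(\bar y))_{T(\bar y)}}$, the norm of the projection of the negative gradient onto the tangent cone at the returned point $\bar y = y^{\bar k}$, and then to control this quantity through the search directions $D^{\bar k}$ by exploiting the stopping condition~\eqref{stopping_cond_alg_basic}. Three ingredients are combined: a polarity/diameter estimate relating the two stationarity measures, Lemma~\ref{lemma:cone_measure}, and a line-search argument.

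First I would reduce the objective of the theorem to a tangent-cone quantity. For any $y \in \Delta_{\bar m-1}$ the displacement $w = y - \bar y$ lies in $T(\bar y)$: indeed $e^T w = 0$ and $w_i = y_i \ge 0$ whenever $\bar y_i = 0$, so $w$ satisfies the description~\eqref{lemma:tan_cone}. Writing $v = -\nabla\varphi(\bar y) = v_{T(\bar y)} + v_{N(\bar y)}$ via the Moreau decomposition into the polar cones $T(\bar y)$ and $N(\bar y)$, and using $v_{N(\bar y)}^T w \le 0$ (since $w \in T(\bar y)$ is polar to $N(\bar y)$), I get $v^T w \le v_{T(\bar y)}^T w \le \norm{v_{T(\bar y)}}\,\norm{w}$. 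Because the diameter of $\Delta_{\bar m-1}$ equals $\norm{e_i - e_j} = \sqrt2$, we have $\norm{w} \le \sqrt2$, so
\[
\max_{y \in \Delta_{\bar m-1}} -\nabla\varphi(\bar y)^T(y-\bar y) \le \sqrt2\,\norm{(-\nabla\varphi(\bar y))_{T(\bar y)}}.
\]

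Next I would invoke Lemma~\ref{lemma:cone_measure} with $v = -\nabla\varphi(\bar y)$ at $k = \bar k$ to obtain $\norm{(-\nabla\varphi(\bar y))_{T(\bar y)}} \le 2(\bar m-1)\max_{d\in D^{\bar k}} -\nabla\varphi(\bar y)^T d$, which leaves me to show the per-direction bound $-\nabla\varphi(\bar y)^T d \le (2L+\gamma)\epsilon$ for every $d \in D^{\bar k}$. This is the heart of the proof and the step I expect to be most delicate. At the terminating iteration $\bar k$ we have $\alpha^{\bar k}_i = 0$ for all $i \ne j_{\bar k}$, hence by line~13 all intermediate points coincide, $z^{\bar k}_i = \bar y$, and each \texttt{Line Search Procedure} was run from $\bar y$ with starting stepsize $\hat\alpha^{\bar k}_i = \epsilon$ and returned $\alpha = 0$. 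A return value of $0$ means both the acceptance test at line~2 (direction $e_i - e_{j_{\bar k}}$) and at line~4 (direction $e_{j_{\bar k}} - e_i$) failed with a strictly positive trial step $\alpha \le \epsilon$; note that $\bar y_{j_{\bar k}} > 0$ (by the selection rule at line~3) makes $e_i - e_{j_{\bar k}}$ always feasible, while $e_{j_{\bar k}} - e_i \in D^{\bar k}$ exactly when $\bar y_i > 0$, which is precisely the case in which line~4 uses a positive step. In either case the failed Armijo test reads $\varphi(\bar y + \alpha d) > \varphi(\bar y) - \gamma\alpha^2$ for the relevant $d \in \{\pm(e_i - e_{j_{\bar k}})\}$.

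Finally, applying the mean value theorem $\varphi(\bar y + \alpha d) - \varphi(\bar y) = \alpha\nabla\varphi(\xi)^T d$ for some $\xi$ on the segment, the failed test gives $\nabla\varphi(\xi)^T d > -\gamma\alpha$; then Lipschitz continuity of $\nabla\varphi$ with $\norm{\bar y - \xi} \le \alpha\norm{d}$ and $\norm{d}^2 = 2$ yields
\[
\nabla\varphi(\bar y)^T d \ge \nabla\varphi(\xi)^T d - L\alpha\norm{d}^2 > -\gamma\alpha - 2L\alpha \ge -(2L+\gamma)\epsilon,
\]
using $\alpha \le \epsilon$. Hence $-\nabla\varphi(\bar y)^T d \le (2L+\gamma)\epsilon$ for every $d \in D^{\bar k}$, and chaining the three bounds gives $\max_{y\in\Delta_{\bar m-1}} -\nabla\varphi(\bar y)^T(y-\bar y) \le \sqrt2\cdot 2(\bar m-1)(2L+\gamma)\epsilon = C\epsilon$, as claimed. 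The main obstacle is the case analysis in the line-search step: one must verify feasibility of each candidate direction, check that the Armijo test was evaluated at a strictly positive step bounded by $\epsilon$, and account for the extrapolation (lines~6--9) so that a zero return genuinely certifies an Armijo failure rather than a feasibility obstruction.
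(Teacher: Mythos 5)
Your proposal is correct and follows essentially the same route as the paper's proof: the reduction $\max_{y \in \Delta_{\bar m-1}} -\nabla \varphi(\bar y)^T (y - \bar y) \le \sqrt 2\,\bigl\|[-\nabla \varphi(\bar y)]_{T(\bar y)}\bigr\|$ via the polar (Moreau) decomposition and the $\sqrt 2$ diameter bound, Lemma~\ref{lemma:cone_measure} applied with $v = -\nabla\varphi(\bar y)$, and the failed sufficient-decrease test at a step $0 < \alpha \le \epsilon$ combined with the mean value theorem and Lipschitz continuity. The only cosmetic differences are the order in which you chain the three ingredients and that you establish the per-direction bound uniformly over $D^{\bar k}$ rather than only for the maximizing direction.
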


\begin{proof}
Let $k$ be the last iteration of \DFSIMPL, so that $y^k = \bar y$.
In view of Lemma~\ref{lemma:cone_measure}, used with $v = -\nabla \varphi(\bar y)$, we have that a $d \in D^k$ exists such that
\begin{equation}\label{ineq_stat_proof}
-\nabla \varphi(\bar y)^T d \ge \frac{\bigl\|[-\nabla \varphi(\bar y)]_{T(\bar y)}\bigr\|}{2(\bar m-1)}.
\end{equation}
Since in~\eqref{stopping_cond_alg_basic} we require $\alpha^k_i = 0$ for all $i \ne j_k$
(i.e., no progress is made along any feasible direction),
from the instructions of the algorithm and the \texttt{Line Search Procedure} we have that
\begin{equation}\label{unsuccessful_k}
\varphi(\bar y + \alpha d) > \varphi(\bar y) - \gamma \alpha^2,
\end{equation}
with
\begin{equation}\label{alpha_proof}
0 < \alpha \le \epsilon,
\end{equation}
where the last inequalities for $\alpha$ follow from the fact that each $\hat \alpha^k_i$ is required to be equal to $\epsilon$ in~\eqref{stopping_cond_alg_basic}.
By the mean value theorem, $\varphi(\bar y + \alpha d) - \varphi(\bar y) = \alpha \nabla \varphi(\bar y + \eta \alpha d)^T d$, for some $\eta \in (0,1)$.
Thus, from~\eqref{unsuccessful_k}, we obtain $\alpha \nabla \varphi(\bar y + \eta \alpha d)^T d + \gamma \alpha^2 > 0$.
Dividing both terms by $\alpha$, we get
$
\nabla \varphi(\bar y + \eta \alpha d)^T d + \gamma \alpha > 0.
$
Now, we subtract $\nabla \varphi(\bar y)^T d$ to both terms of the above inequality, obtaining
\[
[\nabla \varphi(\bar y + \eta \alpha d) - \nabla \varphi(\bar y)]^T d + \gamma \alpha > -\nabla \varphi(\bar y)^T d.
\]
Using the fact that $\nabla \varphi$ is Lipschitz continuous, we have
$[\nabla \varphi(\bar y + \eta \alpha d) - \nabla \varphi(\bar y)]^T d \le L \eta \alpha \norm d^2 \le 2L \alpha$,
where the last inequality follows from the fact that $\eta \in (0,1)$ and $\norm d = \sqrt 2$. Then,
$
2L \alpha + \gamma \alpha > -\nabla \varphi(\bar y)^T d.
$
Combining this inequality with~\eqref{ineq_stat_proof} and~\eqref{alpha_proof}, we get
$
\bigl\|[-\nabla \varphi(\bar y)]_{T(\bar y)}\bigr\| < 2\epsilon(\bar m-1) (2L + \gamma).
$
To conclude the proof, we thus have to show that
\begin{equation}\label{upper_bound_g}
\max_{y \in \Delta_{\bar m-1}} -\nabla \varphi(\bar y)^T (y - \bar y) \le \sqrt 2 \bigl\|[-\nabla \varphi(\bar y)]_{T(\bar y)}\bigl\|.
\end{equation}
Since, by polar decomposition, every vector $v \in \R^{\bar m}$ can be written as $v = v_{T(\bar y)} + v_{N(\bar y)}$ (see, e.g.,~\cite{zarantonello1971projections})
we have $-\nabla \varphi(\bar y) = [-\nabla \varphi(\bar y)]_{T(\bar y)} + [-\nabla \varphi(\bar y)]_{N(\bar y)}$.
Therefore, for any $y \in \Delta_{\bar m-1}$ we can write
\begin{equation}\label{upper_bound_g_1}
-\nabla \varphi(\bar y)^T (y - \bar y) = [-\nabla \varphi(\bar y)]_{T(\bar y)}^T (y-\bar y) + [-\nabla \varphi(\bar y)]_{N(\bar y)}^T (y-\bar y).
\end{equation}
In order to upper bound the right-hand side term of the above inequality, we first write
\begin{equation}\label{upper_bound_g_2}
[-\nabla \varphi(\bar y)]_{T(\bar y)}^T (y-\bar y) \le \bigl\|[-\nabla \varphi(\bar y)]_{T(\bar y)}\bigr\| \norm{y-\bar y} \le \sqrt 2 \bigl\|[-\nabla \varphi(\bar y)]_{T(\bar y)}\bigl\|,
\end{equation}
where the last inequality follows from the fact that both $y$ and $\bar y$ belong to $\Delta_{\bar m-1}$.
Moreover, we have that $y-\bar y \in T(\bar y)$.
%This follows from definition~\eqref{lemma:tan_cone}, using again the fact that both $y$ and $\bar y$ belong to $\Delta_{n-1}$,
%which implies that $e^T(y-\bar y) = e^T y - e^T \bar y~=~0$ and $y_i - \bar y_i \ge 0$ if $\bar y_i = 0$.
Therefore, from the definition of the tangent cone, we also have that
\begin{equation}\label{upper_bound_g_3}
[-\nabla \varphi(\bar y)]_{N(\bar y)}^T (y-\bar y) \le 0.
\end{equation}
From~\eqref{upper_bound_g_1}, \eqref{upper_bound_g_2} and~\eqref{upper_bound_g_3}, we conclude that
\[
-\nabla \varphi(\bar y)^T (y - \bar y) \le \sqrt 2 \bigl\|[-\nabla \varphi(\bar y)]_{T(\bar y)}\bigl\|, \quad \forall\ y \in \Delta_{\bar m-1},
\]
that is~\eqref{upper_bound_g} holds and the result is obtained.
\end{proof}

\section{Optimize, Refine \& Drop (ORD) Algorithm}
In principle, we might use barycentric coordinates to represent the feasible set of Problem~\eqref{prob}, thus obtaining a new problem of the form given in~\eqref{prob_sim0} that might be solved by \DFSIMPL\ (or any other solver for linearly constrained optimization). Unfortunately,
since the number of variables in Problem~\eqref{prob_sim0} is the same as the number of atoms in $\mathcal A$, when $|\mathcal A|$ increases
(keep in mind that this is often the case in our context), it gets hard to obtain a reasonable solution within the given budget of function evaluations. We further notice that
in our context good points usually lie in small dimensional faces of the feasible set (i.e., only a small number of atoms is needed to assemble those points). This is the reason why we propose an inner approximation scheme to tackle the problem.

\begin{algorithm}[H]
\caption{\texttt{Optimize, Refine \& Drop (ORD) Algorithm}}
\label{alg:general}
\begin{algorithmic}
{\footnotesize
\par\vspace*{0.1cm}
\item[]\hspace*{-0.1truecm}$\,\,\,1$\hspace*{0.1truecm} Choose $\{\epsilon^k\} \searrow 0$, $\mathcal A^0 \subseteq \mathcal A$, $a_{i_0} \in \mathcal A^0$,
                                  set $x^0 = a_{i_0}$, $y^0 = e_{i_0} \in \R^{|\mathcal A^0|}$, $\hat \mu^0 \in (0,1)$, \\
                                  \hspace*{-0.1truecm}$\,\,\,\,\,\,$\hspace*{0.1truecm} $\gamma > 0$ and $\theta \in (0,1)$
\item[]\hspace*{-0.1truecm}$\,\,\,2$\hspace*{0.1truecm} For $k=0,1,\ldots$
\vspace*{0.2truecm}
\item[]$\,\,\, $\hspace*{0.65truecm} \textbf{Optimize Phase}
\item[]\hspace*{-0.1truecm}$\,\,\,3$\hspace*{0.9truecm} Let $A^k$ be the matrix with the atoms in $\mathcal A^k$ as columns (so that $x^k = A^k y^k$)
\item[]\hspace*{-0.1truecm}$\,\,\,4$\hspace*{0.9truecm} Run \DFSIMPL\ from $y^k$ to compute an approximate solution $\bar y^k$ of \\
                                   \hspace*{-0.1truecm}$\,\,\,\,\,\,$\hspace*{0.9truecm} Problem~\eqref{eqprobconv} with tolerance $\epsilon^k$
\item[]\hspace*{-0.1truecm}$\,\,\,5$\hspace*{0.9truecm} Set $\bar x^k=A^k \bar y^k$
\vspace*{0.2truecm}
\item[]$\,\,\, $\hspace*{0.65truecm} \textbf{Refine Phase}
\item[]\hspace*{-0.1truecm}$\,\,\,6$\hspace*{0.9truecm} If there exists an index $i_k \in \{1,\ldots,m\}$ and a scalar $\mu^k \in [\hat \mu^k,1]$ such that
                                  \[
                                  f(\bar x^k + \mu^k (a_{i_k} - \bar x^k)) \le f(\bar x^k) - \gamma (\mu^k)^2, \quad a_{i_k} \in {\mathcal A}\setminus {\mathcal A}^k,
                                  \]
                                  \hspace*{-0.1truecm}$\,\,\,\,\,\,$\hspace*{0.9truecm} then set $x^{k+1} = \bar x^k + \mu^k (a_{i_k} - \bar x^k)$,
                                  $\mathcal R^k = \{a_{i_k}\}$ and $\hat \mu^{k+1} = \hat \mu^k$
\item[]\hspace*{-0.1truecm}$\,\,\,7$\hspace*{0.9truecm} Else set $x^{k+1} = \bar x^k$, $\mathcal R^k = \emptyset$ and $\hat \mu^{k+1} = \theta \hat \mu^k$
\vspace*{0.2truecm}
\hspace*{0.8truecm}
\item[]$\,\,\, $\hspace*{0.65truecm} \textbf{Drop Phase}
\item[]\hspace*{-0.1truecm}$\,\,\,8$\hspace*{0.9truecm} Choose a subset $\mathcal D^k \subseteq \{a\in \mathcal{A}^k \text{ such that } a=A^k e_h \text{ and } \bar y^k_h=0\}$
\vspace*{0.25truecm}
\item[]\hspace*{-0.1truecm}$\,\,\,9$\hspace*{0.9truecm} Let $\mathcal A^{k+1} = \mathcal A^k \cup \mathcal R^k \setminus \mathcal D^k$,
                             and set $y^{k+1} \in \Delta_{|\mathcal A^{k+1}|-1}$ such that\\
                             \hspace*{-0.1truecm}$\,\,\,\,\,\,$\hspace*{0.9truecm} $\displaystyle{x^{k+1} = \sum_{a_i \in \mathcal A^{k+1}} a_i y^{k+1}_i}$
\item[]\hspace*{-0.1truecm}$10$\hspace*{0.1truecm} End for
\par\vspace*{0.1cm}
}
\end{algorithmic}
\end{algorithm}

At a given iteration $k$, our method considers a reduced problem by approximating the set $\mathcal M$ with the convex hull of a set ${\mathcal A}^k \subseteq \mathcal A$, and tries to suitably improve this description by including/removing atoms according to some given rule. We can now describe in depth the three main phases that characterize  our approach.

Let $A^k$ be the matrix whose columns are the atoms in $\mathcal A^k$.
First, in the \textit{Optimize Phase}, we use \DFSIMPL\ to compute an approximate solution of the following reduced problem:
\begin{equation}\label{eqprobconv}
\min_{y\in \Delta_{|\mathcal A^k|-1}} \varphi^k(y),
\end{equation}
where $\varphi^k(y) = f(A^k y)$. In particular, we run \DFSIMPL\ on Problem~\eqref{eqprobconv} until a given tolerance $\epsilon^k$ is reached, according to the stopping condition discussed in Subsection~\ref{subsec:stopping_cond_alg_basic}.
% Namely, Algorithm~\ref{alg:basic} is stopped at the end at the first iteration $l \in \mathcal U$ such that
% \begin{equation}\label{stop_alg_basic}
% \hat \alpha^l_{\text{max}} \le \epsilon^k,
% \end{equation}
% where $\mathcal U$ is the subset of unsuccessful iterations for Algorithm~\ref{alg:basic}, as defined in Subsection~\ref{subsec:add_stat_res},
% and $\hat \alpha^l_{\text{max}}$ is defined as in the statement of Theorem~\ref{th:error_stat}.

In the second phase, the so-called \textit{Refine Phase}, we try to get a better inner description of $\mathcal{M}$ by choosing an atom $a_{i_k} \in \mathcal A \setminus \mathcal A^k$, with $i_k \in \{1,\ldots,m\}$, that guarantees improvement of the objective value (we use  $\mathcal R^k$ to indicate the set that, if non-empty, is a singleton composed by the atom to be added to $\mathcal A^k$).
In practice, we randomly pick the atoms in $\mathcal A \setminus \mathcal A^k$, with no repetition, and we stop when we find one satisfying a sufficient decrease condition.

Finally, in the last phase (\textit{Drop Phase}),
we  get rid of some atoms in ${\mathcal A}^k$ thanks to a simple selection rule
(we will use the notation $\mathcal D^k$ to indicate the set of atoms to be removed from $\mathcal A^k$).
This tool enables us to keep the dimension of the reduced problem small enough along the iterations.

The detailed scheme is reported in Algorithm~\ref{alg:general}. We would like to notice that the parameters $\gamma$ and $\theta$ can be different from those used in \DFSIMPL.

We first introduce suitable optimality conditions for \eqref{prob} that will be exploited in the theoretical analysis of our algorithmic framework.

\begin{proposition}\label{prop:stat_general}
A feasible point $x^*$ of Problem~\eqref{prob} is stationary if and only~if
\[
\nabla f(x^*)^T (a - x^*) \ge 0, \quad \forall\ a \in \mathcal A.
\]
\end{proposition}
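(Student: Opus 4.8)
The plan is to rely on the standard first-order characterization of stationarity for minimizing a continuously differentiable function over a convex set, and then exploit that $\mathcal M = \conv(\mathcal A)$ to reduce the condition from all of $\mathcal M$ to the finitely many atoms. Recall that $x^*$ feasible for~\eqref{prob} is stationary precisely when no feasible direction yields a negative directional derivative, i.e.
\[
\nabla f(x^*)^T (x - x^*) \ge 0, \quad \forall\ x \in \mathcal M.
\]
The proof will show that this condition over the whole feasible set is equivalent to the stated condition over $\mathcal A$ alone.

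For the forward implication, I would argue that it is immediate: since $\mathcal A \subseteq \mathcal M$, every atom $a$ is a feasible point, and hence taking $x = a$ in the displayed inequality above directly yields $\nabla f(x^*)^T (a - x^*) \ge 0$ for all $a \in \mathcal A$.

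For the reverse implication, I would take an arbitrary $x \in \mathcal M$ and use the convex-hull representation $x = \sum_{i=1}^m \lambda_i a_i$ with $\lambda_i \ge 0$ and $\sum_{i=1}^m \lambda_i = 1$. The key step is to rewrite the directional derivative using this representation together with the normalization of the weights, namely $x^* = \sum_{i=1}^m \lambda_i x^*$, so that
\[
\nabla f(x^*)^T (x - x^*) = \sum_{i=1}^m \lambda_i \, \nabla f(x^*)^T (a_i - x^*).
\]
By the assumed atom condition each summand $\nabla f(x^*)^T (a_i - x^*)$ is non-negative, and since the $\lambda_i$ are non-negative the whole sum is non-negative, giving $\nabla f(x^*)^T (x - x^*) \ge 0$ for every $x \in \mathcal M$, i.e.\ stationarity.

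There is essentially no hard technical obstacle here; the argument is a direct consequence of the linearity of the gradient inner product and the convex-combination structure of $\mathcal M$. The only point requiring care is to state explicitly the stationarity notion being used for~\eqref{prob} (the variational inequality over the convex set $\mathcal M$), so that the equivalence is well posed; once that is fixed, the reduction to atoms follows from convexity in one line in each direction.
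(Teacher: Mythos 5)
Your proof is correct. The paper states this proposition without proof, treating it as the standard first-order optimality characterization over a convex feasible set; your argument (atoms are feasible points for the forward direction, and linearity of $v \mapsto \nabla f(x^*)^T v$ over convex combinations for the reverse) is exactly the standard reasoning the paper implicitly relies on, and in fact the same convex-combination manipulation appears explicitly later in the paper, e.g.\ in the proof of Proposition~\ref{prop:atoms_active_set}.
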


Now, we prove that the stepsize used to define the sufficient decrease in the atom selection of the second phase (see line 6 of Algorithm \ref{alg:general}) goes to zero. This result will be needed in the global convergence analysis of the method.

\begin{proposition}\label{prop:lim_hat_mu}
Let $\{x^k\}$ be a sequence of points produced by Algorithm~\ref{alg:general}.
Then,
\[
\lim_{k \to \infty} \hat \mu^k = 0.
\]
\end{proposition}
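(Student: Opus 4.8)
The plan is to track how $\hat\mu^k$ is updated across iterations and to rule out the only scenario in which it could fail to vanish. By inspection of the Refine Phase (lines 6--7 of Algorithm~\ref{alg:general}), at every iteration $\hat\mu^{k+1}$ equals $\hat\mu^k$ when a suitable atom is found (successful refinement, line 6) and equals $\theta\hat\mu^k$ otherwise (line 7). Since $\theta \in (0,1)$ and $\hat\mu^0 \in (0,1)$, an easy induction gives $\hat\mu^k > 0$ for all $k$, and the sequence $\{\hat\mu^k\}$ is monotonically non-increasing and bounded below by $0$, hence convergent. It is therefore enough to show that the unsuccessful case (line 7) occurs infinitely often: each such occurrence multiplies $\hat\mu^k$ by $\theta$, so infinitely many occurrences force $\hat\mu^k \to 0$.

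First I would establish that the objective values $\{f(x^k)\}$ form a non-increasing sequence. In the Optimize Phase, \DFSIMPL\ only accepts moves satisfying a sufficient-decrease test, so $\varphi^k(\bar y^k) \le \varphi^k(y^k)$; recalling that $\varphi^k(y) = f(A^k y)$ with $x^k = A^k y^k$ and $\bar x^k = A^k \bar y^k$, this reads $f(\bar x^k) \le f(x^k)$. In the Refine Phase, either $f(x^{k+1}) \le f(\bar x^k) - \gamma(\mu^k)^2 \le f(\bar x^k)$ in the successful case, or $f(x^{k+1}) = f(\bar x^k)$ in the unsuccessful one. Combining, $f(x^{k+1}) \le f(x^k)$ for every $k$. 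Moreover, every iterate lies in $\mathcal M = \conv(\mathcal A)$: the point $\bar x^k = A^k \bar y^k$ is a convex combination of atoms since $\bar y^k \in \Delta_{|\mathcal A^k|-1}$, and in the successful case $x^{k+1} = (1-\mu^k)\bar x^k + \mu^k a_{i_k}$ with $\mu^k \in (0,1]$ is again a convex combination of points of $\mathcal M$. As $\mathcal M$ is compact and $f$ is continuous, $f$ is bounded below on $\mathcal M$.

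Then I would argue by contradiction. Suppose $\hat\mu^k \not\to 0$; by the monotonicity noted above, $\hat\mu^k$ decreases to some limit $\bar\mu > 0$, so in particular $\hat\mu^k \ge \bar\mu$ for all $k$, and the unsuccessful case can occur only finitely many times (otherwise the factor $\theta$ would drive $\hat\mu^k$ below $\bar\mu$). Hence there is $\bar k$ such that the successful case holds for every $k \ge \bar k$, and in each such iteration the stepsize satisfies $\mu^k \ge \hat\mu^k \ge \bar\mu$. The sufficient-decrease inequality at line 6 then gives $f(x^k) - f(x^{k+1}) \ge \gamma(\mu^k)^2 \ge \gamma\bar\mu^2 > 0$ for all $k \ge \bar k$. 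Summing from $\bar k$ onward yields $f(x^{\bar k}) - f(x^k) \ge (k-\bar k)\gamma\bar\mu^2 \to \infty$, i.e.\ $f(x^k) \to -\infty$, contradicting the boundedness of $f$ below on the compact set $\mathcal M$. Therefore $\hat\mu^k \to 0$.

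The main obstacle is precisely ruling out the second scenario, namely that the Refine Phase eventually always succeeds. The argument hinges on coupling the two distinct decreases — the one produced internally by \DFSIMPL\ in the Optimize Phase and the sufficient decrease imposed in the Refine Phase — into a single non-increasing, bounded-below sequence $\{f(x^k)\}$, and then exploiting that a persistent successful refinement with stepsize uniformly bounded away from zero would extract an unbounded cumulative decrease, which is impossible over the compact feasible set.
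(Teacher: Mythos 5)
Your proof is correct and rests on the same ingredients as the paper's: the partition of iterations into successful/unsuccessful refinements, the chain $f(x^{k+1}) \le f(\bar x^k) - \gamma(\mu^k)^2 \le f(x^k) - \gamma(\mu^k)^2$ with $\mu^k \ge \hat\mu^k$, and boundedness of $f$ on the compact feasible set. The only difference is organizational — you argue by contradiction that the unsuccessful case must occur infinitely often, whereas the paper runs a direct case analysis on whether the set of successful iterations is infinite and extracts $\mu^k \to 0$ from the convergence of $\{f(x^k)\}$ — but the substance is the same.
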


\begin{proof}
We partition the iterations into two subsets $K_1$ and $K_2$ such that
\begin{equation}\label{K1K2}
\hat \mu^{k+1} = \hat \mu^k \Leftrightarrow k \in K_1\quad \text{and} \quad  \hat \mu^{k+1} = \theta \hat \mu^k \Leftrightarrow k \in K_2,
\end{equation}
that is, the iterations in $K_1$ are those where the test at line~6 of Algorithm~\ref{alg:general} is satisfied,
while the iterations in $K_2$ are those where that test is not satisfied.
From line~6 of Algorithm~\ref{alg:general}, for all $k \in K_1$ we have that
\begin{equation*}
f(x^{k+1}) = f(\bar x^k + \mu^k (a_{i_k} - \bar x^k)) \le f(\bar x^k) - \gamma (\mu^k)^2 \le f(x^k) - \gamma (\mu^k)^2,
\end{equation*}
where $f(\bar x^k) \le f(x^k)$ in the last inequality follows from the fact that $\bar x^k = A^k \bar y^k$ and $\bar y^k$ is obtained from \DFSIMPL\ with a starting point $y^k$ satisfying $x^k = A^k y^k$.
Therefore, if $K_1$ is infinite, using the fact that $f$ is continuous and the feasible set is bounded
it follows that $\{f(x^k)\}$ converges and
\begin{equation}\label{lim_mu_k_subseq}
\lim_{\substack{k \to \infty \\ k \in K_1}} \mu^k = 0.
\end{equation}
Since $\hat \mu^k \le \mu^k$ for all $k \in K_1$, it follows that
$\{\hat \mu^k\}_{K_1} \to 0$.
Taking into account that $\hat \mu^{k+1} = \theta \hat \mu^k$ for all $k \in K_2$, we obtain that the desired holds if $K_1$ is infinite.

If $K_1$ is finite, there exists $\bar k$ such that $k \in K_2$ for all $k \ge \bar k$.
For each $k \in K_2$, define $l_k$ as the largest iteration index such that $l_k < k$ and $l_k \in K_1$ (if it does not exist, we let $l_k = 0$).
Therefore, there are $k-l_k$ iterations belonging to $K_2$ between $l_k$ and $k$, implying that
$
\hat \mu^{k+1} \le \theta^{k-l_k} \, \hat \mu^{l_k+1}.
$
Using the fact that $l_k$ is bounded from above (since $K_1$ is finite),
we have that $\lim_{\substack{k \to \infty \\ k \in K_2}} \theta^{k-l_k} = 0$. Therefore,
$\lim_{\substack{k \to \infty \\ k \in K_2}} \hat \mu^{k+1} = 0$ and the desired result is obtained.
\end{proof}

We thus get the following useful corollary.

\begin{corollary}\label{corol:lim_x}
Let $\{x^k\}$ be a sequence of points produced by Algorithm~\ref{alg:general}. Then,
\[
\lim_{k \to \infty} \|x^{k+1} - \bar x^k\| = 0.
\]
\end{corollary}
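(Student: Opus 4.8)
The plan is to reuse the partition of the iterations into $K_1$ and $K_2$ introduced in~\eqref{K1K2} and to bound $\|x^{k+1}-\bar x^k\|$ separately on each subset. For $k \in K_2$ the test at line~6 of Algorithm~\ref{alg:general} fails, so the algorithm sets $x^{k+1} = \bar x^k$ and hence $\|x^{k+1}-\bar x^k\| = 0$ trivially. For $k \in K_1$ the test succeeds and $x^{k+1} = \bar x^k + \mu^k(a_{i_k}-\bar x^k)$, which gives
\[
\|x^{k+1}-\bar x^k\| = \mu^k\,\|a_{i_k}-\bar x^k\|.
\]

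First I would bound the factor $\|a_{i_k}-\bar x^k\|$ uniformly in $k$. Since $a_{i_k}\in\mathcal A\subseteq\mathcal M$ and $\bar x^k = A^k\bar y^k\in\mathcal M$, both points lie in the bounded feasible set $\mathcal M$, so $\|a_{i_k}-\bar x^k\|\le M$, where $M$ denotes the (finite) diameter of $\mathcal M$. Consequently, for every $k\in K_1$,
\[
\|x^{k+1}-\bar x^k\| \le M\,\mu^k.
\]

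Next I would invoke the information already obtained in the proof of Proposition~\ref{prop:lim_hat_mu}. If $K_1$ is finite, then $k\in K_2$ for all large $k$ and the norm is eventually $0$, so the limit is $0$. If $K_1$ is infinite, then~\eqref{lim_mu_k_subseq} gives $\lim_{k\to\infty,\,k\in K_1}\mu^k = 0$, whence $\|x^{k+1}-\bar x^k\|\to 0$ along $K_1$ by the bound above; combined with the fact that the norm vanishes identically on $K_2$, this yields $\lim_{k\to\infty}\|x^{k+1}-\bar x^k\|=0$.

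Since the argument only requires a uniform diameter bound on $\mathcal M$ and the already-established convergence $\mu^k\to 0$ on $K_1$, there is no genuine obstacle here. The only point deserving care is recording that $\mu^k$ is defined precisely on the iterations in $K_1$, and that the sufficient-decrease inequality exploited in Proposition~\ref{prop:lim_hat_mu} is exactly what forces $\mu^k\to0$ on that subset.
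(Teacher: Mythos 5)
Your proof is correct and follows essentially the same route as the paper: partition the iterations into $K_1$ and $K_2$ as in~\eqref{K1K2}, observe that $x^{k+1}=\bar x^k$ on $K_2$, and use~\eqref{lim_mu_k_subseq} on $K_1$. The only difference is that you make explicit the bound $\|x^{k+1}-\bar x^k\|=\mu^k\|a_{i_k}-\bar x^k\|\le M\mu^k$ via the diameter of $\mathcal M$, a step the paper leaves implicit.
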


\begin{proof}
As in the proof of Proposition~\ref{prop:lim_hat_mu},
let us define $K_1$ and $K_2$ satisfying~\eqref{K1K2}.
If $K_1$ is a finite set, from the instructions of the algorithm we have that an iteration $\tilde k$ exists such that $x^{k+1} = \bar x^k$ for all $k \ge \tilde k$ and the desired result is obtained.
If $K_1$ is an infinite set, by the same arguments used in the proof of Proposition~\ref{prop:lim_hat_mu} we get~\eqref{lim_mu_k_subseq}, that is,
\[
\lim_{\substack{k \to \infty \\ k \in K_1}} \|x^{k+1} - \bar x^k\| = 0,
\]
and the desired result is obtained since $x^{k+1} = \bar x^k$ for all $k \in K_2$.
\end{proof}

In the next theorem,  we prove global convergence of the proposed algorithm.

\begin{theorem}
Let $\{x^k\}$ be a sequence of points produced by Algorithm~\ref{alg:general}.
Then, (at least) one limit point $x^*$ exists such that $x^*$ is stationary for Problem~\eqref{prob}.
\end{theorem}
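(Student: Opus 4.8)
The plan is to exhibit one limit point $x^*$ satisfying the stationarity condition of Proposition~\ref{prop:stat_general}, i.e.\ $\nabla f(x^*)^T(a-x^*)\ge 0$ for every $a\in\mathcal A$. The key observation is that the two active phases of Algorithm~\ref{alg:general} cover the two kinds of atoms: the Optimize Phase forces approximate stationarity over the atoms currently in $\mathcal A^k$, whereas a \emph{failed} Refine test certifies that no atom of $\mathcal A\setminus\mathcal A^k$ provides descent. Both certificates are simultaneously available only at iterations where the Refine test fails, so I first partition the iterations into $K_1$ and $K_2$ as in \eqref{K1K2} and argue that $K_2$ is infinite: otherwise all large $k$ lie in $K_1$, so $\hat\mu^k$ is eventually a constant $c>0$, while the decrease $f(x^k)-f(x^{k+1})\ge\gamma(\mu^k)^2$ with $\mu^k\ge\hat\mu^k=c$ would be summable over infinitely many iterations, contradicting boundedness of $f$ on the compact set $\mathcal M$ (here $f(\bar x^k)\le f(x^k)$ by monotonicity of \DFSIMPL, as in Proposition~\ref{prop:lim_hat_mu}).

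Since $\mathcal M$ is compact, $\{\bar x^k\}_{K_2}$ is bounded, so I extract an infinite $\hat K\subseteq K_2$ with $\bar x^k\to x^*$. Because $x^{k+1}=\bar x^k$ for $k\in K_2$ (line~7), also $x^{k+1}\to x^*$ along $\hat K$, so $x^*$ is a genuine limit point of $\{x^k\}$. Along $\hat K$ I will use $\epsilon^k\searrow 0$ and $\hat\mu^k\to 0$ (Proposition~\ref{prop:lim_hat_mu}).

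I then check stationarity atom by atom. Fix $a\in\mathcal A$ and $k\in\hat K$. If $a\in\mathcal A^k$, Theorem~\ref{th:error_stat} applied to \eqref{eqprobconv}, tested at the vertex $e_h$ corresponding to $a$ and combined with $\nabla\varphi^k(\bar y^k)=(A^k)^T\nabla f(\bar x^k)$ and $A^k(e_h-\bar y^k)=a-\bar x^k$, yields $\nabla f(\bar x^k)^T(a-\bar x^k)\ge -C_k\epsilon^k$; the constant $C_k=2\sqrt2(|\mathcal A^k|-1)(2L_k+\gamma)$ is bounded uniformly in $k$, since $|\mathcal A^k|\le m$ and the Lipschitz constant $L_k$ of $\nabla\varphi^k$ is controlled by $\|A^k\|^2$, which is bounded because the atoms lie in the bounded set $\mathcal A$. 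If instead $a\notin\mathcal A^k$, then $k\in K_2$ means the Refine test failed for this atom, in particular with the admissible choice $\mu=\hat\mu^k\in[\hat\mu^k,1]$ (note $\hat\mu^k\le\hat\mu^0<1$), giving $f(\bar x^k+\hat\mu^k(a-\bar x^k))>f(\bar x^k)-\gamma(\hat\mu^k)^2$; the mean value theorem and division by $\hat\mu^k>0$ then produce $\nabla f(\zeta^k)^T(a-\bar x^k)>-\gamma\hat\mu^k$ for some $\zeta^k$ on the segment joining $\bar x^k$ to $\bar x^k+\hat\mu^k(a-\bar x^k)$. In both cases the evaluation point converges to $x^*$ (for $\zeta^k$ because $\hat\mu^k\to 0$) and the right-hand error tends to $0$, so continuity of $\nabla f$ gives $\nabla f(x^*)^T(a-x^*)\ge 0$; as $a$ was arbitrary, Proposition~\ref{prop:stat_general} shows $x^*$ is stationary for \eqref{prob}.

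The main obstacle is the bookkeeping that makes both certificates valid at the \emph{same} limit point: one must work on $\hat K\subseteq K_2$ (so the outside-atom inequality is legitimate), prove $K_2$ infinite, and ensure the error-bound constant $C_k$ stays bounded as $\mathcal A^k$ changes---this is exactly where drawing atoms from a fixed bounded set is used. If one prefers to assume only $f\in C^1$ (so that Theorem~\ref{th:error_stat} is unavailable), the inside-atom inequality can instead be recovered by passing to the limit in the unsuccessful line-search steps at the final \DFSIMPL\ iterate, as in the proof of Theorem~\ref{th:conv_basic}.
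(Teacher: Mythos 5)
Your proof is correct in substance and reaches the same two certificates the paper uses (approximate stationarity over $\conv(\mathcal A^k)$ from the Optimize Phase, nonnegative directional derivatives toward the atoms of $\mathcal A\setminus\mathcal A^k$ from a failed Refine test), but it assembles them differently. For the ``outside'' atoms your argument is essentially the paper's, and in fact slightly more careful: you instantiate the failed Refine test at the admissible stepsize $\hat\mu^k$, whereas the paper writes the inequality with a $\mu^k$ that is not formally defined on failed iterations. For the ``inside'' atoms the paper does \emph{not} invoke Theorem~\ref{th:error_stat}; it first passes to a subsequence on which $\mathcal A^k\equiv\bar{\mathcal A}$ is constant and $\bar y^k\to y^*$, then takes limits in the unsuccessful line-search inequalities at the final \DFSIMPL\ iterate and uses the fact that the feasible directions at $y^*$ generate the tangent cone, so only continuity of $\nabla f$ is needed. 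Your route through Theorem~\ref{th:error_stat} is cleaner bookkeeping (no need to freeze $\mathcal A^k$, since the bound is transferred per iteration to the $x$-space where everything converges, and $C_k$ is indeed uniformly bounded), but it silently strengthens the hypotheses of the theorem: it requires $\nabla f$ Lipschitz, which the statement does not assume. You flag this and point to the Theorem~\ref{th:conv_basic}-style fallback, which is exactly what the paper does; just note that to execute that fallback rigorously you must also extract a subsequence with $\mathcal A^k$ and the final direction set constant (finiteness of $\mathcal A$ makes this free), otherwise $\bar\varphi$ and the space in which $\bar y^k$ lives change along the subsequence. Your argument that $K_2$ is infinite is a valid alternative to simply citing Proposition~\ref{prop:lim_hat_mu} (on $K_1$ one has $\hat\mu^{k+1}=\hat\mu^k$, so if $K_2$ were finite $\hat\mu^k$ could not vanish).
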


\begin{proof}
Using Proposition~\ref{prop:lim_hat_mu}, the fact that the feasible set of every reduced Problem~\eqref{eqprobconv} is bounded and the fact that $\mathcal A$ is a finite set,
there exists an infinite subset of iterations $K \subseteq \mathcal \{0,1,\ldots\}$ such that
$$
 \mathcal A^k = \bar {\mathcal A}, \quad \forall\ k \in K; \quad
 \lim_{\substack{k \to \infty \\ k \in K}} \bar y^k = y^*;  \quad
 \mu^{k+1} < \mu^k, \ \forall\ k \in K.
$$
Since $\mathcal A^k$ is constant for all $k \in K$, also the matrix $A^k$ and the function $\varphi^k$ are the same for all $k \in K$,
and let us denote them by $\bar A$ and $\bar \varphi$, respectively.
Hence, we also have
\[
\lim_{\substack{k \to \infty \\ k \in K}} x^k = \bar A y^* = x^*.
\]
Taking into account Proposition~\ref{prop:stat_general}, to obtain the desired result we have to show that
\begin{subequations}
\begin{align}
\nabla f(x^*)^T (a - x^*) \ge 0, \quad & \forall\ a \in \bar {\mathcal A}, \label{stat_proof_1} \\
\nabla f(x^*)^T (a - x^*) \ge 0, \quad & \forall\ a \in \mathcal A \setminus \bar {\mathcal A}. \label{stat_proof_2}
\end{align}
\end{subequations}

To prove~\eqref{stat_proof_1}, for all iterations $k \in K$ consider the points $\bar y^k$,
which are returned by \DFSIMPL\ when the stopping condition~\eqref{stopping_cond_alg_basic} is satisfied.
Since the set of directions used in \DFSIMPL\ is finite, without loss of generality we can assume that, for all $k \in K$,
the set of feasible directions at $\bar y^k$ used in the last iteration of \DFSIMPL\ is the same for all $k \in K$. Let us denote this set of directions by $D$.
Since the stopping condition~\eqref{stopping_cond_alg_basic} requires that
no progress is made along any direction,
from the instructions of \DFSIMPL\ we have that, at any iteration $k \in K$,
\[
\bar \varphi(\bar y^k + \alpha d) > \bar \varphi(\bar y^k) - \gamma \alpha^2, \quad \forall\ d \in D,
\]
with
$
0 < \alpha \le \epsilon^k.
$
By the mean value theorem,
$\bar \varphi(\bar y^k + \alpha d) - \bar \varphi(\bar y^k) = \alpha \nabla \bar \varphi(\bar y^k + \eta^k \alpha d)^T d$,
for some  $\eta^k \in (0,1)$. Then, for any $k \in K$,
\[
\nabla \bar \varphi(\bar y^k + \eta^k \alpha d)^T d \ge -\gamma \alpha \ge -\gamma \epsilon^k, \quad \forall\ d \in D.
\]
Using the fact that $\eta^k \in (0,1)$, $\alpha \le \epsilon^k$ and $\epsilon^k \to 0$,
we have that
\[
\lim_{\substack{k \to \infty \\ k \in K}} (\bar y^k + \eta^k \alpha d) = y^*, \quad \forall\ d \in D.
\]
Therefore, from the continuity of $\nabla \bar \varphi$ it follows that
\begin{equation}\label{ineq_proof_conv}
\nabla \bar \varphi(y^*)^T d \ge 0, \quad \forall\ d \in D.
\end{equation}
Now consider any point $y \in \Delta_{|\bar {\mathcal A}|-1}$. Reasoning as in the last part of the proof of Theorem~\ref{th:error_stat},
we have that $y-y^* \in T(y^*)$.
Moreover,
%by the same arguments given in the proof of Lemma~\ref{lemma:gen_tan_come} (with minor changes),
it is easy to verify that the set $D^* = \{d \in D \text{ such that $d$ is feasible at $y^*$}\}$ is a set of generators for $T(y^*)$.
Therefore, denoting by $d_1,\ldots,d_{|D^*|}$ the directions that form the set $D^*$, we have that
$y-y^* = \sum_{i=1}^{|D^*|} \lambda_i d_i$, with $\lambda_i \ge 0$, $i=1,\ldots,|D|$.
Taking into account~\eqref{ineq_proof_conv}, it follows that
\[
\nabla \bar \varphi(y^*)^T (y-y^*) = \sum_{i=1}^{|D^*|}\lambda_i \nabla \bar \varphi(y^*)^T d_i \ge 0, \quad \forall\ y \in \Delta_{|\bar {\mathcal A}|-1}.
\]
Then, for all $y \in \Delta_{|\bar {\mathcal A}|-1}$ we have that
\begin{equation*}
\begin{split}
0 & \le \nabla \bar \varphi(y^*)^T (y - y^*) = [\bar A^T \nabla f(\bar A y^*)]^T (y - y^*) = \nabla f(\bar A y^*)^T [\bar A(y - \bar y^*)] \\
                               & = \nabla f(x^*)^T (\bar Ay - x^*).
\end{split}
\end{equation*}
Since $\conv(\bar A) = \{x \in \R^n \colon x = \bar A y, y \in \Delta_{|\mathcal A|-1}\}$,
we obtain that
\[
\nabla f(x^*)^T (x - x^*) \ge 0, \quad \forall\ x \in \conv(\bar A),
\]
implying that~\eqref{stat_proof_1} holds.

To prove~\eqref{stat_proof_2}, note that,
from the instructions of the algorithm, we have that $\mu^{k+1} < \mu^k$ only when the test at line~6 is not satisfied.
Hence, for all $k \in K$,
\[
f(\bar x^k + \mu^k (a - \bar x^k)) > f(\bar x^k) - \gamma (\mu^k)^2, \quad \forall\ a \in {\mathcal A} \setminus \bar {\mathcal A}.
\]
By the mean value theorem, for any $a \in {\mathcal A} \setminus \bar {\mathcal A}$  we can write
\[
f(\bar x^k + \mu^k (a - \bar x^k)) - f(\bar x^k) = \mu^k \nabla f(\bar x^k + \eta^k \mu^k (a - \bar x^k))^T (a - \bar x^k),
\]
for some $\eta^k \in (0,1)$.
Therefore,
\[
\nabla f(\bar x^k + \eta^k \mu^k (a - \bar x^k))^T (a - \bar x^k) > - \gamma \mu^k, \quad \forall\ k \in K.
\]
From Proposition~\ref{prop:lim_hat_mu} and the fact that $\eta^k \in (0,1)$, we have that
\[
\lim_{\substack{k \to \infty \\ k \in K}} (\bar x^k + \eta^k \mu^k (a - \bar x^k)) = x^*.
\]
Therefore, taking into account that $\mu^k \to 0$ and that $\nabla f$ is continuous, we obtain
\[
0 \le \lim_{\substack{k \to \infty \\ k \in K}} \nabla f(\bar x^k + \eta^k \mu^k (a - \bar x^k))^T (a - \bar x^k) = \nabla f(x^*)^T (a - x^*).
\]
Since the above relation holds for all $a \in {\mathcal A} \setminus \bar {\mathcal A}$, we finally get~\eqref{stat_proof_2}.
\end{proof}

\section{Identification property of \ORD}\label{sec:identification}
In our problem, every feasible point is expressed as a (not necessarily unique) convex combination of the atoms $a_i \in \mathcal A$.
In this section we show that, under suitable assumptions, some atoms that are not needed to express the optimal solution
are identified and discarded by \ORD\ in a finite number of iterations.
Loosely speaking, from a certain iteration we are guaranteed that the set ${\mathcal A}^k$ does not contain ``useless'' atoms. Before showing this property, we report a useful intermediate result.
%In particular, the first part of the following proposition says that, even if many different ways might exist to express
%any feasible point of Problem~\eqref{prob} as a convex combination of the atoms $a_1,\ldots,a_n$,
%the weight associated to an atom $a_i$ is zero in every possible convex combination expressing a stationary point $x^*$ if a certain condition holds.

\begin{proposition}\label{prop:atoms_active_set}
Let $x^*$ be a stationary point of Problem~\eqref{prob} and let $w^* \in \Delta_{m-1}$ be any vector such that $x^* = Aw^*$.
Then, for every atom $a_i \in \mathcal A$ such that $\nabla f(x^*)^T (a_i - x^*) > 0$, we have that $w^*_i = 0$.
\end{proposition}

\begin{proof}
Consider the reformulation of Problem~\eqref{prob}  in~\eqref{prob_sim}, with
$
\varphi(w)~=~f(Aw).
$
Let $w^*$ be any feasible point of Problem~\eqref{prob_sim} that $x^* = Aw^*$. Since $x^*$ is stationary for Problem~\eqref{prob}
and $\conv(\mathcal A) = \{x \in \R^n \colon x = A w, w \in \Delta_{m-1}\}$, we have that
\[
\nabla f(x^*)^T (A w - x^*) \ge 0, \quad \forall\ w \in \Delta_{m-1}.
\]
Moreover, for all $w \in \Delta_{m-1}$ we can write
\[
\nabla f(x^*)^T (A w - x^*) = [A^T \nabla f(Aw^*)]^T (w - w^*) = \nabla \varphi(w^*)^T (w - w^*).
\]
It follows that $\nabla \varphi(w^*)^T (w - w^*) \ge 0$ for all $w \in \Delta_{m-1}$, that is, $w^*$ is stationary for Problem~\eqref{prob_sim}
and satisfies the following KKT conditions with multipliers $\lambda^* \in \R$ and $v^* \in \R^m$:
\begin{subequations}
\begin{align}
& \nabla \varphi(w^*) - \lambda^* e - v^* = 0, \label{kkt1} \\
& e^T w^* = 1, \label{kkt2} \\
& (v^*)^T w^* = 0, \label{kkt3} \\
& w^* \ge 0, \label{kkt4} \\
& v^* \ge 0 \label{kkt5}.
\end{align}
\end{subequations}
From~\eqref{kkt1} we can write
\begin{equation}\label{kkt_mult}
v^* = \nabla \varphi(w^*) - \lambda^* e,
\end{equation}
and then, by~\eqref{kkt3} we get that
$0 = (v^*)^T w^* = (\nabla \varphi(w^*) - \lambda^* e)^T w^*$. Using~\eqref{kkt2} we obtain that $\lambda^* = \nabla \varphi(w^*)^T w^*$,
which, combined with~\eqref{kkt_mult}, yields to
\[
v^* = \nabla \varphi(w^*) - (\nabla \varphi(w^*)^T w^*)e
\]
So, for all $h = 1,\ldots,m$ we have that
\begin{equation*}
\begin{split}
v^*_h & = \nabla \varphi(w^*)^T(e_h - w^*) = [A^T \nabla f(Aw^*)]^T (e_h -w^*) = \nabla f(x^*)^T (Ae_h - Aw^*) \\
      & = \nabla f(x^*)^T (a_h - x^*).
\end{split}
\end{equation*}
Therefore, if $\nabla f(x^*)^T (a_i - x^*) > 0$ for an atom $a_i \in \mathcal A$, this means that $v^*_i > 0$ and~\eqref{kkt3}, \eqref{kkt4} and~\eqref{kkt5}
yield to $w^*_i = 0$, thus proving the desired result.
\end{proof}

In the next theorem, we assume that $x^k\to x^*$ (this is pretty standard in the analysis of active-set identification properties) and show that, for $k$ sufficiently large,
the atoms satisfying the condition of Proposition~\ref{prop:atoms_active_set}
are not included in $\mathcal A^k$.
To obtain such a result, we  set ${\mathcal D}^k$ as follows:
\begin{equation}\label{dropcond}
\mathcal D^k = \{a\in \mathcal{A}^k \text{ such that } a=A^k e_h \text{ and } \bar y^k_h=0\}.
\end{equation}
%that is, we remove from $A^k$ all the atoms with zero weight in the current point $\bar y^k$.

\begin{theorem}\label{th:active_set}
Let $\{x^k\}$ be a sequence of points produced by Algorithm~\ref{alg:general}, where ${\mathcal D}^k$ is computed as in~\eqref{dropcond}.
Assume that $\displaystyle{\lim_{k \to \infty} x^k = x^*}$. Then, an iteration $\bar k$ exists such that, for all $k \ge \bar k$,
\[
\nabla f(x^*)^T (a - x^*) > 0,\ a \in \mathcal{A} \; \Rightarrow \; a \notin \mathcal A^k.
\]
\end{theorem}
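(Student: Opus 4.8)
The goal is to show that atoms $a$ with $\nabla f(x^*)^T(a-x^*)>0$ are eventually excluded from $\mathcal A^k$. The plan is to combine three facts: the drop rule~\eqref{dropcond} removes any atom whose weight $\bar y^k_h$ is zero after the Optimize Phase; the stationarity error bound from Theorem~\ref{th:error_stat} forces $\bar y^k$ to be increasingly close to stationary for the reduced problem as $\epsilon^k \searrow 0$; and Proposition~\ref{prop:atoms_active_set} characterizes which atoms must receive zero weight at a stationary point. First I would fix an atom $a = a_i$ with $\nabla f(x^*)^T(a_i - x^*) = \sigma > 0$ and argue that, once $a_i$ enters $\mathcal A^k$ (if it ever does), the Optimize Phase assigns it weight $\bar y^k_h = 0$ for all large $k$, so the Drop Phase discards it at that same iteration.

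The central quantitative step is to translate the reduced-problem stationarity measure into a statement about the gradient component along $a_i$. First I would show that for large $k \in K$ (the iterations where $a_i \in \mathcal A^k$), the stationarity error $\max_{y}-\nabla\varphi^k(\bar y^k)^T(y-\bar y^k)$ is at most $C\epsilon^k$ by Theorem~\ref{th:error_stat}. Because $\epsilon^k \searrow 0$ and $x^k = A^k y^k \to x^*$ with $\bar x^k = A^k \bar y^k \to x^*$ as well (via Corollary~\ref{corol:lim_x} and Proposition~\ref{prop:lim_hat_mu}), the reduced iterate $\bar y^k$ is asymptotically stationary, and its limit corresponds to the stationary point $x^*$ of the full problem. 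The key inequality to extract is that the multiplier-type quantity $\nabla f(\bar x^k)^T(a_i - \bar x^k)$ associated with the column $a_i$ of $A^k$ stays bounded below by something like $\sigma/2 > 0$ for large $k$, by continuity of $\nabla f$ and $\bar x^k \to x^*$.

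The bridge to zero weight is the complementarity structure made explicit in the proof of Proposition~\ref{prop:atoms_active_set}: the quantity $\nabla f(\bar x^k)^T(a_i-\bar x^k)$ plays the role of the reduced-cost / KKT multiplier $v^*_h$ for the variable $y_h$. The plan is to show that a strictly positive reduced cost, combined with the near-stationarity bound $C\epsilon^k$, forces $\bar y^k_h = 0$ exactly (not merely small) once $\epsilon^k$ is small enough relative to $\sigma$; here one uses that the stationarity condition~\eqref{kkt} from the first Proposition holds at $\bar y^k$ up to error $C\epsilon^k$, so any index with reduced cost bounded away from the multiplier $\lambda^k$ cannot carry positive weight. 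Having $\bar y^k_h = 0$ then triggers the Drop Phase via~\eqref{dropcond}, giving $a_i \in \mathcal D^k$ and hence $a_i \notin \mathcal A^{k+1}$.

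The main obstacle I anticipate is controlling the \emph{multiplier} $\lambda^k$ of the reduced problem and ensuring it converges to the correct value, so that the gap $\nabla f(\bar x^k)^T(a_i - \bar x^k) - \lambda^k$ stays positive uniformly. The subtlety is that $\bar y^k$ is only approximately stationary and the index set $\mathcal A^k$ may change, so $\lambda^k$ must be handled along a subsequence on which $\mathcal A^k$ is constant (as in the global convergence theorem). A second delicate point is going from ``weight is small'' to ``weight is exactly zero'': this requires that \DFSIMPL\ with stopping tolerance $\epsilon^k$ actually returns an iterate whose zero-weight structure reflects the strict inequality, which is where the exact characterization in the stopping condition~\eqref{stopping_cond_alg_basic} and the sufficient-decrease mechanism along the directions $\pm(e_h - e_{j_k})$ must be invoked carefully to rule out a persistently tiny positive weight on $a_i$.
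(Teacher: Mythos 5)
Your high-level skeleton (zero weight $\Rightarrow$ dropped by~\eqref{dropcond}, with Proposition~\ref{prop:atoms_active_set} characterizing which atoms must carry zero weight) matches the paper, but the two steps you flag as ``delicate'' are exactly where the proof lives, and your proposed tools cannot close them. The central gap is the passage from ``weight is small'' to ``weight is exactly zero.'' The stationarity bound of Theorem~\ref{th:error_stat} (which, incidentally, requires Lipschitz continuity of $\nabla\varphi$, an assumption not made in this theorem) only controls $\max_{y}-\nabla\varphi^k(\bar y^k)^T(y-\bar y^k)$, i.e., an approximate complementarity product; combined with a reduced cost bounded below by $\sigma/2$ it yields at best $\bar y^k_{\hat\imath}=O(\epsilon^k)$, which never triggers the drop rule~\eqref{dropcond} since that rule tests for \emph{exact} zeros. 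The paper never proves the weight is zero directly. Instead it argues by contradiction: if $\bar y^k_{\hat\imath}>0$ along an infinite subsequence (with $\mathcal A^k\equiv\bar{\mathcal A}$ constant), then $e_{\hat\jmath}-e_{\hat\imath}$ is a feasible direction at $\bar y^k$, the stopping condition~\eqref{stopping_cond_alg_basic} certifies that no sufficient decrease was found along it with stepsize $0<\alpha\le\epsilon^k$, and the mean value theorem plus $\epsilon^k\to 0$ give $\nabla f(x^*)^T(\tilde a-a)\ge 0$. The multiplier issue you worry about is resolved there by the selection rule $y^k_{j_k}\ge\tau\max_i y^k_i$, which forces $\bar y^k_{\hat\jmath}\ge\tau/|\bar{\mathcal A}|$, hence $w^*_{\hat\jmath}>0$, hence (by Proposition~\ref{prop:atoms_active_set} and stationarity of $x^*$) $\nabla f(x^*)^T(\tilde a-x^*)=0$; the reference index $\hat\jmath$ thus plays the role of the multiplier, and one concludes $\nabla f(x^*)^T(a-x^*)\le 0$, the desired contradiction. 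You name the obstacle but do not supply this mechanism, and the quantitative route you propose does not substitute for it.

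The second gap is that your plan ignores the \textit{Refine Phase}. Even if every appearance of $a$ in $\mathcal A^k$ ended with $a$ being dropped at that same iteration, the conclusion ``$a\notin\mathcal A^k$ for all $k\ge\bar k$'' would still fail if line~6 of Algorithm~\ref{alg:general} kept re-inserting $a$ into $\mathcal R^k$ infinitely often. The paper devotes the first half of its proof to excluding this: if $a\in\mathcal R^k$ for infinitely many $k$, the sufficient decrease condition $f(\bar x^k+\mu^k(a-\bar x^k))\le f(\bar x^k)-\gamma(\mu^k)^2$, the mean value theorem, $\bar x^k\to x^*$ (via Corollary~\ref{corol:lim_x}) and $\mu^k\to 0$ (via Proposition~\ref{prop:lim_hat_mu}) force $\nabla f(x^*)^T(a-x^*)\le 0$, contradicting the hypothesis on $a$. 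You need this step both to rule out re-insertion and to justify that, along the contradiction subsequence, $a$ persists in $\mathcal A^k$ only by escaping $\mathcal D^k$, i.e., only by having $\bar y^k_{\hat\imath}>0$.
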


\begin{proof}
Let $a \in \mathcal{A}$ be an atom such that
\begin{equation}\label{atom_active_set}
\nabla f(x^*)^T(a-x^*)>0.
\end{equation}
First, we want to show that
\begin{equation}\label{a_i_notin_Rk}
a \notin \mathcal R^k, \quad \forall\ \text{ sufficiently large } k.
\end{equation}
Arguing by contradiction, assume that~\eqref{a_i_notin_Rk} is not true.
Then, an infinite subset of iterations $K \subseteq \{0,1\ldots\}$ exists such that $a \in \mathcal R^k$ for all $k \in K$.
From the instructions of the algorithm, we have that
\[
f(\bar x^k + \mu^k (a - \bar x^k)) \le f(\bar x^k) - \gamma (\mu^k)^2, \quad \forall\ k \in K.
\]
By the mean value theorem, we can write
\[
f(\bar x^k + \mu^k (a - \bar x^k)) - f(\bar x^k) = \mu^k \nabla f(\bar x^k + \eta^k \mu^k (a - \bar x^k))^T (a - \bar x^k),
\]
for some $\eta^k \in (0,1)$, and then
\[
\nabla f(\bar x^k + \eta^k \mu^k (a - \bar x^k))^T (a - \bar x^k) \le -\gamma \mu^k, \quad \forall\ k \in K.
\]
From  Corollary~\ref{corol:lim_x} and the fact that $\|\bar x^k - x^*\| \le \|\bar x^k - x^{k+1}\| + \|x^{k+1}-x^*\|$, it follows that $\{\bar x^k\} \to x^*$. Taking also into account that $\eta^k \in (0,1)$ and $\{\mu^k\} \to 0$ (from Proposition~\ref{prop:lim_hat_mu}),
we have that
\[
\lim_{\substack{k \to \infty \\ k \in K}} (\bar x^k + \eta^k \mu^k (a - \bar x^k)) = x^*.
\]
Therefore, using the continuity of $\nabla f$ we obtain
\[
0 \ge \lim_{\substack{k \to \infty \\ k \in K}} \nabla f(\bar x^k + \eta^k \mu^k (a - \bar x^k))^T (a - \bar x^k) = \nabla f(x^*)^T (a - x^*),
\]
which contradicts~\eqref{atom_active_set}. Thus, \eqref{a_i_notin_Rk} holds.

Now, to prove the desired result we proceed by contradiction.
Namely, we assume that an infinite subset of iterations $K \subseteq \{0,1\ldots\}$ exists such that $a \in \mathcal A^k$ for all $k \in K$.
In view of~\eqref{a_i_notin_Rk}, an iteration $\hat k \in K$ must exist such that
\begin{equation}\label{a_i_notin_Dk}
a \in \mathcal A^k \setminus \mathcal D^k, \quad \forall\ k \ge \hat k, \, k \in K.
\end{equation}
Using the fact that $\mathcal A$ is a finite set and the feasible set of every restricted Problem~\eqref{eqprobconv} is compact,
without loss of generality we can assume that $\mathcal A^k$ is constant for all $k \in K$ and that $\{\bar y^k\}$ converges to $y^*$
(passing to a further subsequence if necessary). Namely,
\begin{subequations}\label{subseq_y^k}
\begin{align}
& \mathcal A^k = \bar {\mathcal A}, \quad \forall\ k \in K, \\
& \lim_{\substack{k \to \infty \\ k \in K}} \bar y^k = y^*.
\end{align}
\end{subequations}
Since $\mathcal A^k$ is constant for all $k \in K$, also the matrix $A^k$ and the function $\varphi^k$ are the same for all $k \in K$,
and let us denote them by $\bar A$ and $\bar \varphi$, respectively.
From the previous relations, and taking into account Proposition~\ref{prop:lim_hat_mu}, we also have
\[
x^* =
\lim_{\substack{k \to \infty \\ k \in K}} x^k =
\lim_{\substack{k \to \infty \\ k \in K}} \bar x^k =
\lim_{\substack{k \to \infty \\ k \in K}} \bar A \bar y^k = \bar A y^*.
\]
Moreover, let us denote by $\hat \imath$ the column index of the matrix $\bar A$ the corresponds to the atom $a$, that is, $\bar Ae_{\hat \imath} = a$.

From~\eqref{a_i_notin_Dk} and~\eqref{dropcond}, necessarily $\bar y^k_{\hat \imath} > 0$ for all $k \ge \hat k$, $k \in K$.
Since the set of directions used in \DFSIMPL\ is finite, for all $k \in K$ we can assume that
the directions used in the last iteration of \DFSIMPL\ are the same, having the form
$\pm(e_h-e_{\hat \jmath})$, $h = 1,\ldots,|\bar{\mathcal A}|$, $h \ne \hat \jmath$, for some $\hat \jmath \in \{1,\ldots,|\bar{\mathcal A}|\}$, with
$\bar y^k_{\hat \jmath} > 0$ for all $k \in K$.
In particular, recalling the rule for computing the search directions in \texttt{DF-SIMPLEX} and that the stopping condition~\eqref{stopping_cond_alg_basic} requires that no progress is made along any direction, we have that
\begin{equation}\label{bar_y_k_j}
\bar y^k_{\hat \jmath} \ge \tau/|\bar{\mathcal A}|, \quad \forall\ k \in K.
\end{equation}
Moreover, $e_{\hat \jmath}-e_{\hat \imath}$ is a feasible direction at $\bar y^k$ for all $k \ge \hat k$, since $\bar y^k_{\hat \imath} > 0$.
So, using again the fact that the stopping condition~\eqref{stopping_cond_alg_basic} requires that
no progress is made along any direction,
from the instructions of \DFSIMPL\ we have that
\[
\bar \varphi(\bar y^k + \alpha (e_{\hat \jmath}-e_{\hat \imath})) > \bar \varphi(\bar y^k) - \gamma \alpha^2, \quad k \ge \hat k, \, k \in K,
\]
with $0 < \alpha \le \epsilon^k$.
By the mean value theorem, we can write
\[
\bar \varphi(\bar y^k + \alpha (e_{\hat \jmath}-e_{\hat \imath})) - \bar \varphi(\bar y^k) =
\alpha \nabla \bar \varphi(\bar y^k + \eta^k \alpha (e_{\hat \jmath}-e_{\hat \imath}))^T (e_{\hat \jmath}-e_{\hat \imath}),
\]
for some  $\eta^k \in (0,1)$. Then
\[
\nabla \bar \varphi(\bar y^k + \eta^k \alpha (e_{\hat \jmath}-e_{\hat \imath}))^T (e_{\hat \jmath}-e_{\hat \imath}) \ge -\gamma \alpha, \quad k \ge \hat k, \, k \in K.
\]
Since $\eta^k \in (0,1)$, $\alpha \le \epsilon^k$ and $\{\epsilon^k\} \to 0$,
we have that
\[
\lim_{\substack{k \to \infty \\ k \in K}} (\bar y^k + \eta^k \alpha (e_{\hat \jmath}-e_{\hat \imath})) = y^*.
\]
Therefore, from the continuity of $\nabla \bar \varphi$ and using again the fact that $\{\epsilon^k\} \to 0$, we obtain that
\[
0 \le \nabla \bar \varphi(y^*)^T (e_{\hat \jmath}-e_{\hat \imath}) = [\bar A^T \nabla f(\bar A y^*)]^T (e_{\hat \jmath}-e_{\hat \imath})
  = \nabla f(x^*)^T (\bar Ae_{\hat \jmath}-\bar Ae_{\hat \imath}).
\]
Let us denote by $\tilde a$ the atom that corresponds to the $\hat \jmath$-th column of $\bar A$, that is,
$\bar Ae_{\hat \jmath} = \tilde a$ (also recall that $\bar Ae_{\hat \imath} = a$).
Then
\begin{equation}\label{ineq_proof_active_set}
0 \le \nabla f(x^*)^T (\tilde a-a) = \nabla f(x^*)^T (x^*-a) + \nabla f(x^*)^T (\tilde a-x^*)
\end{equation}
Now, consider the vector $w^* \in \Delta_{m-1}$, obtained from $y^*$ by adding the zero components corresponding to the atoms in
$\mathcal A \setminus \bar {\mathcal A}$, so that $A w^* = \bar A y^* = x^*$.
We can assume, without loss of generality, that $\tilde a$ is also the ${\hat \jmath}$-th column of the full  matrix $A$. Using~\eqref{bar_y_k_j}, we can hence write
$
w^*_{\hat \jmath} > 0.
$
So, from Proposition~\ref{prop:atoms_active_set}
and stationarity of $x^*$, we have that $\nabla f(x^*)^T (\tilde a-x^*) = 0$.
Using this equality in~\eqref{ineq_proof_active_set}, we get $\nabla f(x^*)^T (a-x^*) \le 0$,
thus contradicting~\eqref{atom_active_set}.
\end{proof}

\subsection{Enhancing the \textit{Drop Phase} by gradient estimates}\label{subsec:drop_phase}
Removing from $A^k$ all the atoms with zero weight  might be a too ``aggressive'' strategy (i.e., some of the atoms removed at the first iterations might be useful in the subsequent iterations). Then, we can define a more sophisticated rule to build $\mathcal D^k$
by using approximations of $\nabla\varphi^k(\bar y^k)$.
In particular, at every iteration $k$ we can set
\begin{equation}\label{dropcond2}
\mathcal D^k = \{a\in \mathcal{A}^k \text{ such that } a=A^k e_h,\ \bar y^k_h=0 \text{ and } (g^k)^T(e_h - \bar y^k)\ge 0\},
\end{equation}
where the vector $g^k$ is an approximation of $\nabla \varphi^k(\bar y^k)$ satisfying
\begin{equation}\label{errgcond}
\norm{\nabla \varphi^k(\bar y^k) - g^k} \le r^k,
\end{equation}
with $\{r^k\}$ being a sequence of positive scalars converging to zero
(we will discuss later how to compute $g^k$ efficiently such that~\eqref{errgcond} holds).

The rationale behind this choice lies in the fact that
\begin{equation*}
\nabla \varphi^k(\bar y^k)^T(e_h - \bar y^k)  = [(A^k)^T \nabla f(\bar x^k)]^T (e_h - \bar y^k) %= \nabla f(\bar x^k)^T (A^k e_h - A^k \bar y^k)
= \nabla f(\bar x^k)^T (a - \bar x^k),
\end{equation*}
and then a good approximation of $\nabla \varphi^k(\bar y^k)$ can help us to predict, in a neighborhood of $x^*$, the atoms $a\in \mathcal{A}$ such that
$\nabla f(x^*)^T (a - x^*) > 0$. We now show that this choice of $\mathcal D^k$ ensures the same theoretical properties seen above for~\eqref{dropcond}.

\begin{theorem}\label{th:active_set2}
Let $\{x^k\}$ be a sequence of points produced by Algorithm~\ref{alg:general}, where ${\mathcal D}^k$ is computed as in~\eqref{dropcond2}.
Assume that $\displaystyle{\lim_{k \to \infty} x^k \to x^*}$. Then, an iteration $\bar k$ exists such that, for all $k \ge \bar k$,
\[
\nabla f(x^*)^T (a - x^*) > 0, \ a \in \mathcal{A} \; \Rightarrow \; a \notin \mathcal A^k.
\]
\end{theorem}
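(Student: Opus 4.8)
The plan is to mirror the proof of Theorem~\ref{th:active_set}, exploiting the fact that the only difference between the two statements lies in the Drop rule (\eqref{dropcond2} in place of \eqref{dropcond}); the Optimize and Refine phases, as well as the standing hypotheses, are identical. In particular, the first half of that earlier proof — establishing that $a \notin \mathcal{R}^k$ for all sufficiently large $k$ whenever $\nabla f(x^*)^T(a-x^*)>0$ — carries over verbatim, since it rests solely on the sufficient-decrease test of the Refine phase (line~6 of Algorithm~\ref{alg:general}) together with $\{\mu^k\}\to 0$ (Proposition~\ref{prop:lim_hat_mu}) and $\{\bar x^k\}\to x^*$ (a consequence of Corollary~\ref{corol:lim_x} and the assumption $x^k\to x^*$). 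I would restate that conclusion and then concentrate the new work on the Drop phase.

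Next I would argue by contradiction, assuming an infinite index set $K$ with $a\in \mathcal A^k$ for all $k\in K$. Because $a$ is never re-added through $\mathcal R^k$, this forces $a\in \mathcal A^k\setminus \mathcal D^k$ for all large $k\in K$. As in the earlier proof, I pass to a subsequence along which $\mathcal A^k=\bar{\mathcal A}$ is constant and $\bar y^k\to y^*$, and I let $\hat\imath$ be the column of $\bar A$ with $\bar A e_{\hat\imath}=a$. The crucial new observation is that, under the refined rule~\eqref{dropcond2}, the atom $a$ can survive the Drop phase in exactly one of two ways: either its weight is strictly positive, $\bar y^k_{\hat\imath}>0$, or its weight vanishes, $\bar y^k_{\hat\imath}=0$, yet the estimated directional term fails the drop test, i.e.\ $(g^k)^T(e_{\hat\imath}-\bar y^k)<0$. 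After a further subsequence extraction I may assume one of these alternatives holds throughout $K$, and I treat the two cases separately.

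In the positive-weight case $\bar y^k_{\hat\imath}>0$, I would reuse the argument of Theorem~\ref{th:active_set} essentially unchanged: fixing the \DFSIMPL\ directions along the subsequence, the stopping condition~\eqref{stopping_cond_alg_basic} (no progress along any feasible direction) combined with the mean value theorem and $\{\epsilon^k\}\to 0$ yields $\nabla f(x^*)^T(\tilde a-a)\ge 0$ for the atom $\tilde a$ in the pivot column $\hat\jmath$; since the pivot selection gives $\bar y^k_{\hat\jmath}\ge \tau/|\bar{\mathcal A}|$ and hence $w^*_{\hat\jmath}>0$, Proposition~\ref{prop:atoms_active_set} together with stationarity of $x^*$ forces $\nabla f(x^*)^T(\tilde a-x^*)=0$, whence $\nabla f(x^*)^T(a-x^*)\le 0$, contradicting~\eqref{atom_active_set}.

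The genuinely new case — and the only place where the approximation~\eqref{errgcond} is used — is the zero-weight alternative, where I exploit $(g^k)^T(e_{\hat\imath}-\bar y^k)<0$. Splitting $\nabla \varphi^k(\bar y^k)^T(e_{\hat\imath}-\bar y^k)=(g^k)^T(e_{\hat\imath}-\bar y^k)+[\nabla \varphi^k(\bar y^k)-g^k]^T(e_{\hat\imath}-\bar y^k)$ and bounding the error term by $r^k\norm{e_{\hat\imath}-\bar y^k}\le \sqrt2\,r^k$ (both vectors lie in the simplex, so their distance is at most $\sqrt2$), I obtain $\nabla \varphi^k(\bar y^k)^T(e_{\hat\imath}-\bar y^k)<\sqrt2\,r^k$. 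Using $\nabla \varphi^k(\bar y^k)=(\bar A)^T\nabla f(\bar x^k)$ this reads $\nabla f(\bar x^k)^T(a-\bar x^k)<\sqrt2\,r^k$, and passing to the limit with $\{\bar x^k\}\to x^*$, $\{r^k\}\to 0$, and continuity of $\nabla f$ gives $\nabla f(x^*)^T(a-x^*)\le 0$, again contradicting~\eqref{atom_active_set}. The main point to recognize here — and the subtlety that makes the argument short — is that this case needs none of the \DFSIMPL\ stationarity machinery: by construction the drop test is a surrogate, with vanishing error $r^k$, for the sign of $\nabla f(\bar x^k)^T(a-\bar x^k)$, so $r^k\to 0$ alone closes the argument.
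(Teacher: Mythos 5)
Your proposal is correct and follows essentially the same route as the paper: the same reduction to the two survival alternatives under rule~\eqref{dropcond2}, the same reuse of the Theorem~\ref{th:active_set} machinery to rule out the positive-weight case, and the same use of the error bound~\eqref{errgcond} with $r^k \to 0$ to turn $(g^k)^T(e_{\hat\imath}-\bar y^k)<0$ into $\nabla f(x^*)^T(a-x^*)\le 0$ in the zero-weight case. The only cosmetic difference is that you bound $\nabla\bar\varphi(\bar y^k)^T(e_{\hat\imath}-\bar y^k)$ from above while the paper bounds $(g^k)^T(e_{\hat\imath}-\bar y^k)$ from below before passing to the limit; the two are equivalent.
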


\begin{proof}
The first part of the proof is identical to the one given for Theorem~\ref{th:active_set}. Namely, we assume that $a\in \mathcal{A}$ is an atom such that~\eqref{atom_active_set} holds and we obtain~\eqref{a_i_notin_Rk}.
To prove the desired result, we then proceed by contradiction, assuming that an infinite subset of iterations $K \subseteq \{0,1\ldots\}$ exists such that $a \in \mathcal A^k$ for all $k \in K$.
In view of~\eqref{a_i_notin_Rk}, an iteration $\hat k \in K$ must exist such that~\eqref{a_i_notin_Dk} holds.
Now, assuming without loss of generality that $\{y^k\}$ satisfies~\eqref{subseq_y^k}, and using the same definitions of subsequences, matrices and indices given in the proof of Theorem~\ref{th:active_set},
from~\eqref{dropcond2} we have that two possible cases
(that will be shown to lead to a contradiction)
can occur for $k \ge \hat k$, $k \in K$: either
\begin{enumerate*}[label=(\roman*)]
\item $\bar y^k_{\hat \imath} > 0$, or
\item $\bar y^k_{\hat \imath} = 0$ and $(g^k)^T(e_{\hat \imath}-\bar y^k)<0$.
\end{enumerate*}
Since, by the same arguments used in the proof of Theorem~\ref{th:active_set}, the first case cannot occur infinite times,
necessarily $\bar y^k_{\hat \imath} = 0$ and $(g^k)^T(e_{\hat \imath}-\bar y^k)<0$ for all sufficiently large $k \in K$.
Taking into account~\eqref{errgcond}, for all $k \in K$ we can write
\begin{equation*}
\begin{split}
 |\nabla \bar \varphi(\bar y^k)^T(e_{\hat \imath}-\bar y^k)-(g^k)^T(e_{\hat \imath}-\bar y^k)| & =
 |(\nabla \bar \varphi(\bar y^k)-g^k)^T(e_{\hat \imath}-\bar y^k)| \\
 & \le \|\nabla \bar \varphi(\bar y^k)-g^k\|\|e_{\hat \imath}-\bar y^k\| \le \sqrt 2 r^k.
\end{split}
\end{equation*}
Therefore, for all $k \in K$ we have that
\begin{equation*}
\begin{split}
(g^k)^T(e_{\hat \imath}-\bar y^k) & \ge \sqrt 2 r^k + \nabla \bar \varphi(\bar y^k)^T(e_{\hat \imath}-\bar y^k) =
\sqrt 2 r^k + [\bar A^T \nabla f(\bar A \bar y^k)]^T (e_{\hat \imath}-\bar y^k) \\
& = \sqrt 2 r^k + \nabla f(\bar x^k) (\bar A e_{\hat \imath} - \bar A \bar y^k) = \sqrt 2 r^k + \nabla f(\bar x^k) (a - \bar x^k).
\end{split}
\end{equation*}
From the continuity of $\nabla f$ and the fact that $\{r^k\} \to 0$, taking the limits we obtain
\[
\liminf_{\substack{k \to \infty \\ k \in K}} (g^k)^T(e_{\hat \imath}-\bar y^k) \ge \nabla f(x^*)^T (a - \bar x^*) > 0,
\]
leading to a contradiction with the fact that $(g^k)^T(e_{\hat \imath}-\bar y^k)<0$ for all $k \in K$.
\end{proof}

%{\color{red}
Now, we describe how to compute $g^k$ in such a way that condition~\eqref{errgcond} is satisfied.
Since point $\bar y^k$ is obtained in the \textit{Optimize Phase} by running \DFSIMPL\ with a tolerance $\epsilon^k$,
we can simply use the sample points produced in the last iteration of \DFSIMPL\ plus one additional sample point  not belonging to $\Delta_{|\mathcal A^k|-1}$,
that is $\bar y^k - \epsilon_k \frac{\sqrt 2}{|\mathcal A^k|} e$,
to perform a simplex gradient computation in $\R^{|\mathcal A^k|}$ (see, e.g.,~\cite{kelley:1999} for definition of simplex gradient).
The last sample point is needed to have a poised sample set.
More in detail, let $\bar y^k,s^k_1,\ldots,s^k_r$ be all the available sample points, with $r \ge |\mathcal A^k|$, and let us denote $Y^k = \{\bar y^k,s^k_1,\ldots,s^k_r\}$.
Moreover, let
\[
S^k = \begin{bmatrix} s^k_1 - \bar y^k & \ldots & s^k_r - \bar y^k \end{bmatrix}, \quad \text \quad b^k = \begin{bmatrix}\varphi^k(s^k_1)-\varphi^k(\bar y^k) & \ldots & \varphi^k(s^k_r)-\varphi^k(\bar y^k) \end{bmatrix}^T.
\]
We compute $g^k$ as the least-squares solution of $(S^k)^Tg = b^k$.
Under the assumption that $\nabla f$ is Lipschitz continuous with constant $L$,
if the sample set $Y^k$ is poised (i.e., if the columns of $(S^k)^T$ are linearly independent) from Theorem~3.1 in~\cite{custodio:2007} it follows that
$
\norm{\nabla \varphi^k(\bar y^k) - g^k} \le \Bigl( |\mathcal A^k|^{1/2} \frac L2 \bigl\|(\Sigma^k)^{-1}\bigr\| \Bigr) \nu^k,
$
where $\nu^k$ is the radius of the smallest ball centered at $\bar y^k$ enclosing the points $s^k_1,\ldots,s^k_r$, and $\Sigma^k$ is obtained from the reduced singular value decomposition of $S^T/\nu^k$, that is, $S^T/\nu^k = U^k \Sigma^k (V^k)^T$, for proper matrices $U^k$ and $V^k$.

In our case, $\nu^k = \sqrt{2}\epsilon^k$ for all sufficiently large $k$
(it follows from the stopping condition used in \DFSIMPL\ combined with the fact that $\{\epsilon^k\} \to 0$ and the fact that all the directions have norm equal to $\sqrt{2}$). Clearly, $\nu^k \to 0$ as $\epsilon^k \to 0$.
Moreover, it is easy to see that $Y^k$ is poised (it follows from the fact that
\DFSIMPL\ uses directions of the form $\pm (e_i - e_{j_k})$ and we also considered an additional sample point along the direction $-e$).
Using the notion of $\Lambda$-poisedness as given in~\cite{conn:2008a,conn:2008b}, it is also easy to see that $\bigl\|(\Sigma^k)^{-1}\bigr\|$ is upper bounded by a constant $\Lambda$
for all sufficiently large iterations.\footnote{We can identify $\tilde Y^k \subseteq Y^k$, with $|\tilde Y^k| = |\mathcal A^k|$, such that $\tilde Y^k$ is $\tilde \Lambda$-poised in the ball centered at $\bar y^k$ with radius $\nu^k$, and this implies that $Y^k$ is $\Lambda$-poised in the same ball with $\Lambda = |\mathcal A^k|^{1/2}\tilde \Lambda$ (see~\cite{conn:2009}, pag. 63), which, in turn, implies that $Y^k$ is poised
and, from Theorem 2.9 in~\cite{conn:2008b}, that $\bigl\|(\Sigma^k)^{-1}\bigr\| \le |\mathcal A^k|^{1/2} \tilde \Lambda \le m \tilde \Lambda$.}

\section{Numerical experiments}
In this section, we analyze in depth the practical performances of the \ORD\ algorithm.  We carried out all our tests in MATLAB R2020b on an Intel(R) Core(TM) i7-9700 with $16$ GB RAM memory, and used data and performance profiles~\cite{more2009benchmarking} when comparing the method with other algorithms.
Specifically, let $S$ be a set of algorithms and $P$ a set of problems. For each $s\in S$ and $p \in P$, let $t_{p,s}$ be the number of function evaluations required by algorithm $s$ on problem $p$ to satisfy the condition
\begin{equation}\label{eq:stop}
f(x_k) \leq f_L + \tau(f(x_0) - f_L)
\end{equation}
where $0< \tau < 1$ and $f_L$ is the best objective function value achieved by any solver on problem $p$. Then, data and performance profiles of solver $s$ are respectively defined as follows:
 \begin{eqnarray*}
 d_s(\kappa) & = & \frac{1}{|P|}\left|\left\{p\in P: t_{p,s}\leq\kappa(n_p+1)\right\}\right|,\\
 \rho_s(\iota) & = & \frac{1}{|P|}\left|\left\{p\in P: \frac{t_{p,s}}{\min\{t_{p,s'}:s'\in S\}}\leq\iota\right\}\right|,
\end{eqnarray*}
where $n_p$ is the dimension of problem $p$.

\subsection{Preliminary results}
We first chose the following $25$  objective functions from the literature
(see, e.g.,~\cite{andrei2008unconstrained,gould2015cutest}):
Arwhead,
Cosine,
Cube,
Diagonal 8,
Extended Beale,
Extended Cliff,
Extended Denschnb,
Extended Denschnf,
Extended Freudenstein \& Roth,
Extended Hiebert,
Extended Himmelblau,
Extended Maratos,
Extended Penalty,
Extended PSC1,
Extended Rosenbrock,
Extended Trigonometric,
Extended White \& Holst,
Fletchcr,
Genhumps,
Mccormk,
Power,
Quartc,
Sine,
Staircase 1,
Staircase 2.
Then, we built the test problems by randomly generating the atoms with a uniform distribution in~$[0,10]^n$. We would like to highlight that there was no relevant redundancy in the generated atoms. In cases where the atoms in $\cal A$ are highly redundant, it is possible to remove useless atoms by solving a sequence of linear programs. This redundancy test might anyway have a significant computational cost (especially when both the dimension of  the problem and the number of the atoms are large).

In the first experiment, we compared \ORD\ with the following algorithms:
\begin{itemize}
\item \DFSIMPL,  the solver proposed in Section~\ref{sec:df_simplex} for minimization over the unit simplex;
\item \LINCOA~\cite{lincoa}, a trust-region based solver for linearly constrained problems\footnote{We would like to thank Tom M. Ragonneau and Zaikun Zhang for  kindly sharing their MATLAB interface for the \LINCOA\ software.};
\item \NOMAD\ (v3.9.1)~\cite{Nomad,AuLeTr09a}, a solver for non-linearly constrained problems implementing the Mesh Adaptive Direct Search algorithm (MADS);
\item \PSWARM~\cite{vaz2007particle}, a global optimization solver for linearly constrained problems combining pattern search and particle swarm;
\item \SDPEN~\cite{liuzzi2010sequential}, a solver for non-linearly constrained problems based on a sequential penalty approach.
\end{itemize}
When running our tests on \DFSIMPL\ and \LINCOA, we used formulation~\eqref{prob_sim0} to represent the problems
(note that $\bar m = m$ for \DFSIMPL\ in this case).
Since \PSWARM\ and \SDPEN\ only handle inequality constraints, they were run by suitably rewriting~\eqref{prob_sim0} as an inequality constrained problem.
Namely, we used the substitution $y_1 = 1 - \sum_{i=2}^n y_i$ to eliminate the variable $y_1$,
so that the new problem only has the constraints $\sum_{i=2}^n y_i \le 1$ and $y_i \ge 0$, $i=2,\ldots,n$.
%With this reformulation we obtained better results than what we got by replacing the equality constraint by two inequality constraints.

We considered two different versions for \NOMAD. The first one, referred to as \mbox{\NOMAD\ \texttt1}, uses the same formulation as the one used for \PSWARM\ and \SDPEN.
The second one, referred to as \mbox{\NOMAD\ \texttt2}, considers the formulation \eqref{prob} and works in the original space $\R^n$ using a non-quantifiable black box constraint that only indicates if $x$ belongs to $\cal M$ or not (this is carried out by solving a linear program).

We are interested in analyzing the performances of the algorithms for different ratios $m/n$, with  $m$ the number of atoms and $n$ the number of variables. Notice that this might affect the sparsity of the final solution (i.e., the number of atoms needed to assemble $x^*$).
In particular, from Carath\'eodory's theorem~\cite{caratheodory1907variabilitatsbereich} we expect that the larger the ratio $m/n$, the sparser the solution. \ORD\ should hence be more efficient than the competitors for larger values of $m/n$.

So, we fix $n=10$ and set $m \in \{n,5n,10n,20n\}$.
In \ORD\ we stopped the algorithm at the first iteration $k$
that fails the test at line 6 of Algorithm~\ref{alg:general} and such that
\[
\hat \mu^k \le \frac{10^{-4}}{\max_{a_i \in \mathcal A \setminus {\mathcal A}^k}\norm{a_i-\bar x^k}}.
\]
In \DFSIMPL\ we used the stopping condition described in Subsection~\ref{subsec:stopping_cond_alg_basic}, with $\epsilon = 10^{-4}$.
In all the other algorithms, the parameters were set to their default values.
Moreover, we used a budget of $100(n+1)$ function evaluations for every algorithm
and we set the starting point as a randomly chosen vertex of $\Delta_{n-1}$.

\begin{figure}[]
\centering
\subfloat[$n = 10$, $m=10$]
{\includegraphics[scale=0.415, trim = 4.0cm 0cm 5.82cm 0.0cm, clip]{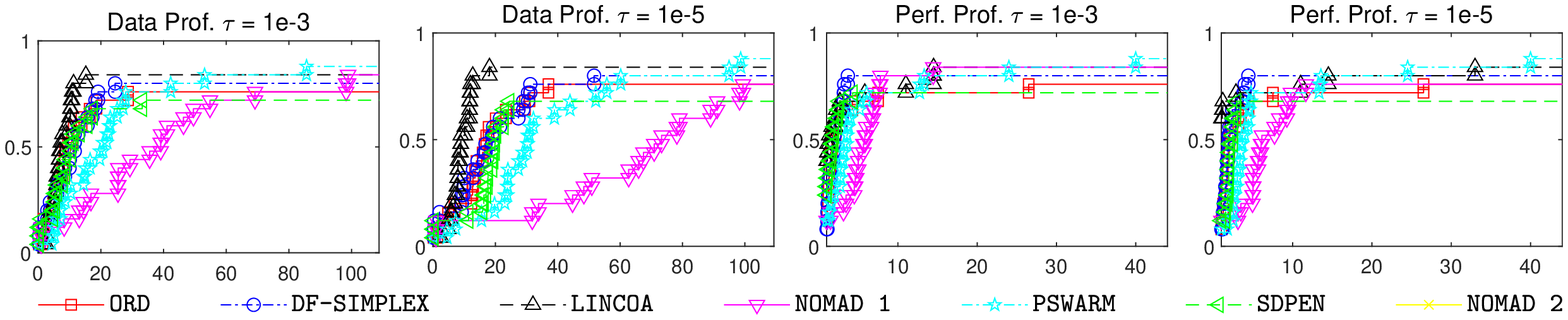}} \\
\subfloat[$n = 10$, $m=50$]
{\includegraphics[scale=0.415, trim = 4.0cm 0cm 5.82cm 0.0cm, clip]{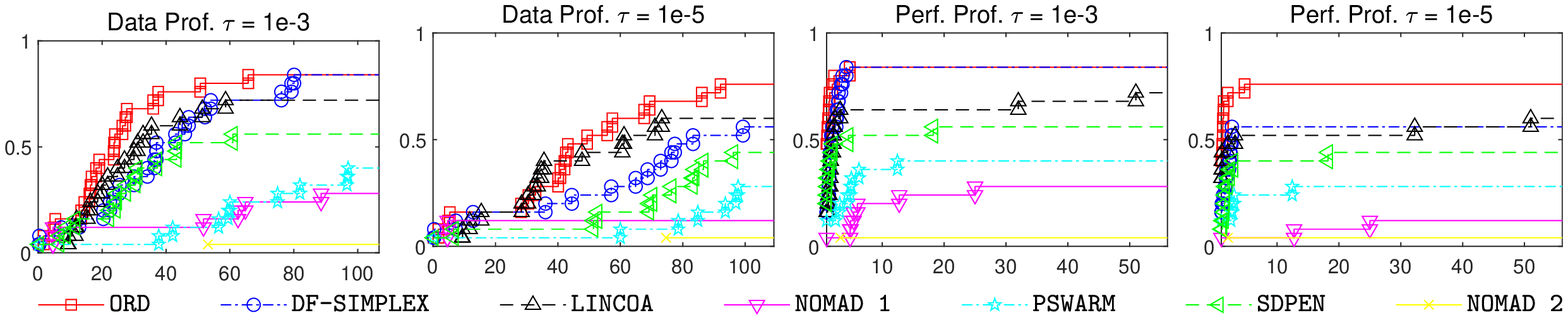}} \\
\subfloat[$n = 10$, $m=100$]
{\includegraphics[scale=0.415, trim = 4.0cm 0cm 5.82cm 0.0cm, clip]{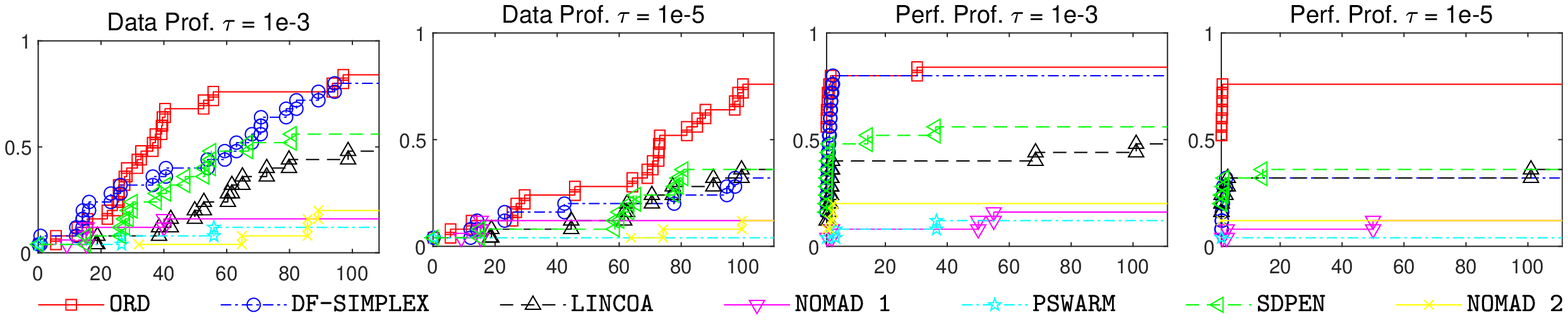}} \\
\subfloat[$n = 10$, $m=200$]
{\includegraphics[scale=0.415, trim = 4.0cm 0cm 5.82cm 0.0cm, clip]{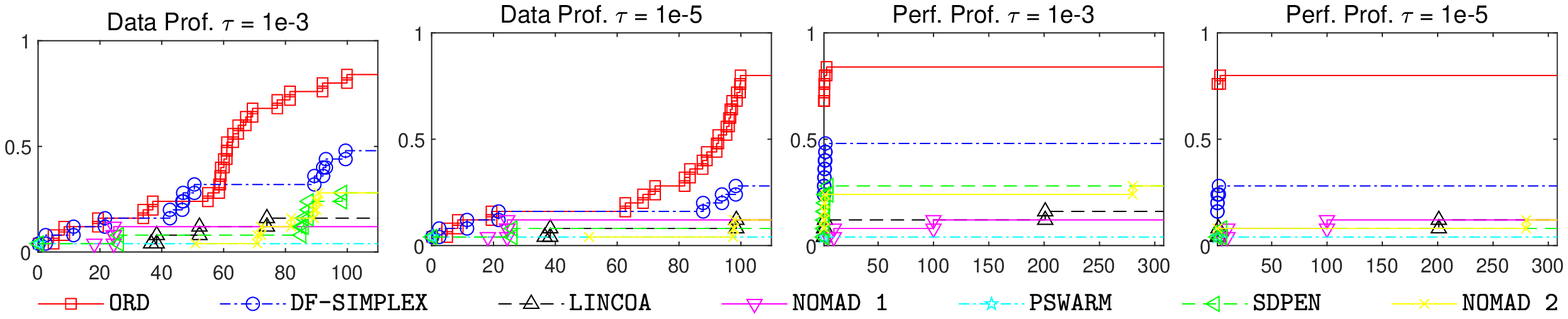}}
\caption{Comparisons among \ORD, \DFSIMPL, \LINCOA, \NOMAD\ \texttt1, \PSWARM, \SDPEN\ and \NOMAD\ \texttt2 for different ratios $m/n$.}
\label{fig:plot_n_10}
\end{figure}

We report, in Figure~\ref{fig:plot_n_10}, the data and performance profiles related to the experiment.
Taking a look at the plots, we see that
\ORD\ clearly outperforms the competitors as the ratio $m/n$ increases (and we get a sparser solution). More specifically, the average sparsity levels
(i.e., the average percentage of atoms with zero weight) of the solutions found by \ORD\
are $62.00\%$ for $m=n$, $87.92\%$ for $m=5n$, $92.68\%$ for $m = 10n$ and $96.08\%$ for $m = 20n$.

In the second experiment, we considered the largest ratio $m/n$, obtained with $m = 20n$, and set the value of $n$ to $20$ and $50$.
For these new experiments, we decided to only run \NOMAD\ \texttt2. There are two main reasons why we did that. First, \NOMAD\ \texttt2
works in the original $n$-dimensional space, while \NOMAD\ \texttt1 works in an $m$-dimensional space (20 times larger than $n$ in these experiments). Second, the maximum number of variables that \NOMAD\ can handle is  $1000$, hence there is no way to run \NOMAD\ \texttt1 on the largest problems anyway.
%
% For these new experiments, we considered only \ORD, \DFSIMPL\ and \LINCOA, the three solvers that got better performances in the previous experiments with $m = 20n$.
%

In Figure~\ref{fig:plot_n_20_50} we report the data and performance profiles related to the new experiment, only including the four solvers that got the best performances, that is, \ORD, \DFSIMPL, \LINCOA\ and \SDPEN.
We see that \ORD\ clearly outperforms the other solvers.
%
%The data and performance profiles related to the new experiment, reported in %Figure~\ref{fig:plot_n_20_50}, show that \ORD\ clearly outperforms the other %solvers.
%
We would also like to notice that the average running time for \ORD\ and \DFSIMPL, both written in MATLAB, is smaller than
$0.1$ seconds for $n=20$ and smaller than $1$ second for $n=50$.
It is the same order of magnitude as \SDPEN, but is much smaller
than \LINCOA, which on average took about $50$ seconds for $n=20$ and about $650$ seconds for $n=50$.
%$1$ second,
%while \LINCOA\ on average took about $50$ seconds for $n=20$ and about $650$ %seconds for $n=50$.

\begin{figure}[]
\centering
\subfloat[$n = 20$, $m=400$]
{\includegraphics[scale=0.415, trim = 4.0cm 0cm 5.9cm 0.0cm, clip]{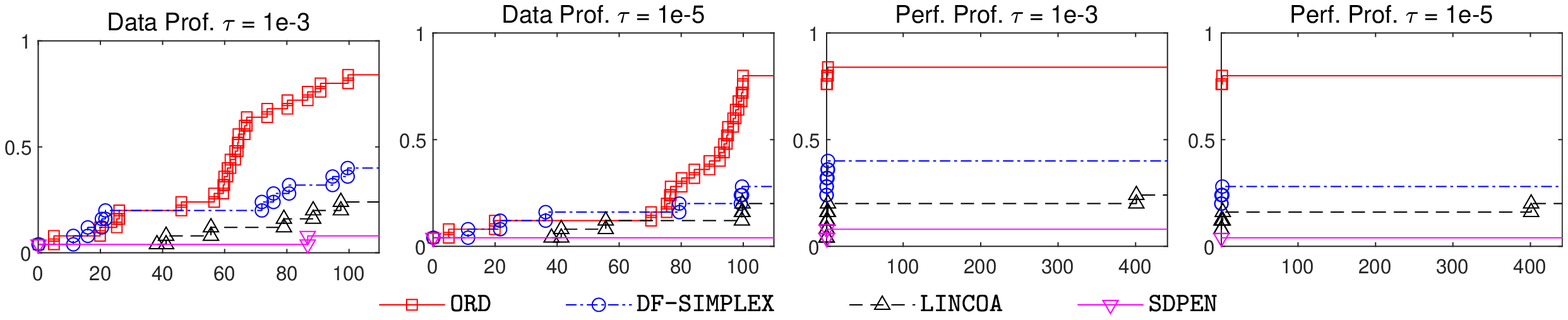}} \\
\subfloat[$n = 50$, $m=1,000$]
{\includegraphics[scale=0.415, trim = 4.0cm 0cm 5.9cm 0.0cm, clip]{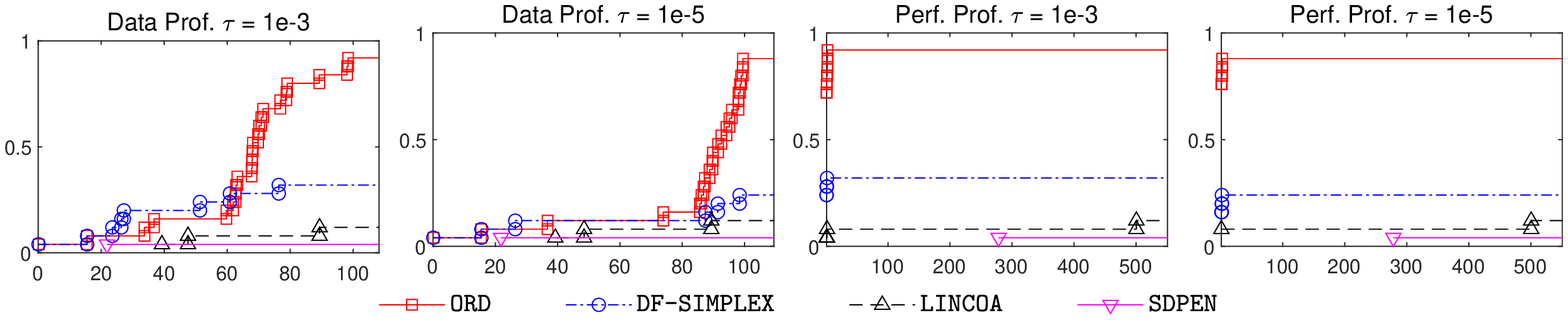}}
\caption{Comparisons among \ORD, \DFSIMPL, \LINCOA\ and \SDPEN\ for different values of $n$ and $m=20n$.}
\label{fig:plot_n_20_50}
\end{figure}

%\subsection{Large-scale instances}
In the final experiment, the aim was to analyze the behavior of the \ORD\ algorithm on relatively large-scale instances. We thus considered once again the largest ratio $m/n = 20$ and set the value of $n$ to $100$, $200$ and $500$. Taking into account the previous results, we  only compared \ORD\ with \DFSIMPL\ in this case.

The data and  performance profiles related to the comparisons, reported in Figure~\ref{fig:plot_n_100_200_500},
confirm once again the effectiveness of \ORD.
In this case, we observed an increased difference between the two considered algorithms in the CPU time required to solve the problem:
\ORD\ on average took about $3$ seconds for $n=100$, about $30$ seconds for $n=200$ and less than $490$ seconds for $n=500$,
while \DFSIMPL\ took less than $1$ second for each problem with $n \in \{100,200\}$ and on average less than $3$ seconds for the problems with $n=500$.
This difference is mainly due to the computation of the simplex gradient that \ORD\ performs in the \textit{Drop Phase}.
Anyway, \ORD\ never exceeded $490$ seconds for solving a problem.

The numerical experiments demonstrate that the methods exploiting the structure of the feasible region (i.e., \ORD, \DFSIMPL\ and \LINCOA) outperform the others. This is not surprising as the latter methods are designed to tackle more general optimization problems.

\begin{figure}[]
\centering
\subfloat[$n = 100$, $m=2,000$]
{\includegraphics[scale=0.415, trim = 4.0cm 0cm 5.9cm 0.0cm, clip]{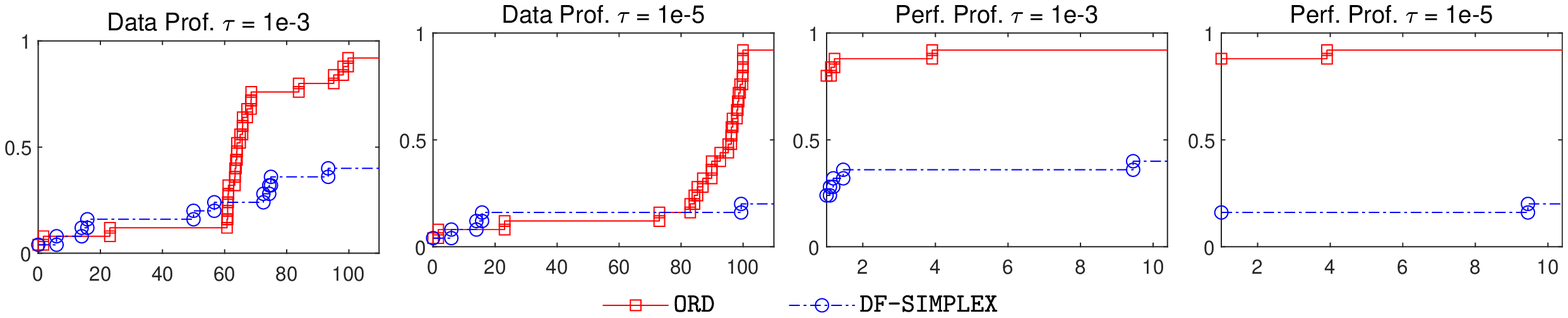}} \\
\subfloat[$n = 200$, $m=4,000$]
{\includegraphics[scale=0.415, trim = 4.0cm 0cm 5.9cm 0.0cm, clip]{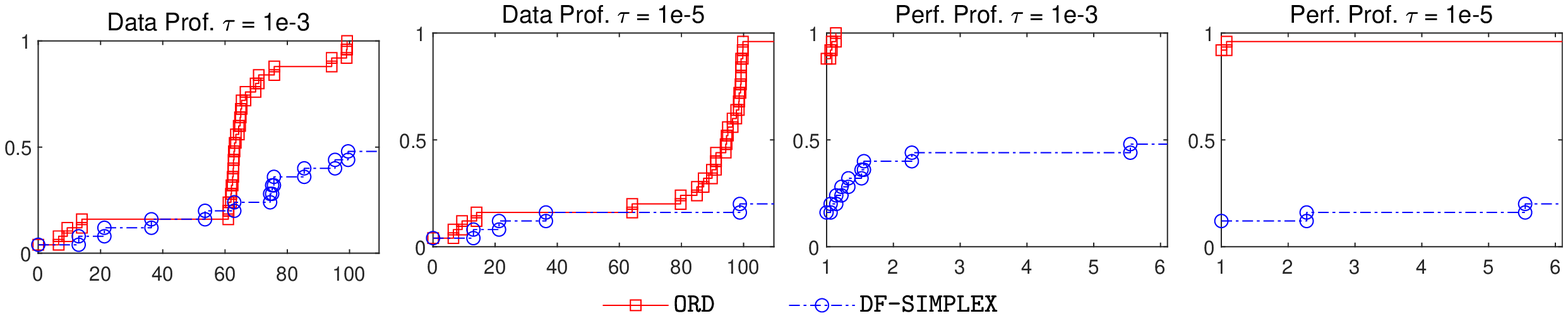}} \\
\subfloat[$n = 500$, $m=10,000$]
{\includegraphics[scale=0.415, trim = 4.0cm 0cm 5.9cm 0.0cm, clip]{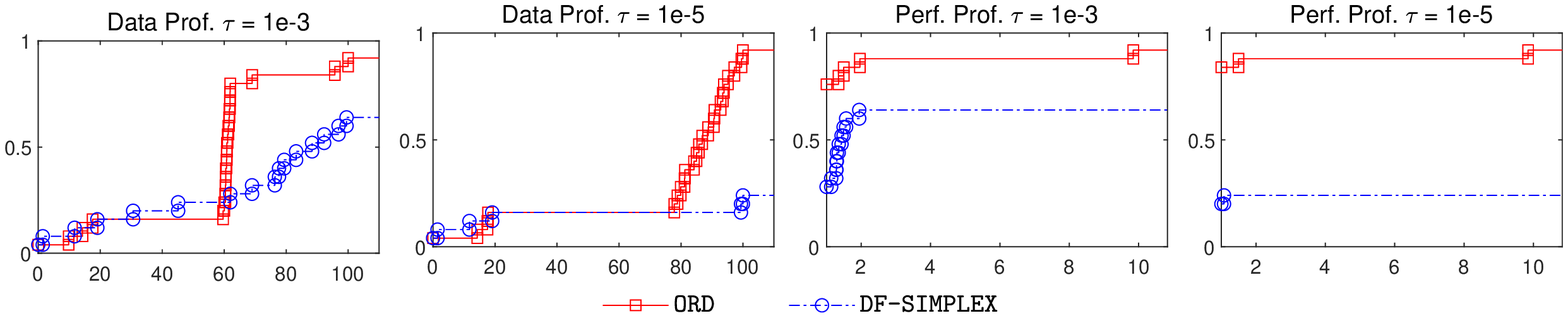}}
\caption{Comparisons between \ORD\ and \DFSIMPL\ on large-scale instances.}
\label{fig:plot_n_100_200_500}
\end{figure}

\subsection{Black-box adversarial machine learning} Adversarial examples are  maliciously perturbed inputs designed to mislead a machine learning model at test time. In  many  fields,  such  as  sign  identification  for  autonomous  driving,  the vulnerability of a model to such examples might have relevant security implications.
An \textit{adversarial attack} hence
consists in taking a correctly classified data point $x_0$ and slightly modifying it to create a new data point that leads a given model to misclassification (see, e.g., \cite{carlini2017towards, chen2017zoo,goodfellow2014generative} for further details).

We now consider a classifier that takes a vector $x \in \R^n$ as an input and outputs $F(x) \in \R^p$, where
$[F(x)]_i \in [0,1]$ represents the \emph{confidence score} for class $i=1,\ldots,p$, i.e., the predicted probability that $x$ belongs to that class, and $\sum_{i=1}^p [F(x)]_i = 1$.

In many real-world applications, the internal configuration of such a classifier is unknown, and one can only access its input and output, i.e., one can only compute $F(x)$.
In this case, we can perform a so-called \emph{black-box adversarial attack} on the model \cite{chen2018frank,chen2017zoo}.

We formulate our problem as a \textit{maximum allowable attack} \cite{chen2018frank,goodfellow2014generative}, namely,
\begin{equation}\label{prob_bb_attack}
\begin{split}
& \min \, f(x_0+x) \\
& s.t. \quad \norm{x}_p \le \varepsilon,
\end{split}
\end{equation}
where $f$ is a suitably chosen attack loss function, $x_0$ is a correctly classified data point, $x$ is the additive noise/perturbation, $\varepsilon > 0$ denotes the magnitude of the attack, and $p\geq 1$.
We set $p=1$ in the formulation \eqref{prob_bb_attack}, thus getting a \emph{maximum allowable $\ell_1$-norm attack}. It is easy to see that  ${\cal M}=\{x\in \mathbb{R}^n:\ \|x\|_1\leq \varepsilon\}=conv({\cal A}),$ with ${\cal A}=\{\pm\varepsilon e_i, i=1,\dots,n\}$, i.e., ${\cal M}$ is a polytope with $2n$ vertices (and then, $m=2n$). This makes the problem fitting our model~\eqref{prob}, and also gets sparsity in the final solution.
We  focus on \emph {untargeted attacks}, i.e., we aim to move
a data point away from its current class, and use the loss function proposed in~\cite{chen2017zoo}:
\begin{equation}\label{bb_loss}
f(z) = \max\{\log[F(z)]_{t_0}-\max_{i \ne t_0} \log[F(z)]_i,-\chi\},
\end{equation}
where $t_0$ is the original class, $\chi$ is a non-negative parameter and $\log 0$ is defined as~$-\infty$.
The rationale behind the use of this loss function is that,
when $\log[F(z)]_{t_0}-\max_{i \ne t_0} \log[F(z)]_i \le 0$,
the sample $z$ is not classified as the original label $t_0$, thus obtaining the desired misclassification. Moreover, the parameter $\chi$ can ensure a gap between
$\log[F(z)]_{t_0}$ and $\max_{i \ne t_0} \log[F(z)]_i$.

In our experiments related to adversarial attacks, we set $\chi=0$ for the loss function~\eqref{bb_loss}, as in~\cite{carlini2017towards,chen2017zoo},
and chose the parameter $\varepsilon$ in problem~\eqref{prob_bb_attack} by means of a parameter selection, using up to $20$ different values.
We obtained $\varepsilon$ values  in the range  $[0.0012n,0.5059n]$.
We thus solved~\eqref{prob_bb_attack} using \ORD\ (our best solver in the preliminary experiments), \LINCOA\ and \SDPEN\ (the best competitors in the preliminary experiments).
Note that an attack is successful only when the objective value is equal to $\chi$, i.e., equal to $0$ in our case. Therefore, in all the algorithms we inhibited any other stopping criterion (that we are allowed to control) and set the maximum number of objective function evaluations equal to $100(n+1)$. Moreover, we set the target objective value equal to $0$ for both \ORD\ and \LINCOA\ (this option is not available for \SDPEN). It is important to notice that for all
the successful attacks we found solutions with a number of non-zero entries smaller than $3\%$.

\subsubsection{Adversarial attacks on binary logistic regression models}
First, we performed untargeted black-box attacks on binary logistic regression models.
We used all the datasets from the LIBSVM web page (\url{https://www.csie.ntu.edu.tw/~cjlin/libsvm/}) with a number of features between $100$ and $2,000$ and a number of training samples less than $50,000$. Here is the complete list: a1a, a2a, a3a, a4a, a5a, a6a, a7a, a8a, a9a, colon-cancer, madelon, mushrooms, w1a, w2a, w3a, w4a, w5a, w6a, w7a and w8a.
%\footnote{\tobe{When
%a split between training and testing data is not available for a given dataset, we use a 90:10  split for the data points (i.e., $90\%$ of the samples in the training set and the remaining $10\%$ in the testing set).}}

We used the training set to build an $\ell_2$-regularized logistic regression model by means of the LIBLINEAR software~\cite{fan2008liblinear} (a built-in cross validation was used to choose the regularization parameter) for all the $20$ datasets.
Then, we randomly selected, for each class and each dataset, a correctly classified test sample $x_0$ and used it in problem~\eqref{prob_bb_attack}, thus getting 40 adversarial attacks.
A built-in LIBLINEAR function was used to compute the probability estimates $[F(x)]_1$ and $[F(x)]_2$ in the loss function~\eqref{bb_loss}.

In Table~\ref{tab:results_lr_attack_1}, we report,  for each solver, the percentage of successful attacks and the average CPU time (in seconds).
We further report, in Figure~\ref{subfig:plot_lr_attack_1}, the percentage of successful attacks versus the required number of simplex gradients.
We see that \ORD\ solves all the problems within a few function evaluations, while \LINCOA\ and \SDPEN\ only solve $77.50\%$ and $30.00\%$ of the problems, respectively. Moreover the CPU time for \ORD\ is always less than $1$ second and, on average, is smaller than \LINCOA\ and \SDPEN\ of $4$ and $2$ orders of magnitude, respectively.

% \begin{table}[htp]
% \centering
% \caption{Adversarial attacks on binary logistic regression models: comparison between \ORD, \LINCOA\ and \SDPEN.}
% \begin{tabular}{|c|D{.}{.}{-1}|D{.}{.}{-1}|}
% \hline
% \textbf{Algorithm} & \multicolumn{1}{c|}{\textbf{Successful attacks}} & \multicolumn{1}{c|}{\textbf{Avg CPU time (s)}}\bigstrut[t] \bigstrut[b] \\
% \hline
% \ORD & 100.00\% & 0.04 \bigstrut[t] \\
% \LINCOA & 77.50\%  & 421.78 \\
% \SDPEN & 30.00\%  & 6.80 \bigstrut[b] \\
% \hline
% \end{tabular}
% \label{tab:results_lr_attack_1}
% \end{table}

\begin{figure}[htp]
\centering
\subfloat[Binary logistic regression models]
{\includegraphics[scale=0.382, trim = 0.2cm 0cm 1cm 0.3cm, clip]{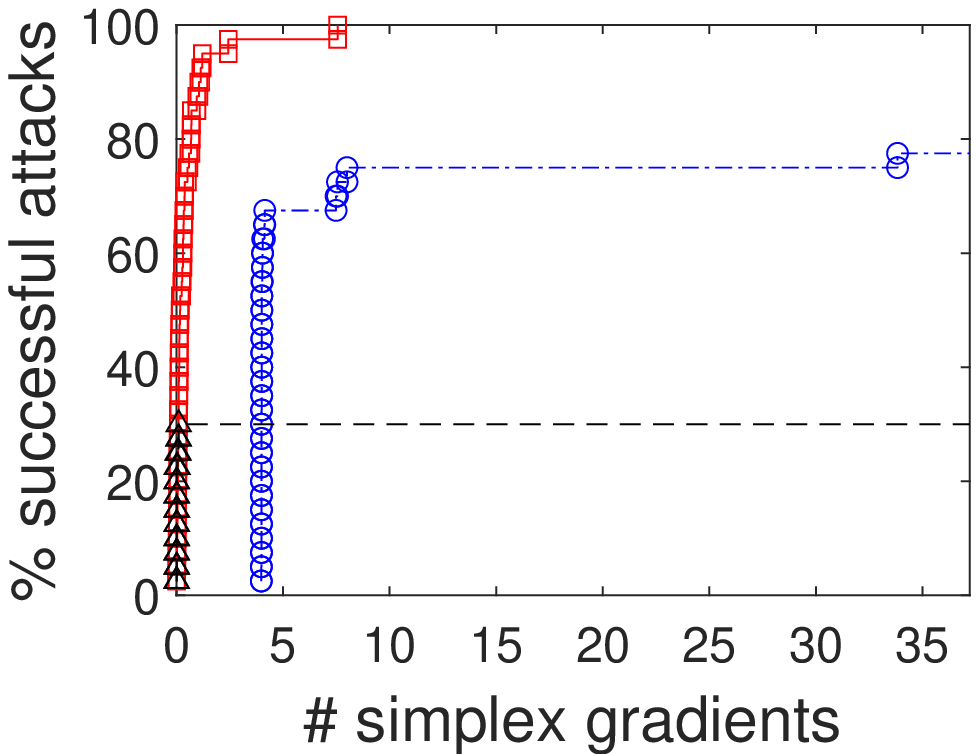}\label{subfig:plot_lr_attack_1}} \;
\subfloat[MATLAB Digits Dataset]
{\includegraphics[scale=0.382, trim = 0.2cm 0cm 1cm 0.3cm, clip]{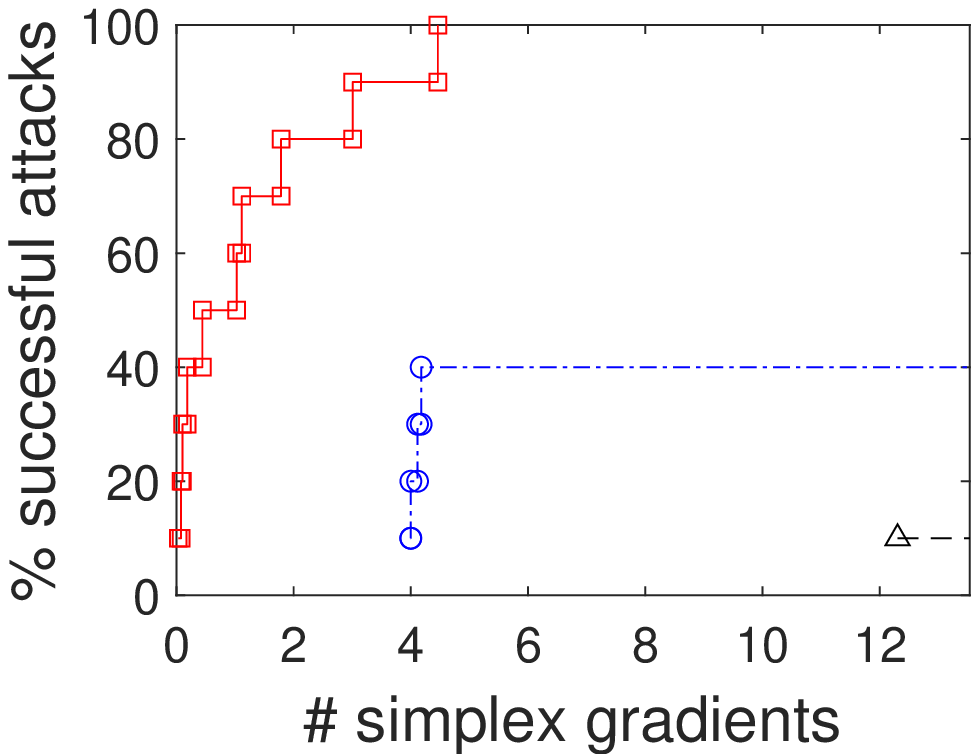}\label{subfig:plot_digits}} \;
\subfloat[Cifar-10 Dataset]
{\includegraphics[scale=0.382, trim = 0.2cm 0cm 1cm 0.3cm, clip]{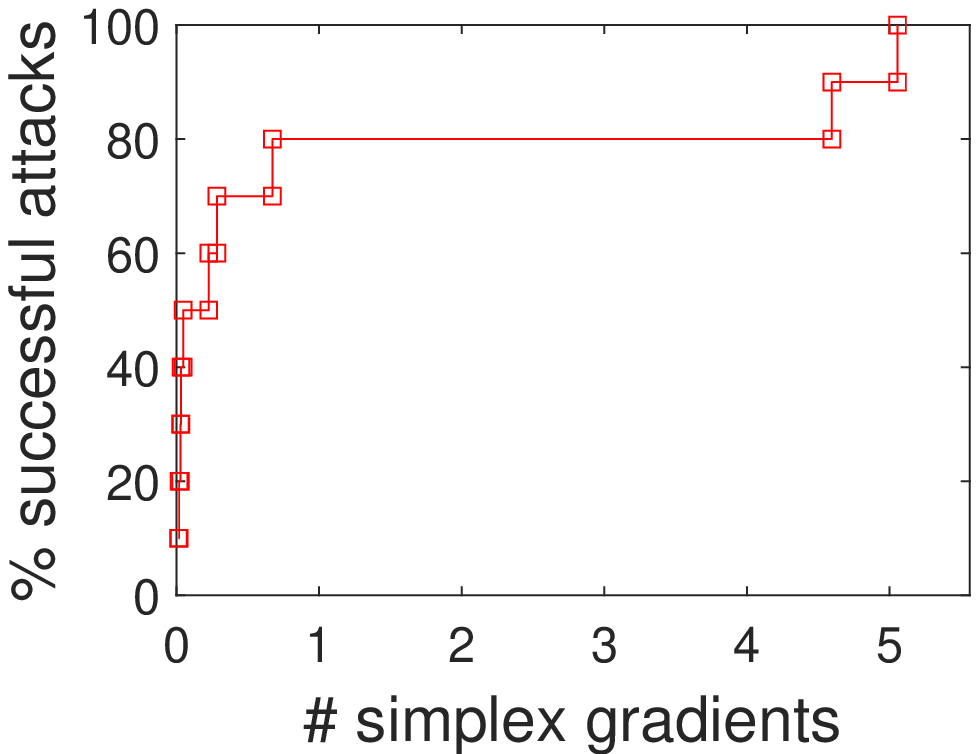}\label{subfig:plot_cifar10}} \;
\subfloat
{\includegraphics[scale=0.36, trim = 4.9cm 0cm 1.85cm 0.0cm, clip]{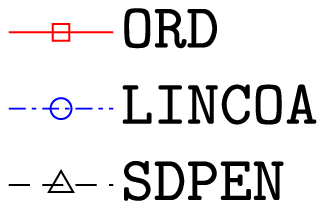}}
\caption{Adversarial attacks on binary logistic regression models (a),
on the MATLAB Digits Dataset  (b), and on the Cifar-10 Dataset (c): percentage of successful attacks vs number of simplex gradients. }
\label{fig:prova}
\end{figure}

\subsubsection{Adversarial attacks on deep neural networks}
In the second experiment, we considered  images of handwritten digits from the MATLAB Digits Dataset. This dataset has $10,000$ $28$-by-$28$ grayscale images of all digits, divided into $10$ classes of $1000$ samples each. The dataset was randomly split using a ratio $90:10$ into  training and testing set. The training set was then used to build a deep neural network with the same architecture as the one described in the examples related to deep learning networks for classification available in MATLAB (see
\texttt{\nolinkurl{https://it.mathworks.com/help/deeplearning/ug/create-simple-deep-learning-network-}}

\noindent
\texttt{for-classification.html} for further details).

We performed untargeted attacks on this deep neural network using  \ORD, \LINCOA\ and \SDPEN\  (notice that $n=784$ and $m=1568$ in this case).
For each class, we randomly selected a correctly classified sample $x_0$ from the validation set and used it in the definition of  problem~\eqref{prob_bb_attack}. Note that each pixel must be a number in the interval $[0,255]$. We hence scaled each variable in the range $[0,1]$. In this case, our formulation \eqref{prob_bb_attack} has a further set of constraints, that is $x_0+x \in [0,1]^n$. In order to get rid of those box constraints, we followed the approach described in~\cite{carlini2017towards,chen2017zoo}, and used a transformation of the form $x_i=(1+\tanh \zeta_i)/2-(x_0)_i$, with $\zeta \in\R^n$.

In Table~\ref{tab:results_dnn_attack_1},  we report the percentage of successful attacks and the average CPU time for each solver.
We further report, in Figure~\ref{subfig:plot_digits}, the percentage of successful attacks versus the required number of simplex gradients.
We see that \ORD\ gets a $100\%$ success rate with an average CPU time of around $1.5$ seconds and a small amount of simplex gradients,
while \LINCOA\ and \SDPEN\ have a success rate lower than $50\%$ and a much larger CPU time.

In Figure~\ref{subfig:digits}, we can see the images obtained with all the attacks applied by \ORD. We notice that the new images are overall very similar to the original ones,  differing, on average, in less than $0.7\%$ of the pixels.

% \begin{table}[htp]
% \centering
% \caption{Results for \ORD, \LINCOA\ and \SDPEN\ on the MATLAB Digits Dataset.}
% \begin{tabular}{|c|D{.}{.}{-1}|D{.}{.}{-1}|}
% \hline
% \textbf{Algorithm} & \multicolumn{1}{c|}{\textbf{Successful attacks}} & \multicolumn{1}{c|}{\textbf{Avg CPU time (s)}}\bigstrut[t] \bigstrut[b] \\
% \hline
% \ORD    & 100.00\% & 1.48 \bigstrut[t] \\
% \LINCOA & 40.00\%  & 1125.93 \\
% \SDPEN  & 10.00\%  & 129.69 \bigstrut[b] \\
% \hline
% \end{tabular}
% \label{tab:results_dnn_attack_1}
% \end{table}

\begin{table}[tbhp]
{\footnotesize
\captionsetup{position=top}
\caption{Adversarial attacks: performance comparison of the DFO methods}
\begin{center}
\subfloat[Binary logistic regression models]{
%\centering
%\caption{}
\begin{tabular}{|c|D{.}{.}{-1}|D{.}{.}{-1}|}
\hline
\textbf{Alg.} & \multicolumn{1}{c|}{\textbf{Success rate}} & \multicolumn{1}{c|}{\textbf{Avg time (s)}}\bigstrut[t] \bigstrut[b] \\
\hline
\ORD & 100.00\% & 0.04 \bigstrut[t] \\
\LINCOA & 77.50\%  & 421.78 \\
\SDPEN & 30.00\%  & 6.80 \bigstrut[b] \\
\hline
\end{tabular}
\label{tab:results_lr_attack_1}
} \quad
\subfloat[MATLAB Digits Dataset]{
%\centering
%\caption{Adversarial attacks on : comparison between \ORD, \LINCOA\ and \SDPEN.}
\begin{tabular}{|c|D{.}{.}{-1}|D{.}{.}{-1}|}
\hline
\textbf{Alg.} & \multicolumn{1}{c|}{\textbf{Success rate}} & \multicolumn{1}{c|}{\textbf{Avg time (s)}}\bigstrut[t] \bigstrut[b] \\
\hline
\ORD    & 100.00\% & 1.48 \bigstrut[t] \\
\LINCOA & 40.00\%  & 1125.93 \\
\SDPEN  & 10.00\%  & 129.69 \bigstrut[b] \\
\hline
\end{tabular}
\label{tab:results_dnn_attack_1}
}
\end{center}
}
\end{table}

\begin{figure}[htp]
\centering
\subfloat[MATLAB Digits Dataset]
{\includegraphics[scale=0.65, trim = 1.4cm 2.6cm 4.2cm 0.7cm, clip]{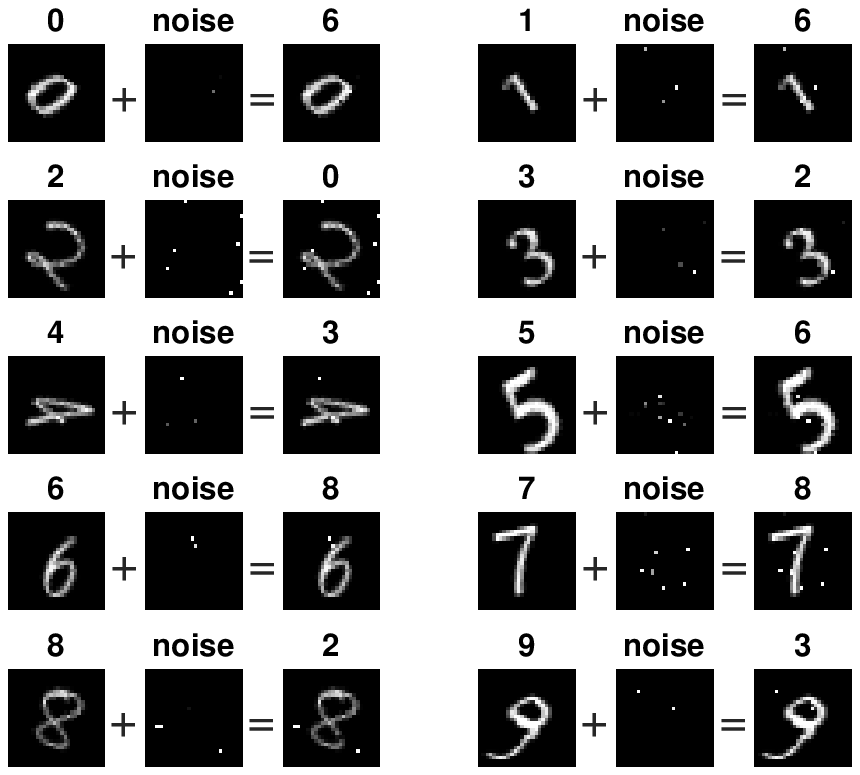}\label{subfig:digits}} \qquad \;
\subfloat[Cifar-10 Dataset]
{\includegraphics[scale=0.65, trim = 1.4cm 2.6cm 4.2cm 0.7cm, clip]{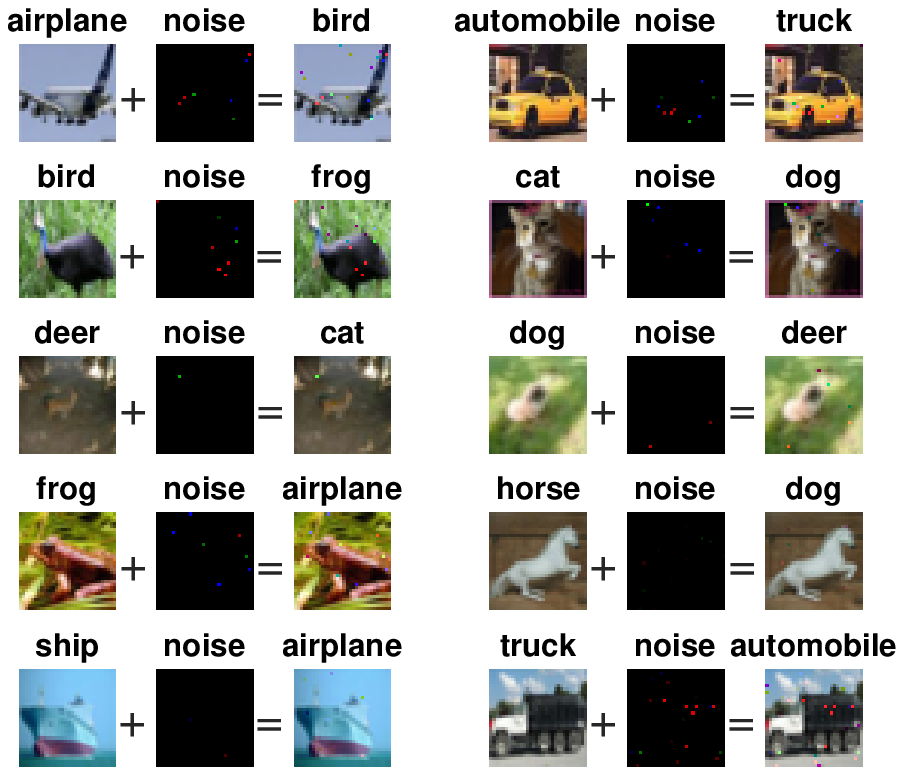}\label{subfig:cifar10}}
\caption{Adversarial attacks applied by \ORD\ on the MATLAB Digits Dataset (a) and on the Cifar-10 Dataset (b).
In each triple, we have on the left the original image with the correct label,
on the right the new image with the misclassified label
and in the middle the additive noise.}
\label{fig:digits}
\end{figure}

Finally, we considered the Cifar-10 dataset~\cite{krizhevsky2009learning} and the trained network described in the MATLAB examples related to the training of residual networks for image classification
(for details see  \url{https://it.mathworks.com/help/deeplearning/ug/train-residual-network-for-image-classification.html}).
The dataset contains $50,000$ samples in the training set and $10,000$ samples in the validation set, where each image is $32$-by-$32$ with $3$ color channels (thus getting  $n=3072$ and $m=6144$).

We performed untargeted adversarial attacks on this deep neural network only using \ORD\ (due to the large dimension of the problems, \LINCOA\ and \SDPEN\, do not give
good results in terms of success rate and/or CPU time). We used the same procedure as the one used for the attacks on the MATLAB Digits Dataset.

We report, in Figure~\ref{subfig:plot_cifar10}, the percentage of successful attacks versus the required number of simplex gradients.
We can see  that \ORD\ achieves $100\%$ success rate (with an average CPU time of slightly more than $30$ seconds) using a few simplex gradients. In Figure~\ref{subfig:cifar10}, we can see the images obtained with all the attacks applied by \ORD.
They are quite similar to the original ones, differing in about $0.2\%$ of the pixels on average.

\section*{Acknowledgments}
The authors would like to thank the two anonymous reviewers for their comments and suggestions that helped to improve the paper.

% \begin{table}[htp]
% \centering
% \caption{Results for \ORD\ on the Cifar-10 Dataset.}
% \begin{tabular}{|c|D{.}{.}{-1}|D{.}{.}{-1}|}
% \hline
% \textbf{Algorithm} & \multicolumn{1}{c|}{\textbf{Successful attacks}} & \multicolumn{1}{c|}{\textbf{Avg CPU time (s)}}\bigstrut[t] \bigstrut[b] \\
% \hline
% \ORD & 100.00\% & 30.99 \bigstrut[t] \bigstrut[b] \\
% \hline
% \end{tabular}
% \label{tab:results_dnn_attack_2}
% \end{table}

\FloatBarrier

\bibliographystyle{siamplain}
\bibliography{references}
\end{document}